\documentclass[11pt]{amsart}

\usepackage{mathpple}
\parskip=5pt

\linespread{1.2}
\textwidth=13.5cm  \oddsidemargin=1cm \evensidemargin=1cm
\setlength{\headsep}{20pt}

\usepackage{amsmath,amsfonts}
\usepackage{amsrefs}
\usepackage{amsthm}
\usepackage[all]{xy}

\usepackage{hyperref}

\numberwithin{equation}{section}

\newcommand{\op}{\operatorname}
\newcommand{\C}{\mathbb{C}}

\newcommand{\R}{\mathbb{R}}
\newcommand{\Q}{\mathbb{Q}}
\newcommand{\Z}{\mathbb{Z}}
\newcommand{\Etau}{{\text{E}_\tau}}
\newcommand{\E}{{\mathcal E}}
\newcommand{\F}{\mathbf{F}}
\newcommand{\G}{\mathbf{G}}
\newcommand{\eps}{\epsilon}
\newcommand{\g}{\mathbf{g}}
\newcommand{\im}{\op{im}}

\newcommand{\abracket}[1]{\left\langle#1\right\rangle}
\newcommand{\bbracket}[1]{\left[#1\right]}
\newcommand{\fbracket}[1]{\left\{#1\right\}}
\newcommand{\bracket}[1]{\left(#1\right)}

\newcommand{\mc}{\mathcal}
\newcommand{\cinfty}{C^{\infty}}
\newcommand{\pa}{\partial}

\renewcommand{\dbar}{\bar\pa}
\newcommand{\OO}{{\mathcal O}}
\newcommand{\hotimes}{\hat\otimes}
\newcommand{\BV}{Batalin-Vilkovisky }
\newcommand{\CE}{Chevalley-Eilenberg }
\newcommand{\suml}{\sum\limits}
\newcommand{\prodl}{\prod\limits}
\newcommand{\into}{\hookrightarrow}
\newcommand{\Ol}{\mathcal O_{loc}}

\newcommand{\iso}{\cong}
\newcommand{\dpa}[1]{{\pa\over \pa #1}}
\newcommand{\PP}{\mathrm{P}}
\newcommand{\Kahler}{K\"{a}hler }

\renewcommand{\Im}{\op{Im}}
\renewcommand{\Re}{\op{Re}}

\DeclareMathOperator{\Sym}{Sym}
\DeclareMathOperator{\Hom}{Hom}

\DeclareMathOperator{\Tr}{Tr}

\DeclareMathOperator{\PV}{PV}
\DeclareMathOperator{\Der}{Der}
\DeclareMathOperator{\HW}{HW}
\DeclareMathOperator{\Eu}{Eu}
\DeclareMathOperator{\HH}{H}

\theoremstyle{plain}
\newtheorem{thm}{Theorem}[section]
\newtheorem{thm-defn}{Theorem/Definition}[section]
\newtheorem{lem}{Lemma}[section]
\newtheorem{lem-defn}{Lemma/Definition}[section]
\newtheorem{prop}{Proposition}[section]
\newtheorem{cor}{Corollary}[section]

\theoremstyle{definition}
\newtheorem{defn}{Definition}[section]

\theoremstyle{remark}
\newtheorem{rmk}{Remark}[section]

\allowdisplaybreaks[4]  

\begin{document}

 \title{{BCOV theory on the elliptic curve and higher genus mirror symmetry}}
  \author{Si Li}
  \date{}

  \maketitle


\begin{abstract} We develop the quantum Kodaira-Spencer theory on the elliptic curve and establish the corresponding higher genus B-model. We show that the partition functions of the higher genus B-model on the elliptic curve are almost holomorphic modular forms, which can be identified with partition functions of descendant Gromov-Witten invariants on the mirror elliptic curve. This gives the first compact Calabi-Yau example where mirror symmetry is established at all genera.
\end{abstract}

  \tableofcontents


\section{Introduction}
Mirror symmetry originated from string theory as a duality between
superconformal field theories (SCFT). The natural geometric
background involved is the Calabi-Yau manifold, and SCFTs can
be realized from twisting $\sigma$-models on Calabi-Yau manifolds in
two different ways \cite{Witten-Sigma-Model, Witten-Mirror}: the A-model and the B-model. The physics
statement of mirror symmetry says that the A-model on a Calabi-Yau
manifold $X$ is equivalent to the B-model on a different Calabi-Yau
manifold $\breve{X}$, which is called the \emph{mirror}.

The mathematical interests on mirror symmetry started from the work
\cite{Candelas-mirror}, where a remarkable mathematical prediction
was extracted from the physics statement of mirror symmetry:
the counting of rational curves on the Quintic 3-fold could be computed from the period integrals on the mirror Quintic 3-fold. Motivated by
this example, it's believed that such phenomenon holds for
general mirror Calabi-Yau manifolds. The counting of rational curves is
refered to as the genus $0$ A-model, which has now been
mathematically established \cite{Ruan-Tian, Li-Tian} as \emph{Gromov-Witten theory}. The
period integral is related to the \emph{variation of Hodge
structure}, and this is refered to as the genus $0$ B-model. Mirror
conjecture at genus $0$ has been proved by Givental
\cite{Givental-mirror} and Lian-Liu-Yau \cite{LLY} for a large class
of Calabi-Yau manifolds inside toric varieties.

A fundamental mathematical question is to understand mirror symmetry at higher genus. In the A-model,
the Gromov-Witten theory has been established for curves of arbitrary genus, and the problem of  counting higher genus  curves on Calabi-Yau manifolds has a solid mathematical foundation. However, the higher genus B-model is much more mysterious. One mathematical approach to the higher genus B-model by Costello \cite{Costello-partition} is categorical, from the viewpoint of Kontsevich's homological mirror symmetry \cite{Kontsevich-HMS}. The B-model partition function is proposed through the Calabi-Yau A-infinity category of coherent sheaves and a classification of certain 2-dimensional topological field theories. Unfortunately, the computation from categorical aspects is extremely difficult.

In the physics literature, Bershadsky-Cecotti-Ooguri-Vafa proposed a closed string field theory interpretation of the B-model, and suggested that the B-model partition function could be constructed from a quantum field theory, which is called the Kodaira-Spencer gauge theory of gravity  \cite{BCOV}. The solution space of the classical equations of motion in Kodaira-Spencer gauge theory describes the deformation space of complex structures on the underlying Calabi-Yau manifold, from which we can recover the well-known geometry of the genus 0 B-model.  We will call this quantum field theory as BCOV theory. Based on such idea, a non-trivial prediction has been made in \cite{BCOV}, which says that the genus one partition function  in the B-model on Calabi-Yau 3-fold is given by certain holomorphic Ray-Singer torsion and it could be identified with the genus one Gromov-Witten invariants on the mirror Calabi-Yau 3-fold. This is recently partially confirmed by Zinger  \cite{Zinger-BCOV}.

The original BCOV theory was defined only for Calabi-Yau 3-folds. In our previous work \cite{Si-Kevin}, a variant of the classical BCOV theory has been proposed which works for Calabi-Yau manifolds of any dimension and also contains the gravitational descendants. We initiated in \cite{Si-Kevin} a mathematical analysis of the quantum geometry of perturbative BCOV theory based on the effective renormalization method developed in \cite{Costello-book}, and explored various relations between the quantization and the higher genus B-model. As an example, a canonical quantization of BCOV theory has been constructed on elliptic curves in \cite{Si-Kevin,open-closed-BCOV} which satisfies string equation and dilaton equation mirror to the Gromov-Witten theory.

In the current paper, we will focus on the quantum BCOV theory on the elliptic curve and establish its equivalence with Gromov-Witten theory on the mirror.
We will give a brief description of the main results in this introduction.

\subsection{The A-model and Gromov-Witten theory}\label{Intro-A-model}
Let $X$ be a smooth projective algebraic variety with complexified \Kahler form $\omega_X$, where $\Re \omega_X$ is a \Kahler form, and $\op{Im} \omega_X\in \HH^2(X, \R)/\HH^2(X, \Z)$. The Gromov-Witten theory on $X$ concerns the moduli space
$$
  \overline{M}_{g,n,\beta}(X)
$$
parametrizing Kontsevich's stable maps \cite{Kontsevich-Torus} $f$ from connected, genus $g$, nodal curve $C$ to $X$, with $n$ distinct smooth marked points, such that
$$
     f_*[C]=\beta\in \HH_2(X, \Z)
$$
This moduli space is equipped with evaluation maps
\begin{eqnarray*}
   ev_i: \overline{M}_{g,n,\beta}(X)&\to& X\\
      \bbracket{f, \bracket{C;p_1,\cdots, p_n}} &\to&  ev_i\bracket{\bbracket{f, \bracket{C;p_1,\cdots, p_n}} }=f(p_i)
\end{eqnarray*}
The cotangent line to the $i^{th}$ marked point is a line bundle on $\overline{M}_{g,n,\beta}(X)$, whose first Chern class will be denoted by $\psi_i\in \HH^2\bracket{\overline{M}_{g,n,\beta}(X)}$. The Gromov-Witten invariants of $X$ are defined by
\begin{eqnarray*}
         \abracket{-}: \Sym^n_{\C}\bracket{\HH^*(X)[[t]]}&\to& \C\\
             \abracket{t^{k_1}\alpha_1,\cdots, t^{k_n}\alpha_n}_{g,n,\beta}^X&=&\int_{\bbracket{\overline{M}_{g,n,\beta}(X)}^{vir}} \psi_1^{k_1}ev_1^*\alpha_1\cdots \psi_n^{k_n}ev_n^*\alpha_n
\end{eqnarray*}
where $\bbracket{\overline{M}_{g,n,\beta}(X)}^{vir}$ is the virtual fundamental class \cite{Li-Tian, Intrinsic-normal-cone} of $\overline{M}_{g,n,\beta}(X)$, which is a homology class of dimension
\begin{eqnarray}\label{moduli-dim}
        \bracket{3-\dim X}(2g-2)+2\int_\beta c_1(X)+2n
\end{eqnarray}

In the Calabi-Yau case, the dimension of the virtual fundamental class doesn't depend on $\beta$ since $c_1(X)=0$.

\begin{defn}\label{A-model correlation}
The genus $g$ A-model partition function $\F^A_{g,n,X;q}[-]$ with $n$ inputs is defined to be the multi-linear map
\begin{eqnarray*}
          \F^A_{g,n,X;q}: \Sym^n_{\C}\bracket{\HH^*(X, \C)[[t]]}&\to& \C\\
              \F^A_{g,n,X;q}\bbracket{t^{k_1}\alpha_1,\cdots, t^{k_n}\alpha_n}&=&\sum_{\beta\in \HH_2(X, \Z)}q^{\int_\beta \omega_X}\abracket{t^{k_1}\alpha_1,\cdots, t^{k_n}\alpha_n}_{g,n,\beta}^X
\end{eqnarray*}
where  $q$ is a formal variable.
\end{defn}
The A-model partition function satisfies the following basic properties
\begin{enumerate}
\item Degree Axiom. $ \F^A_{g,n,X;q}\bbracket{t^{k_1}\alpha_1,\cdots, t^{k_n}\alpha_n}$ is non-zero only for
$$
      \suml_{i=1}^n \bracket{\deg \alpha_i+2k_i}=(2g-2)\bracket{3-\dim X}+2n
$$
Moreover, we have the Hodge decomposition $\HH^n(X, \C)=\bigoplus\limits_{p+q=n}\HH^{p,q}$. If we define the Hodge weight of $t^k\alpha\in t^k\HH^{p,q}$ by $\HW(t^k\alpha)=k+p-1$, then the reality condition implies  the Hodge weight condition
$$
   \suml_{i=1}^n \HW(\alpha_i)=(g-1)\bracket{3-\dim X}
$$

\item String equation. $\F^A_{g,n,X;q}$ satisfies the string equation
$$
     \F^A_{g,n+1,X;q}\bbracket{1,t^{k_1}\alpha_1,\cdots,t^{k_n}\alpha_n}=\suml_{i=1}^n\F^A_{g,n,X;q}\bbracket{t^{k_1}\alpha_1,\cdots,t^{k_i-1}\alpha_i,\cdots t^{k_n}\alpha_n}
     $$
for $2g-2+n>0$.
\item Dilaton equation. $\F^A_{g,n,X;q}$ satisfies the dilaton equation
$$
         \F^A_{g,n,X;q}\bbracket{t,t^{k_1}\alpha_1,\cdots,t^{k_n}\alpha_n}=(2g-2+n)\F^A_{g,n,X;q}\bbracket{t^{k_1}\alpha_1,\cdots,t^{k_n}\alpha_n}
$$
\end{enumerate}
The parameter $q$ can be viewed as the \Kahler moduli. Since the Gromov-Witten invariants are invariant under complex deformations, $\F^A_{g,n,X;q}$ only depends on the \Kahler moduli. This is the special property characterizing the A-model.

A special role is played by Calabi-Yau 3-folds where the original mirror symmetry is established. In the case of dimension 3,
$$
\dim \bbracket{\overline{M}_{g,n,\beta}(X)}^{vir}=2n
$$

\begin{defn}The Yukawa coupling in the A-model is defined to be the genus 0 3-point correlation function
\begin{eqnarray*}
   {\HH^*(X)}^{\otimes 3}&\to& \C\\
    \alpha\otimes \beta\otimes \gamma &\to& \F^A_{0,3,X;q}\bbracket{\alpha,\beta,\gamma}
\end{eqnarray*}
\end{defn}
If $\beta=0$, we know that the Gromov-Witten invariants are reduced to the classical intersection product
$$
      \abracket{\alpha,\beta,\gamma}_{0,3, \beta=0}=\int_X \alpha\wedge\beta\wedge\gamma
$$
Therefore, the A-model Yukawa coupling
$$
\F^A_{0,3,X;q}\bbracket{\alpha,\beta,\gamma}=\int_X \alpha\wedge\beta\wedge\gamma+\suml_{\beta\neq 0}
q^{\int_\beta \omega_X}\int_{\bbracket{\overline{M}_{0,3,\beta}(X)}^{vir}} ev_1^*\alpha\wedge ev_2^*\beta \wedge ev_3^*\gamma
$$
can be viewed as a quantum deformation of the classical intersection product. Moreover, it gives a q-deformation of the classical ring structure of $\HH^*(X, \C)$, which is called the \emph{quantum cohomology ring}.

\subsection{The B-model and BCOV theory}\label{Intro-B-model}
The geometry of B-model concerns the moduli space of complex structures of Calabi-Yau manifolds. Let $\check{X}_{\tau}$ be a Calabi-Yau 3-fold with nowhere vanishing holomorphic volume form $\Omega_{\check{X}_\tau}$. Let $T_{\check{X}_{\tau}}$ be the holomorphic tangent bundle. Here $\tau$ parametrizes the complex structures.

\begin{defn}
The B-model Yukawa coupling is defined to be
\begin{eqnarray*}
      { \HH^*({\check{X}_{\tau}}, \wedge^*T_{\check{X}_{\tau}})}^{\otimes 3} &\to& \C\\
         \mu_1\otimes \mu_2\otimes \mu_3&\to& F^B_{0,3, \check{X}_{\tau}}\bbracket{\mu_1,\mu_2,\mu_3}= \int_{{\check{X}_{\tau}}}\bracket{\mu_1\wedge \mu_2\wedge \mu_3\vdash \Omega_{\check{X}_{\tau}}}\wedge \Omega_{\check{X}}
\end{eqnarray*}
where $\vdash$ is the natural contraction between tensors in $\wedge^*T_{\check{X}}$ and $\wedge^*T^*_{\check{X}}$.
\end{defn}

Generally speaking, string theory predicts that we should also have the B-model correlation functions
$$
       \F^{B}_{g,n, \check{X}_{\tau}}: \Sym_{\C}^n\bracket{\HH^*(\check{X}_{\tau}, \wedge^*T_{\check{X}_{\tau}})[[t]]}\to \C
$$

We briefly describe the BCOV approach to the B-model correlation functions developed in \cite{Si-Kevin}. Let $\Omega_{\check{X}_{\tau}}$ be a fixed nowhere vanishing holomorphic volume form. The existence of $\Omega_{\check{X}_{\tau}}$ is guaranteed by the Calabi-Yau condition. Let
$$
  \E_{\check{X}_{\tau}}=\PV_{\check{X}_{\tau}}^{*,*}[[t]]
$$
be the space of fields of BCOV theory, where $\PV_{\check{X}_{\tau}}^{*,*}$ is the space of polyvector fields. We define the classical BCOV action as a functional on $ \E_{\check{X}_{\tau}}$ by
$$
        S^{BCOV}=\suml_{n\geq 3}S^{BCOV}_n
$$
where
\begin{eqnarray*}
       S_n^{BCOV}: \Sym\bracket{\E_{\check{X}_{\tau}}^{\otimes n}}&\to&\C\\
       t^{k_1}\mu_1\otimes\cdots\otimes t^{k_n}\mu_n&\to& \int_{\overline{M}_{0,n}}\psi_1^{k_1}\cdots \psi_n^{k_n} \int_{\check{X}_{\tau}}\bracket{\mu_1\cdots\mu_n\vdash \Omega_{\check{X}_{\tau}}}\wedge \Omega_{\check{X}_{\tau}}
\end{eqnarray*}
where $\int_{\overline{M}_{0,n}}\psi_1^{k_1}\cdots \psi_n^{k_n}=\binom{n-3}{k_1,\cdots, k_n}$ is the $\psi$-class integration. Let
$$
     Q=\dbar-t\pa:  \E_{\check{X}_{\tau}}\to  \E_{\check{X}_{\tau}}
$$
be the differential acting on polyvector fields (see Definition \ref{definition-fields}). $Q$ induces a derivation on the space of functionals on $\E_{\check{X}_{\tau}}$, which we still denote by $Q$. Let $\{-.-\}$ be the Poisson bracket on local functionals defined by equation (\ref{Poisson-bracket}). Then $S^{BCOV}$ satisfies the following \emph{classical master equation} \cite{Si-Kevin}
\begin{eqnarray}
     QS^{BCOV}+{1\over 2}\fbracket{S^{BCOV},S^{BCOV}}=0
\end{eqnarray}
The physics meaning of classical master equation is that $S^{BCOV}$ is endowed with a gauge symmetry. $S^{BCOV}$ generalizes the original Kodaira-Spencer gauge action on Calabi-Yau 3-folds \cite{BCOV} to arbitrary dimensions, and remarkably, it also includes the gravitational descendants $t$.

We would like to construct the quantization of the BCOV theory on $\check{X}_{\tau}$, which is given by a family of functionals on $\E_{\check{X}_{\tau}}$ valued in $\C[[\hbar]]$ parametrized by $L>0$
$$
     \F[L]=\suml_{g\geq 0}\hbar^g \F_g[L]
$$
satisfying the axioms of quantization (see definition \ref{axiom-quantization}).

Once we have constructed the quantization $\F[L]$, we can let $L\to \infty$. The quantum master equation at $L=\infty$ says
$$
           Q \F[\infty]=0
$$
This implies that $\F[\infty]$ induces a well-defined functional on the $Q$-cohomology of $\E_{\check{X}_{\tau}}$. We will write
$$
    \F[\infty]=\suml_{g\geq 0}\hbar^g\F_{g, \check{X}_\tau}^B
$$
Using the isomorphism
$$
        \HH^*(\E_{\check{X}_{\tau}}, Q)\iso \HH^*({\check{X}_{\tau}}, \wedge^* T_{{\check{X}_{\tau}}})[[t]]
$$
and decomposing $\F_{g, \check{X}}^B$ into number of inputs, we can define the genus $g$ B-model correlation functions by
$$
    \F_{g,n,\check{X}}^B: \Sym_\C^n\bracket{\HH^*({\check{X}_{\tau}}, \wedge^* T_{{\check{X}_{\tau}}})[[t]]}\to \C
$$

Therefore the problem of constructing higher genus B-model is reduced to the construction of the quantization $\F[L]$, which is controlled by certain $L_\infty$ algebraic structure on the space of local functionals on $\E_{\check{X}_{\tau}}$ \cite{Si-Kevin}. There's an obstruction class for constructing $\F_g[L]$ at each genus $g>0$, and it's natural is conjecture that all the obstruction classes vanish for BCOV theory. For $X$ being one-dimensional, i.e.,  the elliptic curve, this is indeed the case.

To establish mirror symmetry at higher genus, we need to compare the A-model correlation function $\F^A_{g,n,X;q}$ with the B-model correlation function $\F_{g,n,\check{X}_\tau}^B$. In general, $\F_{g,n,\check{X}_\tau}^B$ doesn't depend holomorphically on $\tau$, and there's the famous holomorphic anomalies discovered in \cite{BCOV}. It's also predicted that we should be able to make sense of the limit $\lim\limits_{\bar\tau\to \infty}\F_{g,n,\check{X}_\tau}^B$ around the large complex limit of $\check{X}_{\tau}$. The geometric interpretation of $\lim\limits_{\bar\tau\to \infty}$ is explained in \cite{Si-Kevin} via splitting Hodge filtrations from limiting monodromy filtration.

The higher genus mirror conjecture can be stated as the identification of
$$
F^A_{g,n,X;q} \longleftrightarrow \lim\limits_{\bar\tau\to \infty}\F_{g,n,\check{X}_\tau}^B
$$
under certain identification of cohomology classes
$$
        \HH^*(X,\wedge^* T^*_X) \longleftrightarrow \HH^*(\check{X}_\tau, \wedge^* T_{\check{X}_\tau})
$$
and the mirror map between \Kahler moduli and complex moduli
$$
             q \longleftrightarrow \tau
$$
\subsection{Main result}\label{main-result} Let $\check{X}_\tau=\check{E}_\tau$ be the elliptic curve $\C/\bracket{\Z\oplus \Z \tau}$, where $\tau$ lies in the upper-half plane viewed as the complex moduli of $\check{E}$.

\begin{thm}[\cite{Si-Kevin, open-closed-BCOV}] There exists a unique quantization $\F^{\check{E}_\tau}[L]$ of BCOV theory on $\check{E}_\tau$ satisfying the dilaton equation. Morever, $\F^{\check{E}_\tau}[L]$ satisfies the Virasoro equations.
\end{thm}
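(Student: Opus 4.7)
My plan is to follow the effective BV renormalization framework of \cite{Costello-book} as adapted to BCOV theory in \cite{Si-Kevin}. Starting from the classical action $S^{BCOV}$, which already satisfies the classical master equation, I would construct $\F^{\check{E}_\tau}[L]=\suml_{g\geq 0}\hbar^g\F_g[L]$ by induction on the genus $g$, requiring at each step the renormalization group flow $\F[L]=W(P_\epsilon^L,\F[\epsilon])$ and the scale-$L$ quantum master equation $(Q+\hbar\Delta_L)e^{\F[L]/\hbar}=0$. At genus zero, the definition $\F_0[L]=\lim_{\epsilon\to 0} W(P_\epsilon^L, S^{BCOV})$ makes sense because the classical interaction has no UV divergences at tree level, and the resulting $\F_0[L]$ tautologically satisfies the classical master equation.

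For the inductive step, assume $\F_{<g}[L]$ has been constructed. The failure of $\F_{<g}[L]$ to satisfy the quantum master equation at order $\hbar^g$ produces a $Q$-closed local functional $O_g$, whose class $[O_g]$ in the local $L_\infty$ cohomology of $\Ol(\E_{\check{E}_\tau})$ is the obstruction to extending to $\F_g[L]$. The heart of the argument is then the vanishing $[O_g]=0$ on the elliptic curve, which I would establish by an explicit computation of the relevant local functional cohomology: since $\check{E}_\tau$ is one-dimensional Calabi-Yau, the Dolbeault cohomology $\HH^*(\check{E}_\tau,\wedge^*T_{\check{E}_\tau})$ is very small and the Gelfand--Fuks style descent from local functionals to Dolbeault classes forces the obstruction group to vanish in the needed degree. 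The space of lifts is a torsor over a second local cohomology group, which in the elliptic case is one-dimensional and generated by the class of the dilaton functional; hence imposing the dilaton equation uniquely pins down $\F_g[L]$ at every genus, yielding both existence and uniqueness of $\F^{\check{E}_\tau}[L]$.

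For the Virasoro equations, I would encode the generators $L_n$ ($n\geq -1$) as derivations on $\E_{\check{E}_\tau}$ coupling the descendant variable $t$ to polyvector fields, in the standard Virasoro representation. For each $n$, the quantity $L_n\F^{\check{E}_\tau}[L]-(\text{expected RHS})$ is a $Q$-closed local functional whose class is a potential anomaly; it satisfies the same renormalization group flow and lives in the same local cohomology group that was shown to vanish in the obstruction analysis. The Virasoro commutation relations then propagate the vanishing from $L_{-1},L_0$ (which follow from the string and dilaton equations essentially by construction) to all $L_n$. The main obstacle throughout is the cohomological vanishing of $\HH^*(\Ol(\E_{\check{E}_\tau}),Q_{L_\infty})$ in the obstruction degrees; everything else is a formal consequence of Costello's framework, and the one-dimensionality of the elliptic curve is precisely what makes this vanishing available — in higher dimensions this cohomology is genuinely nontrivial and genuine obstructions appear.
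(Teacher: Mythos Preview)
The paper does not actually prove this theorem; it is quoted from \cite{Si-Kevin, open-closed-BCOV}, and the present paper only assembles the surrounding framework (Sections~2.2--2.6). So there is no detailed proof here to compare against line by line. That said, your outline matches the general shape of the argument in the cited references: Costello's genus-by-genus obstruction theory, obstruction and deformation classes living in the cohomology of $\bracket{\Ol(\E_{\Etau}),\,Q+\fbracket{S^{BCOV},-}}$, and the dilaton axiom used to kill the one-dimensional ambiguity at each step.

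Where your sketch diverges from what the paper emphasizes is the mechanism for controlling the obstruction complex. You invoke a ``Gelfand--Fuks style descent'' and the smallness of $\HH^*(\Etau,\wedge^*T_{\Etau})$, but this by itself does not pin down the local-functional cohomology in the relevant degrees; the polyvector cohomology being small is necessary but far from sufficient. The paper's Section~2.4 (Lemma~\ref{chiral-functional}, Theorem~\ref{admissible-quantization}) shows that on the elliptic curve the translation-invariant obstruction complex is quasi-isomorphic to a \emph{chiral} complex built from $\g^{hol}_{\E_\Etau}=\C[[z,\pa_z,t]][1]$ and holomorphic derivatives only, and that the deformed bracket $\fbracket{-,-}^\prime$ preserves admissible functionals. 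This chirality reduction is the real reason the obstruction computation becomes tractable in dimension one, and it is the specific ingredient your proposal is missing. Your approach would need to make the vanishing claim precise, and the route the paper (and \cite{Si-Kevin}) takes is exactly this holomorphic reduction.

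For the Virasoro part, the paper defines explicit scale-dependent operators $\mathcal L_m[L],\mathcal D_m[L]$ (Section~2.6) compatible with renormalization group flow and $Q+\hbar\Delta_L$, and then again cites \cite{Si-Kevin} for the proof. Your idea of treating each $L_n$-anomaly as a class in the same obstruction complex and propagating from $L_{-1},L_0$ via the bracket relations is reasonable and is essentially how such arguments go, but note that $L_{-1}$ (the string equation) is itself nontrivial in this framework and is established separately in \cite{Si-Kevin}, not ``essentially by construction'' as you suggest.
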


Since we know that the A-model Gromov-Witten invariants on the elliptic curve  also satisfies the Virasoro equations \cite{virasoro}, the proof of mirror symmetry can be reduced to the stationary sectors \cite{Hurwitz}. More precisely, let $E$ the dual elliptic curve of $\check{E}$ and $\omega\in H^2(E, \Z)$ be the dual class of a point. The stationary sector of Gromov-Witten invariants are defined for descendants of $\omega$
$$
\abracket{t^{k_1}\omega, \cdots, t^{k_n}\omega}_{g,d, E}= \int_{[\overline{M}_{g,n}(E,d)]^{vir}} \prod_{i=1}^n \psi_i^{k_i}ev_i^*(\omega)
$$

On the other hand, we let $\check{\omega}\in \HH^1(\check{E}_\tau, T_{\check{E}_\tau})$ be the class such that
$\Tr (\check{\omega})=1$. The main theorem in this paper is

\begin{thm} For any genus $g\geq 0$, $n>0$, and non-negative integers $k_1, \cdots, k_n$,
\begin{enumerate}
\item $\F^{\check{E}_\tau}_g[\infty][t^{k_1}\check{\omega},\cdots, t^{k_n}\check{\omega}]$ is an almost holomorphic modular form of weight
$$
\suml_{i=1}^n(k_i+2)=2g-2+2n
$$
It follows that the limit  $\lim\limits_{\bar\tau\to \infty}\F^{\check{E}_\tau}_g[\infty][t^{k_1}\check\omega,\cdots, t^{k_n}\check\omega]
$ makes sense and is a quasi-modular form of the same weight.
\item The higher genus mirror symmetry holds on elliptic curves in the following sense
\begin{eqnarray*}
\sum_{d\geq 0}q^d\left\langle t^{k_1}\omega,\cdots,t^{k_n}\omega \right\rangle_{g,d, E}=\lim\limits_{\bar\tau\to \infty}\F^{E_\tau}_g[\infty][t^{k_1}\check\omega,\cdots, t^{k_n}\check\omega]
\end{eqnarray*}
under the identification $q=\exp(2\pi i\tau)$.
\end{enumerate}
\end{thm}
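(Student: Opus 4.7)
The plan is to analyze the quantization $\F^{\check{E}_\tau}[\infty]$ through its Feynman graph expansion, identify the resulting integrals as almost holomorphic modular forms, and then match with the stationary Gromov-Witten side using Virasoro constraints and uniqueness.

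First I would set up the Feynman diagrammatic computation of $\F^{\check{E}_\tau}_g[\infty][t^{k_1}\check\omega,\dots,t^{k_n}\check\omega]$. By the standard description of the $L\to\infty$ limit in effective BCOV theory, this correlator is a sum over connected genus $g$ stable graphs with $n$ external legs, the legs labelled by $t^{k_i}\check\omega$, each internal edge decorated by the propagator $\PP[\infty]$ and each vertex by a $\psi$-class integral on $\overline{M}_{0,v}$ paired against a fiber integral over $\check{E}_\tau$ of a product of polyvector fields. The main technical ingredient is to write $\PP[\infty]$ explicitly on the elliptic curve: it is (up to a zero-mode subtraction forced by the Calabi-Yau condition $\int \Omega\wedge\bar\Omega$-pairing) the Green's kernel of $[\bar\pa,\pa^*]$ acting on $\PV^{*,*}_{\check{E}_\tau}$ restricted to the non-constant modes, and it can be recognized as (a close cousin of) the Kronecker-Eisenstein series. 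Its $\tau$-dependence is that of an almost holomorphic modular form of weight $2$, with non-holomorphic part encoded by the completion $E_2^*(\tau,\bar\tau)=E_2(\tau)-\tfrac{3}{\pi\Im\tau}$.

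Given this propagator, Part (1) reduces to a graph-by-graph weight accounting. Each external insertion $t^{k_i}\check\omega$ contributes modular weight $k_i+2$ (here $+2$ comes from pairing $\check\omega$ with $\Omega_{\check{E}_\tau}^{\otimes 2}$ using the volume form of weight $1$), each propagator contributes weight $2$, and each vertex integral contributes weight $-2$ from $\int_{\check{E}_\tau}(-)\wedge \Omega\wedge\bar\Omega$. Using $V-E+L=1$ for a connected graph with first Betti number $g$, the total weight collapses to $\sum_i(k_i+2)$. $\textup{SL}(2,\Z)$-invariance is inherited from that of $\PP[\infty]$ and $\check\omega$, so each graph contribution, hence the whole sum, is an almost holomorphic modular form of weight $2g-2+2n$. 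Taking $\bar\tau\to\infty$ picks out the constant term in $1/\Im\tau$, a quasi-modular form of the same weight.

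For Part (2), I would exploit two uniqueness inputs. On the B-side, the theorem of Costello-Li quoted above guarantees that $\F^{\check{E}_\tau}[\infty]$ is the unique quantization satisfying dilaton; crucially it also satisfies the Virasoro equations. On the A-side, Okounkov-Pandharipande's theorem (cited as \cite{virasoro}) says the Gromov-Witten series of $E$ satisfies the Virasoro constraints. Combined with string and dilaton, Virasoro on an elliptic curve reduces every descendant correlator to the stationary sector. Hence it suffices to match
\begin{equation*}
\sum_{d\geq 0} q^d\,\langle t^{k_1}\omega,\dots,t^{k_n}\omega\rangle_{g,d,E} \;=\; \lim_{\bar\tau\to\infty}\F^{\check{E}_\tau}_g[\infty][t^{k_1}\check\omega,\dots,t^{k_n}\check\omega].
\end{equation*}
Both sides are quasi-modular forms of the same weight $2g-2+2n$ (the left side by Dijkgraaf--Kaneko--Zagier--Okounkov--Pandharipande, the right side by Part (1)), and both are power series in $q=e^{2\pi i\tau}$ starting at $q^0$. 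The space of quasi-modular forms of fixed weight is finite-dimensional, so the identity follows once I match sufficiently many low-order Fourier coefficients; equivalently, once I verify the initial data (e.g.\ the Yukawa coupling and a few low-genus/low-$n$ correlators) agree, after which the Virasoro recursion on both sides propagates the equality.

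The main obstacle will be Part (1): explicitly identifying the propagator $\PP[\infty]$ as an almost holomorphic modular form, handling the zero-mode subtraction carefully so that the infinite-volume limit $L\to\infty$ exists, and organizing the graph sum so that the weight bookkeeping is unambiguous. Part (2) is then essentially a formal argument modulo a finite matching computation, but it does rely on the nontrivial external input that stationary GW on $E$ is quasi-modular and satisfies Virasoro.
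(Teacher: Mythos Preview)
Your plan for Part (1) is on the right track but misses a structural point that the paper relies on essentially. You describe the Feynman vertices as ``$\psi$-class integrals on $\overline{M}_{0,v}$'', i.e.\ only the classical BCOV action $S^{BCOV}$. But the quantization $\F^{\check E_\tau}[L]$ is built from an admissible local functional $S=\sum_{g\ge 0}\hbar^g S_g$ with genuine higher-genus quantum corrections $S_g$; the paper proves (via a conformal rescaling argument, Proposition~2.3) that $S_g$ contains exactly $2g$ holomorphic derivatives. The weight count that actually works is $2E+N$ with $N=\sum_i 2g_i$ the total number of holomorphic derivatives at the vertices; combined with $1-L=n-E$ and $g=L+\sum_i g_i$ this gives $2g-2+2n$. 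Your accounting (propagators $+2$, vertices $-2$, external legs $k_i+2$) does not close without this input.

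For Part (2) there is a more serious gap. You propose to reduce to the stationary sector via Virasoro (correct), then argue that both sides are quasi-modular of the same weight and hence agree once ``sufficiently many low-order Fourier coefficients'' or ``initial data'' are matched, with ``Virasoro propagating the equality''. But the Virasoro relations only reduce arbitrary descendants \emph{to} stationary ones; they do not relate different stationary correlators to one another. So there is no recursion that propagates a finite check across all $(g,n,k_1,\dots,k_n)$, and finite-dimensionality of quasi-modular forms of fixed weight only helps one correlator at a time. The paper's argument is entirely different: it uses the Okounkov--Pandharipande fermionic formula for stationary GW invariants, applies the Boson--Fermion correspondence to rewrite it as a bosonic trace, and then recognizes this as a Feynman graph sum with the \emph{same} BCOV propagator $\PP_0^\infty(\tau,\infty)$. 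What remains is to match the vertex Lagrangians $\mc L^{(k)}$ (GW side) with $\mc J^{(k)}$ (BCOV side). The key idea is an integrable-system constraint: the quantum master equation on $\C^*$ forces the $\mc J^{(k)}$ to satisfy a ``commutativity property'' (an OPE/residue identity), and the paper shows (Lemmas~3.3--3.4) that this property together with the classical initial condition $\mc J^{(k)}_0=\alpha^{k+2}/(k+1)!$ determines $\mc J^{(k)}$ uniquely. The bosonized GW Lagrangians satisfy the same constraints (ultimately because the fermionic operators $\mc E_0(\lambda)$ commute), so $\mc J^{(k)}=\mc L^{(k+1)}$ and the correlators coincide. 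This commutativity/uniqueness mechanism is the heart of the proof and is absent from your proposal.
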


Section \ref{section-proof} is devoted to prove the above theoerem. Let me sketch the main steps here.  Frist, $\lim\limits_{\bar\tau\to \infty}\F^{E_\tau}_g[\infty][t^{k_1}\check\omega,\cdots, t^{k_n}\check\omega]$ can be expressed in terms of traces of operators from chiral bosonic vertex algebra, using the chirality result proved in Section \ref{section-BCOV}.
 On the other hand,  it's shown in \cite{virasoro} that $\sum\limits_{d\geq 0}q^d\left\langle t^{k_1}\omega,\cdots,t^{k_n}\omega \right\rangle_{g,d, E}$ can be expressed in terms of traces of operators from chiral fermionic vertax algebra, which can be further expressed in terms of bosonic operators using the Boson-Fermion correspondence in the theory of lattice vertex algebra. Finally we show that the bosonic vertex operators obtained from both theories furnish an integrable system which is constrained enough to identify the B-model correlation functions with descendant Gromov-Witten invariants of the mirror.

\addtocontents{toc}{\protect\setcounter{tocdepth}{1}}
\subsection*{Acknowledgement} I'm very grateful to my advisor Shing-Tung Yau for his invaluable support and encouragement during my Ph.D. at Harvard. The current paper is part of my thesis work \cite{Li-thesis} under his supervision. I want to pay special thanks to Kevin Costello, who had kindly explained to me his work on renormalization theory. Part of the work grew out of a joint project with him and was carried out while visiting the mathematics department of Northwestern University. Moreover, I thank Cumrun Vafa for many discussions and his wonderful lectures in string theory. The curiosity of string field theory leads directly to my thesis project. And I  thank Huai-Liang Chang for discussions on deformation theory in moduli problems.
\addtocontents{toc}{\protect\setcounter{tocdepth}{2}}

\section{BCOV theory on elliptic curves}\label{section-BCOV}
\subsection{Classical BCOV theory} The classical BCOV theory on arbitrary Calabi-Yau manifolds has been introduced in \cite{Si-Kevin}. In this section we will focus on elliptic curves, and review some basic geometric settings. We refer to \cite{Si-Kevin} for more detailed discussion.

\subsubsection{Space of fields}
Let $\tau\in \mathbb H$ be in upper half plane, and $\Etau$ be the associated elliptic curve
$$
       \Etau=\C/\bracket{\Z\oplus \Z \tau}
$$
$\tau$ can be viewed as the complex moduli for elliptic curves, which will be fixed in this section. We will write $z$ for the linear coordinate, and let $\Omega$ denote the holomorphic 1-form
$$
   \Omega=dz
$$

Let $\PV_\Etau$ be the space of smooth polyvector fields
$$
           \PV_\Etau=\bigoplus_{i,j}\PV_\Etau^{i,j}, \quad \PV_\Etau^{i,j}={\Omega_\Etau^{0,j}\bracket{ \wedge^i T_{\Etau}^{1,0}}}
$$
$\Omega$ induces an isomorphism between polyvector fields and differential forms
$$
        \PV_\Etau^{i,j}\xrightarrow{\vee \Omega} \Omega_\Etau^{1-i,j}
$$
where $\vee$ is the contraction map. The natural operators $\dbar, \pa$ on differential forms define operators on polyvector fields via the above isomorphism, which we will still denote by $\dbar, \pa$
\begin{align*}
   \dbar: & \PV_\Etau^{i,j}\to \PV_{\Etau}^{i,j+1}\\
    \pa: &\PV_\Etau^{i,j}\to \PV_{\Etau}^{i-1,j}
\end{align*}
This gives $\PV_\Etau$ the structure of differential bi-graded algebra. The cohomology degree of $\PV_\Etau^{i,j}$ is $i+j$.

\begin{defn}\label{definition-fields}
The space of fields of BCOV theory on $\Etau$ is defined to be differential graded algebra
$$
     \E_\Etau=\PV_\Etau[[t]][2]
$$
where $t$ if a formal variable of cohomology degree $2$. The differential is defined by
$$
        Q=\dbar-t\pa
$$
\end{defn}

In the original BCOV approach \cite{BCOV} to Kodaira-Spencer gauge theory, the space of fields is given by the kernel of  $\pa$ and it leads to the problem of non-locality. The space of fields we choose can be viewed as the derived version of the fixed point of $\pa$.  As explained in \cite{Si-Kevin}, $t$ will also play the role of gravitational descendants. This proves to be crucial when we will establish the higher genus mirror symmetry on elliptic curves.

\subsubsection{Space of functionals}
\begin{defn}
The space of functionals $\OO(\E_{\Etau})$ on $\E_{\Etau}$ is defined to be the graded-commutative algebra
$$
   \OO(\E_\Etau)=\prodl_{n\geq 0}\OO^{(n)}(\E_\Etau)=\prodl_{n\geq 0}\Hom\bracket{\E_{\Etau}^{\hotimes n}, \C}_{S_n}
$$
where $\Hom$ denotes the space of continuous linear maps, $\hotimes$ is the completed tensor product, and the
subscript $S_n$ denotes taking $S_n$ coinvariants. Elements of
$\OO^{(n)}(\E_\Etau)$ are said to be of \emph{order} $n$. Given
$S\in \OO(\E_\Etau)$, its component of order $n$ is called
the degree $n$ \emph{Taylor coefficient}, denoted by $D_nS$. The space of derivations on $\OO(\E_\Etau)$ is defined to be
$$
     \Der\bracket{\OO(\E_\Etau)}=\prodl_{n\geq 0}\Hom\bracket{\E_\Etau^{\hotimes}, \E_\Etau}_{S_n}
$$
with the natural pairing
$$
  \Der\bracket{\OO(\E_\Etau)}\times \OO(\E_\Etau)\to \OO(\E_\Etau)
$$
\end{defn}

\begin{defn} A functional $S\in \OO(\E_{\Etau})$ is said to be \emph{local} if $S$ can be written as
$$
    S=\int_M \mathcal L
$$
where $\mathcal L$ is a density-valued poly-differential map on the space of fields, which is called the \emph{lagrangian}. The space of local functionals on $\E_{\Etau}$ is denoted by $\Ol\bracket{\E_\Etau}$.
\end{defn}

There's also a similar definition of local derivatons. We refer to \cite{Costello-book} for more careful definitions.
\\

 One important example local functional is the \emph{Trace map}
\begin{align}
     \Tr: \E_\Etau\to \C, \quad \mu=\suml_{i=0}^\infty \mu_i t^i\to \int_{E\tau} \bracket{\mu_0\vee\Omega}\wedge \Omega
\end{align}
It allows us to turn a local functional $S$ into a derivation $W_S$ by the formula
\begin{align}
       S\bracket{\mu_1,\cdots,\mu_{n-1}, \alpha}\equiv \Tr\bracket{W_S\bracket{\mu_1,\cdots,\mu_{n-1}}\alpha}
\end{align}
for any $\mu_i\in \E_\Etau, \alpha\in \PV_\Etau$.

\begin{defn}
The \emph{Hamiltonian vector field} $V_S$ of a local functional $S$ is defined to be the derivation given by the composite of $W_S$ with $\pa$
\begin{align*}
   \xymatrix{
   \prodl_{n\geq 0}\bracket{\E_\Etau}^{\hotimes}\ar[r]^{W_S}\ar[rd]_{V_S}& \PV_\Etau\ar[d]^{\pa}\\
     &  \PV_\Etau
   }
\end{align*}
We define a \emph{Poisson bracket} on $\Ol\bracket{\E_\Etau}$ by the paring:
\begin{align}\label{Poisson-bracket}
\begin{split}
      \fbracket{,}: \Ol\bracket{\E_\Etau}\otimes \Ol\bracket{\E_\Etau} &\to \Ol\bracket{\E_\Etau}\\
                S_1\otimes S_2 &\to \fbracket{S_1, S_2}=V_{S_1}(S_2)
\end{split}
\end{align}
\end{defn}

In the case of elliptic curves, the Poisson bracket $\fbracket{,}$ has cohomology degree $-3$. Note that in \cite{Si-Kevin}, the same Poisson bracket is constructed via a symplectic formulation following Givental and Coates' work on the A-model and Barannikov's work on the B-model.

\subsubsection{Classical BCOV action}
\begin{defn}
The \emph{classical BCOV action functional} $S^{BCOV}\in \Ol(\E_\Etau)$ is
defined by the Taylor coefficients
$$
         D_n S^{BCOV}\bracket{t^{k_1}\alpha_1,\cdots,
t^{k_n}\alpha_n}=\begin{cases} \abracket{\tau_{k_1}\cdots
\tau_{k_n}}_0
\Tr\bracket{\alpha_1\cdots\alpha_n} & \mbox{if}\ n\geq 3\\
0 & \mbox{if}\ n<3
\end{cases}
$$
where
$$
\abracket{\tau_{k_1}\cdots
\tau_{k_n}}_0=\int_{\overline{M}_{0,n}}\psi_1^{k_1}\cdots
\psi_n^{k_n}=\binom{n-3}{k_1,\cdots, k_n}
$$
is the intersection number of $\psi$-classes on the moduli space of pointed stable rational curves.
\end{defn}

It's shown in \cite{Si-Kevin} that $S^{BCOV}$ satisfies the \emph{classical master equation}
\begin{align}
      QS^{BCOV}+{1\over 2}\fbracket{S^{BCOV}, S^{BCOV}}=0
\end{align}
It defines a dg-structure on $\Ol\bracket{\E_\Etau}$, and the complex
$$
   \bracket{\Ol\bracket{\E_\Etau}, Q+\fbracket{S^{BCOV},-}}
$$
is the \emph{deformation-obstruction complex} controlling the quantization of the classical BCOV theory \cite{Costello-book}.

\subsection{Quantization} We are interested in the perturbative quantization of classical BCOV theory from Costello's renormalization technique \cite{Costello-book}. The general  framework has been discussed in \cite{Si-Kevin} and we briefly review the geometric set-up.

\subsubsection{Regularized \BV operator}
We will fix the standard flat metric on $\Etau$. Let $\dbar^*$ be the adjoint of $\dbar$
$$
       \dbar^*: \PV_\Etau^{i,j}\to \PV_\Etau^{i,j-1}
$$
Let
$$
K_L\in
\Sym^2(\PV_\Etau), \quad L>0
$$
be the heat kernel of the Laplacian $[\dbar,\dbar^*]$, which is normalized by the following equation
$$
     e^{-L[\dbar,\dbar^*]}\alpha=P_1 \Tr(P_2 \alpha), \ \ \forall \alpha \in \PV_\Etau
$$
if we formally write $K_L=P_1\hotimes P_2$.\\

\begin{defn} The \emph{regularized BV operator}
$$
  \Delta_L: \OO^{(n+2)}(\E_\Etau)\to \OO^{(n)}(\E_\Etau)
$$
is defined to be the operator of contracting with the kernel
$$
      \bracket{\pa\otimes 1}K_L
$$
\end{defn}

Using $\Delta_L$, we can define the \emph{regularized \BV bracket}  by
\begin{align}
   \fbracket{S_1, S_2}_L\equiv\Delta_L\bracket{S_1S_2}-\bracket{\Delta_L
S_1}S_2-(-1)^{|S_1|}S_1 \Delta_L S_2
\end{align}
for any $S_1, S_2\in \OO(\E_\Etau)$. It's easy to see that if $S_1, S_2\in \Ol(\E_\Etau)$, then the classical Poisson bracket is recovered from the limit
$$
   \fbracket{S_1, S_2}=\lim_{L\to 0}\fbracket{S_1, S_2}_L
$$

\subsubsection{Regularized propagator}
The propagator of our BCOV theory is defined to be the kernel
\begin{align}
   \PP_\epsilon^L=\int_\epsilon^L \bracket{\dbar^*\pa\otimes 1} K_u du
\end{align}
Let
$$
   \pa_{P_\epsilon^L}: \OO(\Etau)\to \OO(\Etau)
$$
be the operator corresponding to contracting with $P_\epsilon^L$. We have
\begin{align}
   \bbracket{Q, \pa_{P_\epsilon^L}}=\Delta_\epsilon-\Delta_L
\end{align}
Thus, $\pa_{P_\epsilon^L}$ gives a homotopy between the operators $\Delta_L$ and $\Delta_\epsilon$.

\subsubsection{Axioms of quantization}

\begin{defn}[\cite{Si-Kevin}]\label{axiom-quantization}
A quantization of the BCOV theory on $\Etau$ is given by a family of functionals
$$
\F[L] = \suml_{g\geq 0} \hbar^g \F_g[L] \in \OO (\E_\Etau)[[\hbar]]
$$
for each $L \in \R_{> 0}$, which is at least cubic modulo $\hbar$, satisfying
\begin{enumerate}
\item  The renormalization group flow equation
\begin{eqnarray*}
\F[L] = W\bracket{\PP_\eps^L, \F[\epsilon]}
\end{eqnarray*}
for all $L, \eps>0$.
\item The quantum master equation
\begin{eqnarray*}
Q \F[L] + \hbar \Delta_L \F[L] + \frac{1}{2} \{\F[L],\F[L]\}_L = 0,\ \ \forall L>0
\end{eqnarray*}
or equivalently
$$
   \bracket{Q+\hbar \Delta_L}e^{\F[L]/\hbar}=0
$$
\item The locality axiom, as in \cite{Costello-book}. This says that $\F[L]$ has a small $L$ asymptotic expansion in terms of local functionals.
\item The classical limit condition
$$
  \lim\limits_{L\to 0}\F_0[L]=S^{BCOV}
$$

\item Degree axiom. The functional $\F_g$ is of cohomological degree
$$
2(2-2g)
$$

\item
We will give $\E_\Etau$ an additional grading, which we call Hodge weight,  by saying that elements in
$$t^m \Omega^{0,*} (\wedge^k T_{\Etau}) = \PV^{k,*}(\Etau)$$ have Hodge weight $k + m-1$.   We will let $\HW(\alpha)$ denote the Hodge weight of an element $\alpha \in \E_\Etau$. Then, the functional $\F_g$ must be of Hodge weight
$$
2-2g
$$

\end{enumerate}
\end{defn}

All the above properties of $\F[L]$ are motivated by mirror symmetry and modeled on the corresponding Gromov-Witten theory from the A-model.

\subsubsection{Dilaton equation and string equation}
We can also couple the BCOV theory with dilaton equation and string equation. But we will only do this in the homotopic sense \cite{Si-Kevin}. Let
$$
   \Eu: \OO(\E_\Etau)\to \OO(\E_\Etau)
$$
be the Euler vector field, defined by
$$
   \Eu \Phi=n\Phi
$$
if $\Phi\in \OO^{(n)}(\E_\Etau)$ is of order n.  Let $t\cdot 1\in t\PV_{\Etau}^{0,0}$ be the constant polyvector field with coefficient $t$. It naturally associates a derivation ${\pa\over \pa \bracket{t\cdot 1}}\in \Der(\E_\Etau)$.  We define the dilaton vector field $\pa_{Dil}$ by
$$
   \pa_{Dil}={\pa\over \pa \bracket{t\cdot 1}}-\Eu
$$

\begin{defn}[Dilaton axiom]
A quantization of the BCOV theory $\fbracket{\F[L]}$ satisfies the \emph{dilaton equation} if there exists a family of functionals $\G[L]\in \hbar \OO(\E_\Etau)[[\hbar]]$ such that
$$
 \bracket{Q+\hbar\Delta_L+\delta\bracket{\pa_{Dil}-2\hbar{\pa\over \pa \hbar}}}e^{F[L]/\hbar+\delta\G[L]/\hbar}=0
$$
where $\delta$ is a formal variable of cohomology degree zero with $\delta^2=0$. Moreover, we require the following renormalization group flow equation
$$
   \F[L]+\delta \G[L]=W\bracket{\PP_\epsilon^L, \F[\epsilon]+\delta \G[\epsilon]}
$$

\end{defn}

The dilaton axiom can be equivalently stated by saying that the family of quantization $\F[L]+\delta \bracket{\pa_{Dil}-2\bracket{\hbar{\pa\over \pa \hbar}-1}}\F[L]$ over $\C[\delta]/\delta^2$ is homotopic to the trivial family $\F[L]$ \cite{Si-Kevin}.

\begin{thm}[\cite{Si-Kevin}]\label{theorem-quantization}
There exists a unique quantization (up to homotopy) of the BCOV theory satisfying the dilaton equation on any elliptic curve.
\end{thm}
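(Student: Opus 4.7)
The plan is to construct $\F[L]=\suml_{g\geq 0}\hbar^g \F_g[L]$ inductively in the genus $g$ via obstruction theory on the deformation-obstruction complex described above, and then to verify that imposing the dilaton axiom pins down the resulting homotopy class uniquely. At genus zero I would set
$$
\F_0[L]=\lim_{\epsilon\to 0} W\bracket{\PP_\epsilon^L, S^{BCOV}},
$$
which is automatically local and satisfies the classical master equation at every scale $L>0$ by the standard compatibility between the renormalization group flow and the BV formalism applied to the classical master equation $QS^{BCOV}+\frac{1}{2}\fbracket{S^{BCOV},S^{BCOV}}=0$.

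For the inductive step, assume $\F_{<g}[L]$ have been constructed. Expanding the quantum master equation at order $\hbar^g$ and separating off the $g_1=0$ or $g_2=0$ pieces gives
\begin{align*}
\bracket{Q+\fbracket{\F_0[L],-}_L}\F_g[L]=-\Delta_L \F_{g-1}[L]-\frac{1}{2}\suml_{\substack{g_1+g_2=g\\ 0<g_i<g}}\fbracket{\F_{g_1}[L],\F_{g_2}[L]}_L=:O_g[L].
\end{align*}
A direct check shows that $O_g[L]$ is closed for the differential $Q+\fbracket{\F_0[L],-}_L$, and the locality axiom together with the RG flow identity $\bbracket{Q,\pa_{\PP_\epsilon^L}}=\Delta_\epsilon-\Delta_L$ reduces the problem to the vanishing of the class of $O_g$ in $H^1$ of the local complex $\bracket{\Ol(\E_\Etau), Q+\fbracket{S^{BCOV},-}}$, in the bidegrees dictated by the degree and Hodge weight axioms of Definition \ref{axiom-quantization}.

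The main step is therefore a cohomological computation. Using a jet description of local functionals together with the triviality of $T_\Etau$ and the compactness of $\Etau$, the cohomology of the deformation-obstruction complex can be related to $\HH^*(\Etau, \wedge^* T_\Etau)[[t]]$ equipped with a derived correction coming from $Q=\dbar-t\pa$. The constraints that $\F_g$ must have cohomological degree $2(2-2g)$ and Hodge weight $2-2g$, combined with the fact that $\Etau$ has one-dimensional cohomology in each $(p,q)$ type, leave the relevant obstruction space trivial for every $g\ge 1$, so $\F_g[L]$ can be chosen at every order.

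For uniqueness, the analogous $H^0$ computation leaves at each genus at most a one-parameter ambiguity in $\F_g[L]$ up to homotopy. I would then show that imposing the dilaton axiom --- that the family $\F[L]+\delta\bracket{\pa_{Dil}-2\bracket{\hbar{\pa\over \pa \hbar}-1}}\F[L]$ be homotopic to $\F[L]$ --- kills precisely this remaining freedom, since the operator $\pa_{Dil}-2\bracket{\hbar{\pa\over \pa \hbar}-1}$ acts by a non-zero scalar on the surviving $H^0$ class via the Euler grading on polyvector fields. The hard part of the whole argument is the cohomology computation with the Hodge weight grading taken into account: the interplay between the $t$-variable, the polyvector field degree, and the differential $Q=\dbar-t\pa$ is what forces all higher obstructions to vanish precisely for elliptic curves, whereas in higher dimensions the same strategy would generically encounter non-trivial classes.
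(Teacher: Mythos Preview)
The paper does not prove this theorem; it is stated with attribution to \cite{Si-Kevin} and used as input. There is therefore no proof here to compare against directly. That said, the surrounding material in Section~\ref{section-BCOV} indicates what the argument in the cited reference looks like, and it is worth measuring your outline against that.

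Your overall strategy --- inductive construction of $\F_g[L]$ with obstruction in $H^1$ of $\bracket{\Ol(\E_\Etau),\,Q+\fbracket{S^{BCOV},-}}$ and ambiguity in $H^0$ killed by the dilaton axiom --- is the correct framework and matches what the paper signals when it calls this complex the ``deformation-obstruction complex controlling the quantization''. Where your sketch is thin is exactly the step you flag as hard: the cohomology computation. The paper's own development (Lemma~\ref{chiral-functional} and the discussion around Theorem~\ref{admissible-quantization}) shows that the route taken is not a direct jet-bundle calculation but a reduction to \emph{translation-invariant chiral} local functionals: one replaces $J(\E_\Etau)^{\Etau}[-1]$ by the holomorphic sub-$L_\infty$ algebra $\g^{hol}_{\E_\Etau}=\C[[z,\pa_z,t]][1]$ and obtains the splitting
\[
\Ol(\E_\Etau)^{\Etau}\ \simeq\ \bracket{C^*_{red}(\g^{hol}_{\E_\Etau})\otimes_{D^{hol}}\C\,dz[2]}\ \oplus\ \bracket{C^*_{red}(\g^{hol}_{\E_\Etau})\otimes_{D^{hol}}\C\,dz\,d\bar z[2]}.
\]
This is the concrete mechanism that makes the obstruction computation tractable in the elliptic-curve case; your appeal to ``triviality of $T_{\Etau}$ and compactness'' is pointing in the right direction but does not by itself deliver the vanishing you assert. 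In particular, the claim that the degree/Hodge-weight constraints alone force the obstruction space to be trivial for every $g\ge 1$ is not something one can read off from $\HH^*(\Etau,\wedge^*T_\Etau)[[t]]$ without the chiral reduction above; there are a priori many translation-invariant local functionals in each bidegree, and it is the holomorphic-jet description that cuts them down. Likewise, your uniqueness paragraph is morally right, but the assertion that $\pa_{Dil}-2(\hbar\pa_\hbar-1)$ acts by a nonzero scalar on the surviving $H^0$ class requires identifying that class explicitly, which again goes through the chiral description.
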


Thus, we will use $\fbracket{\F^{\Etau}[L]}$ to denote such a quantizaton with dilaton axiom on the elliptic curve $\Etau$ without ambiguity.

Next, we discuss the homotopy version of string equation.  Let
$$
  t^{-1}\star: \E_{\Etau}\to \E_{\Etau}
$$
be the operator defined by
$$
     t^{-1}\star \bracket{t^{k}\mu}=\begin{cases} t^{k-1}\mu & \text{if}\ k>0 \\ 0 & \text{if}\ k=0 \end{cases}
$$
It induces a derivation on $\OO(\E_\Etau)$, which will be denoted by $\pa_{t^{-1}\star}$. We define another linear operator on $\E_\Etau$ depending on the scale $L$
$$
   Y[L] (t^k \mu)=\begin{cases}0 & \text{if}\ k>0 \\ \int_0^L du \dbar^*\pa e^{-u[\dbar,\dbar^*]} \mu & \text{if}\ k=0\end{cases}
$$
and use the same symbol $Y[L]$ to denote its induced derivation on $\OO(\E_\Etau)$. The string vector field depends on the scale and is defined by
$$
  \pa_{Str}[L]={\pa\over \pa (1)}-\pa_{t^{-1}\star}+Y[L]
$$
where ${\pa\over \pa (1)}$ is the derivation of contracting with the constant polyvector field $1$. Let $S^{BCOV}_3$ be the cubic Taylor coefficient of the classical BCOV functional. ${\pa\over \pa (1)}S_3^{BCOV}$ will be the quadratic local functional given by the trace pairing.

\begin{defn}[String equation]
A quantization of the BCOV theory $\fbracket{\F[L]}$ satisfies the \emph{string equation} if there exists a family of functionals $\G[L]\in \hbar \OO(\E_\Etau)[[\hbar]]$ such that
$$
 \bracket{Q+\hbar\Delta_L+\delta\bracket{\pa_{Str}[L]-{1\over \hbar}{\pa\over \pa (1)}S_3^{BCOV}}}e^{F[L]/\hbar+\delta\G[L]/\hbar}=0
$$
where $\delta$ is a formal variable of cohomology degree zero with $\delta^2=0$. Moreover, we require the following renormalization group flow equation
$$
   \F[L]+\delta \G[L]=W\bracket{\PP_\epsilon^L, \F[\epsilon]+\delta \G[\epsilon]}
$$
\end{defn}

The reason for the choice of the scale-dependent string operator is that it is compatible with the renormalization group flow equation and quantum master equation in the following sense
\begin{align*}
      &\bracket{\pa_{Str}[L]-{1\over \hbar}{\pa\over \pa (1)}S_3^{BCOV}}e^{\hbar \pa_{\PP_{\epsilon}^L}}=e^{\hbar \pa_{\PP_\epsilon^L}}\bracket{\pa_{Str}[\epsilon]-{1\over \hbar}{\pa\over \pa (1)}S_3^{BCOV}}\\
      &\bbracket{\bracket{\pa_{Str}[L]-{1\over \hbar}{\pa\over \pa (1)}S_3^{BCOV}}, Q+\hbar\Delta_L }=0
\end{align*}
At the classical level, it simply says that
$$
   \pa_{Str}[0]S^{BCOV}={\pa\over \pa (1)}S_3^{BCOV}
$$
which is the familiar string equation.

\begin{thm}[\cite{Si-Kevin}]
The quantization $\fbracket{\F^{\Etau}[L]}$ on the elliptic curve satisfies the string equation.
\end{thm}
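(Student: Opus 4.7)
The plan is to construct the family $\G[L]$ witnessing the string equation by reducing to the uniqueness of the dilaton-axiom quantization in Theorem \ref{theorem-quantization}, using the two compatibility identities for $\pa_{Str}[L]$ stated just above the theorem. First I would verify the classical string identity $\pa_{Str}[0]S^{BCOV} = \frac{\pa}{\pa (1)} S_3^{BCOV}$ directly from the definition of $S^{BCOV}$: it reduces to the combinatorial identity on $\psi$-class integrals $\int_{\overline{M}_{0,n+1}} \prod_{i=2}^{n+1}\psi_i^{k_{i-1}} = \sum_{j=1}^{n} \int_{\overline{M}_{0,n}}\prod_{i=1}^{n}\psi_i^{k_i - \delta_{ij}}$, combined with the trivial insertion property $\Tr(1\cdot \alpha_1\cdots \alpha_n) = \Tr(\alpha_1\cdots \alpha_n)$ of the trace pairing. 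Note that $Y[0] = 0$, so the scale-dependent correction in $\pa_{Str}[L]$ is absent at the classical limit.

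Next, set $\mathcal{S}[L] := \pa_{Str}[L] - \frac{1}{\hbar}\frac{\pa}{\pa (1)} S_3^{BCOV}$ and consider the deformed family $\F^{\Etau}[L] + \delta\, \mathcal{S}[L]\F^{\Etau}[L]$ over $\C[\delta]/\delta^2$. The two compatibility identities displayed before the theorem guarantee that $\mathcal{S}[L]$ intertwines the renormalization group flow and commutes with $Q + \hbar \Delta_L$; consequently the deformed family automatically satisfies the RG equation and the quantum master equation modulo $\delta^2$, and hence is itself a quantization of BCOV theory in the sense of Definition \ref{axiom-quantization}. The string equation then amounts to producing a witness $\G[L]$ trivializing this deformation, i.e.\ solving $\mathcal{S}[L]\F^{\Etau}[L] = (Q + \hbar \Delta_L)\G[L] + \{\F^{\Etau}[L], \G[L]\}_L$ order by order in $\hbar$, with the RG compatibility of $\mathcal{S}[L]$ ensuring that a solution at one scale propagates consistently to all scales.

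The main obstacle is ruling out the obstruction to this trivialization at each genus $g\geq 1$. My strategy is to show that the deformed quantization itself satisfies the dilaton axiom: since $\pa_{Str}[L]$ and the dilaton operator $\pa_{Dil} - 2\hbar \frac{\pa}{\pa \hbar}$ commute up to terms already in the image of $Q + \hbar \Delta_L$ and $\{\F^{\Etau}[L], -\}_L$, a direct computation, reducing on the classical side to the dilaton-string commutator on $\overline{M}_{0,n}$, shows that $\F^{\Etau}[L] + \delta\, \mathcal{S}[L]\F^{\Etau}[L]$ inherits a dilaton structure after a suitable correction by a dilaton-compatible term. By the uniqueness clause of Theorem \ref{theorem-quantization}, this deformation must then be homotopically trivial, and the homotopy is exactly the desired $\G[L]$. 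The delicate technical point is controlling the scale-dependent piece $Y[L]$ inside $\pa_{Str}[L]$: one uses the fact that $[Q, Y[L]]$ agrees with the heat-kernel difference controlled by the propagator $\PP_\epsilon^L$, which renders the obstruction class scale-independent and reduces its vanishing to a finite-dimensional check on the $Q$-cohomology $\HH^*(\E_\Etau, Q) \iso \HH^*(\check{E}_\tau, \wedge^* T_{\check{E}_\tau})[[t]]$, where it vanishes for degree/Hodge weight reasons on the elliptic curve.
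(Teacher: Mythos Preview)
The paper does not give a proof of this theorem; it is simply quoted from \cite{Si-Kevin} with no argument beyond the two compatibility identities displayed immediately above it. So there is no ``paper's own proof'' to compare against here.

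Your outline is the natural one implied by the surrounding text: the compatibility identities are stated precisely so that $\F^{\Etau}[L]+\delta\,\mathcal S[L]\F^{\Etau}[L]$ is again a quantization over $\C[\delta]/\delta^2$, and the string equation is then the assertion that this first-order deformation is homotopically trivial. Invoking the uniqueness clause of Theorem~\ref{theorem-quantization} to obtain the trivializing $\G[L]$ is the right mechanism, and the classical check via the genus-zero string equation on $\overline{M}_{0,n}$ is correct.

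The only place where your sketch is thin is the verification that the deformed family inherits the dilaton axiom. You assert that $\pa_{Str}[L]$ and $\pa_{Dil}-2\hbar\,\pa/\pa\hbar$ commute up to $(Q+\hbar\Delta_L+\{\F^{\Etau}[L],-\}_L)$-exact terms, but this is exactly the step that requires work: one must check that the dilaton witness for $\F^{\Etau}[L]$ can be transported through $\mathcal S[L]$, which in turn needs the commutator $[\pa_{Dil},Y[L]]$ and the interaction of $\pa_{Dil}$ with $\frac{1}{\hbar}\frac{\pa}{\pa(1)}S_3^{BCOV}$ to be controlled explicitly. This is not automatic from what is written in the present paper, and the actual argument in \cite{Si-Kevin} carries this out at the level of the obstruction complex. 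Your degree/Hodge-weight vanishing at the end is in the right spirit, but as written it is an assertion rather than a computation.
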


\subsection{Higher genus B-model} In this section we will explain the establishment of the higher genus B-model from the quantization of BCOV theory. The general idea is presented in \cite{Si-Kevin}. We will focus on elliptic curves and discuss the relevant geometry needed for higher genus mirror symmetry.

\subsubsection{B-model correlation functions} Let $\fbracket{\F^{\Etau}[L]}$ be the unique quantization on the elliptic curve $\Etau$ satisfying the dilaton equation. Since $\Etau$ is compact, the kernel $\PP_L^\infty$ is in fact a smooth kernel. Thus the renormalization group equation allows us to take the following limit
$$
    \F^{\Etau}[\infty]\equiv \sum_{g\geq 0}\hbar^g\F_g^{\Etau}[\infty]\equiv\lim_{L\to \infty}\F[L] \in \OO(\E_\Etau)[[\hbar]]
$$
On the other hand, the BV operator $\Delta_L$ is the contraction with the kernel representing the operator $\pa e^{-L[\dbar,\dbar^*]}$, which vanishes at $L=\infty$. The quantum master equation at $\infty$ implies that
$$
       Q \F^{\Etau}_g[\infty]=0, \quad \forall g\geq 0
$$
Therefore $\F_g^{\Etau}[\infty]$ descends a formal function on the cohomology $\HH^*(\E_\Etau, Q)$.

Recall that in constructing $\F^{\Etau}[L]$, we need a choice of a metric (we have chosen the standard flat metric for simplicity) as the gauge-fixing condition and construct the heat kernel.  However
\begin{lem}
The partition function $\F_g^{\Etau}[\infty]$, when viewed as a formal function on $\HH^*(\E_\Etau, Q)$, doesn't depend on the choice of the metric.
\end{lem}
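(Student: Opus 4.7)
The plan is to exhibit metric dependence as a homotopy between quantizations, and then exploit the vanishing of $\Delta_L$ at $L = \infty$. Let $\{g_s\}_{s \in [0,1]}$ be a smooth path interpolating between two chosen Hermitian gauge-fixing metrics on $\Etau$; the space of such metrics is convex, hence contractible, so such a path exists. Write $\dbar_s^*$, $K_L^s$, $\PP_\eps^{L,s}$, $\Delta_L^s$ for the associated adjoint, heat kernel, propagator, and regularized BV operator built from $g_s$, and let $\F^s[L]$ denote the unique quantization satisfying the dilaton axiom provided by Theorem \ref{theorem-quantization}.

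The key step is to differentiate the quantum master equation in $s$. This shows that $\partial_s \F^s[L]$ is a cocycle for the twisted differential $Q + \hbar \Delta_L^s + \{\F^s[L], -\}_L$, i.e.\ an infinitesimal symmetry of the quantization. Applying the deformation-obstruction argument underlying Theorem \ref{theorem-quantization} now parametrically over $[0,1]$, one obtains a primitive $\Phi^s[L] \in \hbar \OO(\E_{\Etau})[[\hbar]]$, compatible with the renormalization group flow, such that
\begin{equation*}
    \partial_s \F^s[L] \;=\; Q \Phi^s[L] + \{\F^s[L], \Phi^s[L]\}_L + \hbar \Delta_L^s \Phi^s[L].
\end{equation*}
Now let $L \to \infty$. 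As noted above, $\Delta_L \to 0$; since the bracket $\{-,-\}_L$ is built from $\Delta_L$ it also vanishes in the limit, and the displayed equation collapses to $\partial_s \F^s[\infty] = Q \Phi^s[\infty]$. Integrating in $s$ from $0$ to $1$ exhibits $\F^1[\infty] - \F^0[\infty]$ as $Q$-exact, so both partition functions induce the same formal function on $\HH^*(\E_{\Etau}, Q)$.

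The main obstacle is the production of the homotopy $\Phi^s[L]$: one must check that $\partial_s \F^s[L]$ is a coboundary in the deformation-obstruction complex $(\Ol(\E_{\Etau}), Q + \{S^{BCOV}, -\})$, with a primitive that may be chosen compatibly with both the renormalization group flow and the dilaton axiom. This is essentially the vanishing input used in the uniqueness half of Theorem \ref{theorem-quantization}, now carried out in families over $s$; in effect any infinitesimal change of gauge-fixing must be absorbable into an inner automorphism of the BV algebra, which on the elliptic curve is guaranteed by the triviality of the relevant obstruction groups at each genus.
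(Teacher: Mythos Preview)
Your overall strategy matches what the paper's citation of \cite{Si-Kevin} unpacks to: gauge-fixing independence in the BV formalism, realized as a homotopy of quantizations that collapses to $Q$-exactness at $L=\infty$. The paper itself gives no argument beyond that reference, so your sketch is already more detailed than what appears here.

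There is, however, a genuine slip in the middle step. Differentiating the quantum master equation in $s$ does \emph{not} show that $\partial_s\F^s[L]$ is a cocycle for $Q+\hbar\Delta_L^s+\{\F^s[L],-\}_L^s$: since $\Delta_L^s$ and hence the bracket themselves depend on $s$, the differentiation produces an additional inhomogeneous term proportional to $\hbar\bracket{\partial_s\Delta_L^s}e^{\F^s[L]/\hbar}$. Consequently the primitive $\Phi^s[L]$ cannot be manufactured by invoking the fixed-metric obstruction calculus behind Theorem~\ref{theorem-quantization} alone; what is needed is its \emph{family} version, in which the gauge-fixing parameter $s$ is built into the BV package from the start (this is the simplicial gauge-independence machinery of \cite{Costello-book}). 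The output of that argument is a quantization over $[0,1]$ whose $ds$-component is your $\Phi^s[L]$, but the equation it satisfies at finite $L$ carries the extra term you dropped.

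Your endgame survives this correction. For any Hermitian metric the harmonic polyvector fields are $\partial$-closed, so $\Delta_\infty^s=0$ identically in $s$ and therefore $\partial_s\Delta_\infty^s=0$ as well. At $L=\infty$ the spurious term, the BV operator, and the bracket all vanish, and the family quantum master equation does reduce to $\partial_s\F^s[\infty]=Q\Phi^s[\infty]$; integrating in $s$ then finishes the proof exactly as you wrote. So the conclusion is correct once the family construction of $\Phi^s$ is set up properly rather than deduced from the naive differentiation.
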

This is precisely the geometric meaning of quantum master equation. See \cite{Si-Kevin}*{Section 4} for a detailed proof.

However, the choice of the metric allows us to identify $\HH^*(\E_\Etau, Q)$ with the sheaf cohomology of polyvector fields. In fact, using Hodge theory, it's easy to prove the following

\begin{lem}\label{isomorphism-polyvectors} With a fixed choice of metric,  there are natural isomorphisms
$$
    \HH^*(\Etau, \wedge^* T_{\Etau})[[t]]\iso \mathbb H^*(\Etau \wedge^* T_{\Etau})[[t]]\iso \HH^*(\E_\Etau,Q)
$$
where $\HH^*(\Etau, \wedge^* T_{\Etau})$ is the sheaf cohomology of $ \wedge^* T_{\Etau}$ on $\Etau$, and $ \mathbb H^*(\Etau \wedge^* T_{\Etau})$ is the space of Harmonic polyvector fields .
\end{lem}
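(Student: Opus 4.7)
The plan is to establish the two isomorphisms in turn. For the first, contraction with the parallel holomorphic volume form $\Omega=dz$ gives a $\dbar$-linear isomorphism
$$
\vee\Omega:\PV_{\Etau}^{i,j}\xrightarrow{\sim}\Omega_{\Etau}^{1-i,j},
$$
so that $(\PV_{\Etau}^{*,*},\dbar)$ is identified with a reindexing of the Dolbeault complex of $\Etau$. The Dolbeault theorem then identifies its cohomology with $\HH^*(\Etau,\wedge^* T_{\Etau})$, and Hodge theory for the $\dbar$-Laplacian on the compact \Kahler manifold $\Etau$ identifies each sheaf cohomology class with its unique harmonic representative. Tensoring with $\C[[t]]$ yields the first isomorphism.

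For the second isomorphism I would exploit the descending $t$-adic filtration $F^p\E_{\Etau}=t^p\PV_{\Etau}[[t]]$, which is complete, exhaustive, and preserved by $Q=\dbar-t\pa$ since $t\pa$ raises the $t$-degree by one. On the associated graded, the differential is just $\dbar$, so the induced spectral sequence has
$$
E_1^{*,*}=\mathbb H^*(\Etau,\wedge^* T_{\Etau})[[t]].
$$
The induced $d_1$ comes from $-t\pa$, but $\pa$ annihilates harmonic polyvector fields. Indeed, under $\vee\Omega$ the operator $\pa$ on polyvectors corresponds to $\pa$ on forms, and on a compact \Kahler manifold the spaces of $\dbar$- and $d$-harmonic forms coincide, so a $\dbar$-harmonic form is $d$-closed of pure type and hence satisfies $\pa\alpha=\dbar\alpha=0$. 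On $\Etau$ with the flat metric one sees this even more concretely: the harmonic polyvector fields consist of constant-coefficient elements in each bidegree, visibly killed by both $\pa$ and $\dbar$. The same argument kills all higher differentials, and the spectral sequence degenerates at $E_1$.

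To package this as the stated isomorphism, observe that the inclusion $\mathbb H^*[[t]]\into\E_{\Etau}$ lands in $\ker Q$ and is therefore a chain map, and the spectral-sequence computation shows it is a quasi-isomorphism. Equivalently, one may apply the homological perturbation lemma to the standard Hodge retraction of $(\PV_{\Etau},\dbar)$ onto $(\mathbb H^*,0)$ with homotopy $\dbar^*G$ (where $G$ is the Green's operator), perturbing by $-t\pa$; because $\pa$ vanishes on the retract, the transferred differential on $\mathbb H^*[[t]]$ is identically zero. The only substantive input is the vanishing of $\pa$ on harmonic polyvector fields, which on $\Etau$ is immediate, so I do not foresee a serious obstacle.
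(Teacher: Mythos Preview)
Your argument is correct. The paper itself does not give a proof of this lemma at all, simply remarking that ``using Hodge theory, it's easy to prove the following,'' so your spectral-sequence/homological-perturbation argument is a legitimate and careful expansion of exactly the intended approach; the only input beyond standard Hodge theory is the vanishing of $\pa$ on harmonic polyvector fields, which (as you note) is immediate for the flat metric on $\Etau$.
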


\begin{defn}\label{higher-genus-B-invariants}
The genus $g$ correlation function of the quantization $\F^\Etau$ is defined to be
$$
   \abracket{t^{k_1}\mu_1,\cdots, t^{k_n}\mu_n}_{g,n}^{\F^\Etau}=\bracket{{\pa\over \pa \bracket{t^{k_1}\mu_1}}\cdots {\pa\over \pa \bracket{t^{k_n}\mu_n}}\F^{\Etau}_g[\infty]}(0)
$$
for $\mu_i\in \mathbb H^*(\Etau, \wedge^* T_{\Etau})$.
\end{defn}
This will be the candidate of higher genus B-model invariants mirror to the descendant Gromov-Witten invariants.

\subsubsection{Dilaton equation and string equation}
The quantization $\F^{\Etau}$ satisfies the homotopic dilaton equation and string equation. This will imply that the correlation functions satisfy the strict dilaton and string equations.

\begin{prop} The correlation functions $\abracket{-}_{g,n}^{\F^\Etau}$ satisfies the  dilaton equation
$$
   \abracket{t, t^{k_1}\mu_1,\cdots, t^{k_n}\mu_n}_{g,n+1}^{\F^\Etau}=(2g-2+n)\abracket{t^{k_1}\mu_1,\cdots, t^{k_n}\mu_n}_{g,n}^{\F^\Etau}
$$
and the string equation
$$
       \abracket{1, t^{k_1}\mu_1,\cdots, t^{k_n}\mu_n}_{g,n+1}^{\F^\Etau}=\sum_{i}  \abracket{t^{k_1}\mu_1,\cdots, t^{k_i-1}\mu_i, \cdots t^{k_n}\mu_n}_{g,n}^{\F^\Etau}, \quad \forall 2g+n\geq 3
$$
for any $\mu_i\in \mathbb H^*(\Etau, \wedge^* T_{\Etau})$.
\end{prop}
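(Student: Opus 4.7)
The plan is to extract the strict correlator identities from the homotopic dilaton and string master equations at $L=\infty$ by expanding to first order in the odd parameter $\delta$ and then evaluating at harmonic polyvector field inputs.

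For the dilaton axiom, observe that $\Delta_L \to 0$ as $L\to\infty$, so the axiom at $L=\infty$ reduces to $(Q + \delta(\pa_{Dil} - 2\hbar \tfrac{\pa}{\pa \hbar}))e^{\F^{\Etau}[\infty]/\hbar + \delta\G[\infty]/\hbar} = 0$. Taking the coefficient of $\delta$ and using $Qe^{\F^{\Etau}[\infty]/\hbar} = 0$ from the quantum master equation, I arrive at
\begin{equation*}
Q\G[\infty] + \pa_{Dil}\F^{\Etau}[\infty] + \bigl(2 - 2\hbar\tfrac{\pa}{\pa\hbar}\bigr)\F^{\Etau}[\infty] = 0,
\end{equation*}
which at genus $g$ reads $Q\G_g[\infty] + \pa_{Dil}\F^{\Etau}_g[\infty] + (2-2g)\F^{\Etau}_g[\infty] = 0$. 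The next step is to take $n$ derivatives and evaluate at $0$ on harmonic inputs $t^{k_1}\mu_1, \ldots, t^{k_n}\mu_n$. The crucial input is that $Q = \dbar - t\pa$ annihilates every $t^{k_i}\mu_i$, because on the elliptic curve with flat metric each harmonic generator of $\mathbb H^*(\Etau,\wedge^*T_\Etau)$ has constant coefficient and is therefore $\pa$-closed in addition to being $\dbar$-closed; hence the $Q\G_g[\infty]$ term drops out upon evaluation. Splitting $\pa_{Dil} = \tfrac{\pa}{\pa(t\cdot 1)} - \Eu$, the first summand inserts $t$ as a new argument and produces $\abracket{t, t^{k_1}\mu_1, \ldots, t^{k_n}\mu_n}^{\F^\Etau}_{g,n+1}$, while the Euler vector field acts by multiplication by $n$ on the order-$n$ component. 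Rearranging gives the dilaton identity with prefactor $2g - 2 + n$.

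For the string axiom, the analogous manipulation yields
\begin{equation*}
Q\G[\infty] + \pa_{Str}[\infty]\F^{\Etau}[\infty] - \tfrac{\pa}{\pa(1)}S^{BCOV}_3 = 0
\end{equation*}
at $L=\infty$. The key simplification here is that $Y[L]$ vanishes on any harmonic $\mu$ for every $L$, because $e^{-u[\dbar,\dbar^*]}\mu = \mu$ and $\pa\mu = 0$, so $\dbar^*\pa\, e^{-u[\dbar,\dbar^*]}\mu \equiv 0$. Thus on harmonic inputs $\pa_{Str}[\infty]$ acts as $\tfrac{\pa}{\pa(1)} - \pa_{t^{-1}\star}$, with $\tfrac{\pa}{\pa(1)}$ producing $\abracket{1, t^{k_1}\mu_1, \ldots, t^{k_n}\mu_n}^{\F^\Etau}_{g,n+1}$ and $\pa_{t^{-1}\star}$ producing the descendant sum $\sum_i \abracket{t^{k_1}\mu_1, \ldots, t^{k_i-1}\mu_i, \ldots, t^{k_n}\mu_n}^{\F^\Etau}_{g,n}$. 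The residual contribution $\tfrac{\pa}{\pa(1)}S^{BCOV}_3$ is a quadratic functional supported at $\hbar^0$, so it contributes to the $n$-th derivative at $0$ only when $(g,n) = (0,2)$, which is exactly excluded by the stability hypothesis $2g + n \geq 3$.

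The main technical hurdle is the careful bookkeeping of $\hbar$-powers when expanding $(Q+\hbar\Delta_L+\delta D)e^{S_\delta/\hbar}$ to first order in $\delta$, and verifying that the scale-dependent operator $Y[L]$ truly vanishes on harmonic inputs so that the passage $L \to \infty$ is harmless. Once these are in place, the result reduces to unpacking the definitions of the Euler vector field, the derivations $\tfrac{\pa}{\pa(t\cdot 1)}$ and $\tfrac{\pa}{\pa(1)}$, and the correlation functions in Definition \ref{higher-genus-B-invariants}.
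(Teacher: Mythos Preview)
Your proof is correct and follows essentially the same approach as the paper: pass to $L=\infty$ where $\Delta_L$ vanishes, expand the homotopic master equation to first order in $\delta$, and restrict to $Q$-cohomology so that the $Q\G[\infty]$ term drops out. The paper's own proof is extremely terse (it simply records the equation $\bigl(\pa_{Dil}-2(\hbar\tfrac{\pa}{\pa\hbar}-1)\bigr)\F^{\Etau}[\infty]=Q\G[\infty]$ and says ``restrict to $Q$-cohomology''), whereas you have spelled out the $\hbar$-bookkeeping, the vanishing of $Y[L]$ on harmonic inputs, and the role of the stability bound in killing the $\tfrac{\pa}{\pa(1)}S_3^{BCOV}$ contribution---all of which are implicit in the paper's argument.
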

\begin{proof} The dilaton axiom at $L\to\infty$ says that
$$
   \bracket{\pa_{Dil}-2\bracket{\hbar{\pa\over \pa \hbar}-1}}\F^{\Etau}[\infty]=Q\G[\infty]
$$
If we restrict to $Q$-cohomology classes, we find
$$
     \bracket{\pa_{Dil}-(2g-2)}\F_g^{\Etau}[\infty]=0 \quad \text{on}\ \HH^*(\E_\Etau, Q)
$$
This proves the dilaton equation. Similar argument proves the string equation.
\end{proof}

\subsection{Chirality} Two dimensional quantum field theory usually behaves better than higher dimensions.  In this section, we show that the BCOV theory on elliptic curves is divergence free and its quantization can be built up using local functionals with holomorphic derivatives only.
\subsubsection{Translation invariant local functionals} The local functionals on $\E_\Etau$ can be described using D-module \cite{Costello-book}. Let $J(\E_\Etau)$ be the $D_{\Etau}$-module of smooth jets of $\PV_{\Etau}$ on the elliptic curve $\Etau$. If we choose coordinates on a domain $U$ on $\Etau$, then the sections of $J(\E_\Etau)$ are given by
$$
    \cinfty\bracket{U}\otimes \C[[z,\bar z, d\bar z, \pa_z, t]][2]
$$
The space of local functionals on $\E_\Etau$ can be described by
$$
   \Ol\bracket{\E_\Etau}=C_{red}^*\bracket{J(\E_\Etau)[-1]}\otimes_{D_{\Etau}} \Omega^2_{\Etau}[2]
$$
where $C_{red}^*\bracket{J(\E_\Etau)[-1]}$ is the reduced \CE complex taken in the symmetric monoidal category of $D_{\Etau}$ modules equipped with tensor product over $\cinfty_{\Etau}$, and $\Omega^2_{\Etau}$ is the right $D_{\Etau}$-module of top differential forms on $\Etau$.

$\Etau$ is equipped with translation transformations. We can also consider translation invariant local functionals. Let
$$
   J(\E_\Etau)^{\Etau}=\C[[z, \bar z, d\bar z , \pa_z, t]][2]
$$
be the space of translation invariants jets,
$$
   D=\C\bbracket{{\pa\over \pa z}, {\pa\over \pa \bar z}}\subset D_{\Etau}
$$
be the subspace of translation invariant differential operators in $D_\Etau$, and
$$
   \Omega^*=\C\bbracket{dz, d\bar z}
$$
be the translation invariant differential forms.  $J(\E_\Etau)^{\Etau}$ has a natural module structure over $D$. We will use $ \Ol\bracket{\E_\Etau}^{\Etau}$ to denote the space of translation invariant local functionals. Then
$$
     \Ol\bracket{\E_\Etau}^{\Etau}=C_{red}^*\bracket{ J(\E_\Etau)^{\Etau}[-1]}\otimes_{D}  \Omega^2[2]
$$

The dg structure of both $\Ol\bracket{\E_\Etau}$ and $\Ol\bracket{\E_\Etau}^{\Etau}$ are induced from the classical BCOV action
$$
   Q+\fbracket{S^{BCOV},-}
$$
The corresponding complex controls the quantization of BCOV theory on the elliptic curve \cite{Si-Kevin}.

\subsubsection{Chiral local functionals}
\begin{defn}
A local functional is said to be \emph{chiral} if it only contains holomorphic derivatives.
\end{defn}

We would like to consider translation invariant chiral local functionals. Let
$$
   D^{hol}=\C\bbracket{\pa\over \pa z}\subset D
$$
be the translation invariant holomorphic differential operators. We define a subcomplex of $J(\E_{\Etau})^\Etau[-1]$ by
$$
    \g^{hol}_{\E_\Etau}=\C[[z, \pa_z,t]][1]\subset J(\E_{\Etau})^\Etau[-1]
$$
Since $S^{BCOV}$ is chiral, it's easy to see that $\g^{hol}_{\E_\Etau}$ becomes a sub $L_\infty$ algebra. Moreover, the inclusion
$$
   \g^{hol}_{\E_\Etau}\into J(\E_{\Etau})^\Etau[-1]
$$
is quasi-isomorphic.

\begin{lem}\label{chiral-functional}
The translation invariant obstruction complex $\Ol\bracket{\E_\Etau}^{\Etau}$ is quasi-isomorphic to
$$
   \bracket{ C_{red}^*\bracket{ \g^{hol}_{\E_\Etau}}\otimes_{D^{hol}} \C dz[2]}\oplus \bracket{ C_{red}^*\bracket{ \g^{hol}_{\E_\Etau}}\otimes_{D^{hol}} \C dzd\bar z[2]}
$$
\end{lem}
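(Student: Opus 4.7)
The idea is to transport the quasi-isomorphism $\g^{hol}_{\E_\Etau} \hookrightarrow J(\E_\Etau)^{\Etau}[-1]$ (of $L_\infty$ algebras, noted in the paragraph preceding the lemma) through the construction of the obstruction complex. First I would observe that this inclusion is equivariant for the full translation algebra $D = \C[\pa/\pa z, \pa/\pa \bar z]$: on $\g^{hol}_{\E_\Etau}$, the operator $\pa/\pa \bar z$ acts as zero, which is consistent with the inclusion since the image consists of elements with no $\bar z$ or $d\bar z$ dependence. By the homotopy invariance of the reduced Chevalley-Eilenberg cochain functor under $L_\infty$ quasi-isomorphisms, one obtains a quasi-isomorphism $C_{red}^*(\g^{hol}_{\E_\Etau}) \simeq C_{red}^*(J(\E_\Etau)^{\Etau}[-1])$ of complexes of $D$-modules, and on the left-hand side $\pa/\pa\bar z$ acts trivially.

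Next I would compute $(-)\otimes_D \Omega^2[2]$ on both sides using the factorization $D = D^{hol}\otimes_\C \C[\pa/\pa\bar z]$. Writing $\Omega^2 = \C dz \otimes_\C \C d\bar z$ as the external tensor product of the trivial right $D^{hol}$-module $\C dz$ and the trivial right $\C[\pa/\pa\bar z]$-module $\C d\bar z$, for any left $D$-module complex $N$ we have
$$
    N\otimes^L_D \Omega^2 \simeq \bracket{N\otimes^L_{D^{hol}}\C dz}\otimes^L_{\C[\pa/\pa\bar z]} \C d\bar z.
$$
Applying this with $N = C_{red}^*(\g^{hol}_{\E_\Etau})$, on which $\pa/\pa\bar z$ acts by zero, the Koszul resolution $0\to \C[\pa/\pa\bar z]\xrightarrow{\pa/\pa\bar z}\C[\pa/\pa\bar z]\to \C d\bar z\to 0$ computes the outer tensor product with vanishing differential. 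The result therefore splits into two copies of $C_{red}^*(\g^{hol}_{\E_\Etau})\otimes_{D^{hol}}\C dz$, naturally labeled by the form degrees of $dz$ and $dzd\bar z$ (which differ by one), yielding exactly the decomposition stated.

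The main technical obstacle is checking that the underived tensor product $\otimes_D$ used in the definition of $\Ol\bracket{\E_\Etau}^{\Etau}$ computes the same answer as the derived $\otimes_D^L$ appearing in the Koszul calculation above. For $J(\E_\Etau)^{\Etau}[-1]$ this reduces to the fact that its explicit polynomial presentation in $z,\bar z, d\bar z,\pa_z, t$ exhibits it as a direct sum of free $D$-modules (up to a harmless shift accounting for the $\bar z, d\bar z$ generators), hence the same holds for $C_{red}^*$ of it, and both tensor products agree; this mirrors the analogous verification carried out for general local functional complexes in \cite{Costello-book}. With this flatness in place, the remaining bookkeeping--matching the cohomological degree, the Hodge weight, and the compatibility with the differential $Q+\fbracket{S^{BCOV},-}$--follows automatically from the $L_\infty$- and $D$-equivariance of the inclusion $\g^{hol}_{\E_\Etau}\into J(\E_\Etau)^{\Etau}[-1]$ already used in the first step.
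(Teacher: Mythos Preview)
Your proposal is correct and follows essentially the same route as the paper. The paper first replaces $\otimes_D\Omega^2$ by the translation-invariant de Rham complex via the Koszul resolution, then substitutes the quasi-isomorphic $\g^{hol}_{\E_\Etau}$ inside that complex, and finally uses that $\pa/\pa\bar z$ acts trivially on $\g^{hol}_{\E_\Etau}$ to split off the factor $\C[d\bar z]$ and reduce to $\otimes_{D^{hol}}\C dz$; you perform the same three moves in a slightly different order and in the language of iterated derived tensor products, and your flatness remark is exactly the justification the paper leaves implicit when identifying the underived $\otimes_D\Omega^2$ with the de Rham model.
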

\begin{proof} From Koszul resolution, $\Ol\bracket{\E_\Etau}^{\Etau}$ is quasi-isomorphic to de Rham complex for the $D_{\Etau}$-module $C_{red}^*\bracket{J(\E_\Etau)[-1]}$
   \begin{align*}
      \Ol\bracket{\E_\Etau}^{\Etau}&= C_{red}^*\bracket{J(\E_\Etau)^{\Etau}[-1]}\otimes_{D}\C dz d\bar z[2]
      \\&\simeq  \Omega^*\bracket{C_{red}^*\bracket{J(\E_\Etau)^{\Etau}[-1]}}[2]\\
         &\simeq \Omega^*\bracket{C_{red}^*\bracket{ \g^{hol}_{\E_\Etau}}}[2]\\
         &\simeq \C[d\bar z]\otimes \Omega_{hol}^{*}\bracket{C_{red}^*\bracket{ \g^{hol}_{\E_\Etau}}}[2]\\
         &\simeq \C[d\bar z]\otimes  \bracket{{C_{red}^*\bracket{ \g^{hol}_{\E_\Etau}}}\otimes_{D^{hol}}\C dz[2]}
   \end{align*}
\end{proof}

We describe the first component in the above decomposition as a subcomplex of $\Ol\bracket{\E_\Etau}^{\Etau}$. Let
$$
   \wedge_{d\bar z}: \E_\Etau\to \E_\Etau, \quad \mu\to d\bar z\wedge \mu
$$
It induces a derivation on functionals which we still denote by $\wedge_{d\bar z}$. This gives an embedding of complexes
$$
\wedge_{d\bar z}: C_{red}^*\bracket{ \g^{hol}_{\E_\Etau}}\otimes_{D^{hol}} \Omega^2[1]\into \Ol\bracket{\E_\Etau}^{\Etau}
$$
\begin{defn}
A translation invariant chiral local functional is said to be \emph{admissible} if it lies in the image of $\wedge_{d\bar z}$ in the above embedding.  The space of admisslbe local functionals will be denoted by $\Ol^{ad}\bracket{\E_\Etau}$.
\end{defn}

It's easy to check that the first component $ \bracket{ C_{red}^*\bracket{ \g^{hol}_{\E_\Etau}}\otimes_{D^{hol}} \C dz[2]}$ in Lemma \ref{chiral-functional} is precisely transgressed to admissible local functionals.  A functional is admissible if it's chiral with precisely one input containing $d\bar z$ and totally symmetric with the position of $d\bar z$. The following lemma is easy to check

\begin{lem}
Let  $S_1, S_2$ be two admissible functionals, then
\begin{enumerate}
\item $\dbar S_1=\dbar S_2=0$, and
\item $\fbracket{S_1, S_2}$ is also admissble.
\item The classical BCOV action $S^{BCOV}$ is admissible.
\end{enumerate}
\end{lem}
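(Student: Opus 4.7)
The plan is to unwind the definition of an admissible functional concretely and then verify each of the three claims using Stokes's theorem on the compact surface $\Etau$ together with a $d\bar z$-bidegree count through the trace pairing. Writing $\mu_j = \mu_j^{(0,0)} + \mu_j^{(0,1)}\, d\bar z$ for the decomposition by $d\bar z$-bidegree, any admissible $S$ takes the form
\begin{equation*}
S(\mu_1, \ldots, \mu_n) = \sum_{i=1}^{n} \int_{\Etau} T\bracket{\mu_1^{(0,0)}, \ldots, \mu_i^{(0,1)}, \ldots, \mu_n^{(0,0)}} \, dz \wedge d\bar z,
\end{equation*}
with $T$ a symmetric polynomial in its arguments involving only $t$ and the holomorphic derivatives $\partial_z$. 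Claim (3) is then transparent: $D_n S^{BCOV}$ involves no derivatives at all, and the trace $\Tr(\alpha_1\cdots\alpha_n)$ vanishes unless $\alpha_1\cdots\alpha_n\in\PV^{1,1}$ at $t=0$, which forces exactly one $\alpha_i$ to carry $d\bar z$; total symmetry in the position of the $d\bar z$-input is automatic from the symmetry of the Taylor coefficient.

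For (1), since $\dbar\mu_j = \partial_{\bar z}\mu_j^{(0,0)}\, d\bar z$ has vanishing $(*,0)$-part, the only surviving term in $S(\mu_1, \ldots, \dbar\mu_j, \ldots, \mu_n)$ is the one in which the admissible ``special slot'' $i$ coincides with $j$. Summing over $j$ and using the commutativity $[\partial_z, \partial_{\bar z}]=0$, the product rule for $\partial_{\bar z}$ lets us recombine the result as
\begin{equation*}
(\dbar S)(\mu_1, \ldots, \mu_n) = \pm \int_{\Etau} \partial_{\bar z}\bracket{T(\mu_1^{(0,0)}, \ldots, \mu_n^{(0,0)})}\, dz \wedge d\bar z,
\end{equation*}
which vanishes by Stokes's theorem on the closed surface $\Etau$.

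For (2), the Poisson bracket has the schematic form $\{S_1,S_2\}(\mu_1,\ldots,\mu_N)=\sum_{\sigma}\pm\Tr\bracket{V_{S_1}(\mu_{\sigma_I})\cdot W_{S_2}(\mu_{\sigma_J})}$ summed over partitions $I\sqcup J=\{1,\ldots,N\}$, where $V_{S_i}=\partial\circ W_{S_i}$. Chirality is immediately preserved since $\partial=\partial_z$ is holomorphic. For the $d\bar z$-count, admissibility of $S_i$ combined with the fact that $\Tr$ is non-trivial only on $\PV^{1,1}$ pins down the $d\bar z$-bidegree of the $W_{S_i}$-output: if all inputs of $W_{S_i}$ are $(*,0)$-type the output is $(*,0)$, while if exactly one input carries $d\bar z$ the output is $(*,1)$. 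A short case check then yields: if no $\mu_i$ carries $d\bar z$ both factors land in $(*,0)$ and the trace vanishes; if two or more do, the product contains $d\bar z\wedge d\bar z=0$; and when exactly one does, precisely one factor picks it up and the trace can be non-zero, with total symmetry in the position of the $d\bar z$-input inherited from the partition sum. The main technical obstacle is (2), since one must carefully track how the $d\bar z$-bidegree flows through the trace pairing that defines the Poisson bracket; the essential simplification is that on the one-dimensional $\Etau$ one has $d\bar z\wedge d\bar z=0$, which kills every configuration with more than one $d\bar z$.
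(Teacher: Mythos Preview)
The paper does not give a proof of this lemma; it simply states that ``the following lemma is easy to check'' immediately before the statement. Your proposal supplies exactly the kind of direct verification the paper omits, and the strategy---writing out the concrete shape of an admissible functional and then tracking the $d\bar z$-bidegree through $\dbar$ and through the Poisson bracket---is the natural one.

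The arguments for (1) and (3) are fine. For (2), the core idea is right, but two small points are worth tightening. First, you should state explicitly that the Poisson bracket of \emph{local} functionals is again local and translation-invariant (both are standard, but ``admissible'' includes these conditions). Second, the phrase ``total symmetry in the position of the $d\bar z$-input inherited from the partition sum'' is a bit quick: the cleaner observation is that $\{S_1,S_2\}$ is by construction a symmetric multilinear functional, and you have just shown it vanishes unless exactly one input carries $d\bar z$; these two facts together force the symmetry in the $d\bar z$-slot. With those clarifications the verification is complete.
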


\subsubsection{Quantization with chiral local functionals}

We analyze in detail the quantization with chiral local functionals in this section. We show that the quantization of BCOV theory on the elliptic curve  is given by
 local chiral functionals as a quantum correction of the classical BCOV action. Since the problem is local, we will work on $\C$. The space of fields will be
$$
   \E_\C= \PV^{*,*}_{\C,c}[[t]]
$$
where $\PV^{*,*}_{\C,c}$ is the compactly supported polyvector fields on $\C$. The classical BCOV action  and the translation-invariant admissible local functionals on $\C$, which will be denoted by $S^{BCOV}$ and $\Ol^{ad}\bracket{\E_{\C}}$ respectively, are the same as that on the elliptic curve.  The regularized BV operator $\Delta_L$ on $\C$ is the contraction with
$$
  {1\over 4\pi L}\bracket{{\bar z_1-\bar z_2}\over 4t}e^{-|z_1-z_2|^2/4t}\bracket{d\bar z_1\otimes 1+1\otimes d\bar z_2}
$$
and the regularized propagator is
$$
   \PP_\epsilon^L=\int_\epsilon^L {dt \over 4\pi L}\bracket{{\bar z_1-\bar z_2}\over 4t}^2e^{-|z_1-z_2|^2/4t}
$$
\subsubsection*{Renormalization group flow}
\begin{prop}\label{finiteness}
Let $S=\suml_{g\geq 0}\hbar^g S_g\in \Ol^{ad}(\E_\C)[\hbar]$, then for any $L>0$, the following limit exists
\begin{align}\label{effective-functional}
     S[L]=\lim_{\epsilon\to 0}\hbar \log\bracket{\exp\bracket{\hbar\pa_{\PP_\epsilon^L}} \exp\bracket{S/\hbar}}\in \OO\bracket{\E_\C}[[\hbar]]
\end{align}
and $F[L]$ satisfies the renormalization group flow equation
$$
    S[L]=W\bracket{\PP_\epsilon^L, F[\epsilon]}, \quad \forall \epsilon, L>0
$$
\end{prop}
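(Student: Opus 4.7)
The plan is to organize $S[L]$ as a formal sum over Feynman diagrams and then exploit the chiral structure of admissible functionals to show that each diagrammatic weight has a finite $\epsilon \to 0$ limit. The renormalization group flow equation then follows automatically from the semigroup property of the propagator, which is the easier half of the statement.

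First, using the standard BV/homological perturbation manipulations from \cite{Costello-book}, I would rewrite
$$
\hbar \log\bigl(\exp(\hbar\partial_{\PP_\epsilon^L})\exp(S/\hbar)\bigr) = \sum_\gamma \frac{\hbar^{g(\gamma)}}{|\op{Aut}(\gamma)|}\, W_\gamma(\PP_\epsilon^L, S),
$$
where the sum runs over connected stable graphs $\gamma$ whose vertices are labeled by Taylor coefficients of $S$ and whose internal edges are contracted using the propagator $\PP_\epsilon^L$. Each weight $W_\gamma(\PP_\epsilon^L, S)$ is a finite-dimensional integral over $\C^{V(\gamma)}$ together with the Schwinger parameters $(t_e) \in [\epsilon, L]^{E(\gamma)}$.

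The heart of the argument is to show that each $W_\gamma(\PP_\epsilon^L, S)$ extends continuously to $\epsilon = 0$. Divergences can only arise from degenerations in which some subset of Schwinger parameters tends to zero simultaneously with collision of the corresponding vertices. I would exploit two structural features: admissibility of $S$ ensures each vertex involves only holomorphic derivatives and contributes precisely one $d\bar z$ factor, while the propagator carries the prefactor $\bigl((\bar z_1 - \bar z_2)/4t\bigr)^2 e^{-|z_1-z_2|^2/4t}$. For each collapsing subdiagram, after rescaling $z \mapsto \sqrt{t}\, z$ on the shortest internal edge, the quadratic $(\bar z)^2$ on each propagator cancels against the $t^{-2}$ short-distance weight, and since no $\bar\partial$-derivative is applied anywhere at the vertices (chirality), no residual antiholomorphic singularity survives the Gaussian integration in each loop. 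A superficial-degree-of-convergence count, performed graph by graph using these cancellations, then proves that the integral in $(t_e)$ is absolutely integrable down to $t_e = 0$.

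For the renormalization group flow equation, the semigroup identity $\PP_\epsilon^L = \PP_\epsilon^{\epsilon'} + \PP_{\epsilon'}^L$ gives
$$
\exp(\hbar\partial_{\PP_\epsilon^L}) = \exp(\hbar\partial_{\PP_{\epsilon'}^L})\circ \exp(\hbar\partial_{\PP_\epsilon^{\epsilon'}})
$$
as commuting operators on $\OO(\E_\C)[[\hbar]]$. Taking $\hbar\log$ and then sending $\epsilon \to 0$ (using the finiteness established in the previous step at scale $\epsilon'$) produces $S[L] = W(\PP_{\epsilon'}^L, S[\epsilon'])$, which is the required equation.

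The main obstacle is the finiteness step: one must control every Feynman graph uniformly, and in particular verify that no nested subdivergence survives the chiral power-counting. The delicate point is that the estimate is not merely on individual propagators but on products of propagators sharing vertices, so the argument must be phrased in terms of subgraphs of $\gamma$ and carried out with enough care that the $(\bar z_i - \bar z_j)^2$ factors from distinct edges are not double-counted against the same loop measure $d^2 z$.
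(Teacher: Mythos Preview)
Your strategy is correct and matches the paper's: the paper's proof of this proposition is a single sentence citing \cite[Prop~B.1]{Li-modular}, so the entire finiteness argument is delegated to an external graph-integral estimate, and the renormalization group flow equation is treated as an immediate consequence. Your proposal unpacks exactly what such an estimate must contain---the Feynman expansion, the chirality-based power counting using the $(\bar z_1-\bar z_2)^2$ numerator of the propagator against the absence of antiholomorphic derivatives at admissible vertices, and the semigroup identity $\PP_\epsilon^L=\PP_\epsilon^{\epsilon'}+\PP_{\epsilon'}^L$ for the flow equation---so in content you are reconstructing the cited result rather than diverging from the paper.

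The only caveat is the one you already identify: the superficial-degree-of-divergence count for nested subgraphs is genuinely the technical core, and your sketch does not yet pin down the precise inequality (in two dimensions the naive count is marginal, and it is specifically the extra antiholomorphic numerators, unmatched by any $\bar\partial$ at the chiral vertices, that push each subgraph to be strictly convergent). If you intend to make this self-contained rather than citing \cite{Li-modular}, that is the step requiring a careful inductive statement on subgraphs; otherwise your outline is already more detailed than what the paper itself provides.
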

\begin{proof}This follows from a finiteness result for graph integrals  in \cite[Prop B.1]{Li-modular}.
\end{proof}
This proposition says that if we build up the quantum theory using admissible functionals, then the counter-terms vanish. \\

\subsubsection*{Quantum master equation}$\empty$\\

We will see how the quantum master equation will look like for admissible functionals.  Let $S=\suml_{g\geq 0}\hbar^g S_g\in \Ol^{ad}(\E_\C)[\hbar]$. By Proposition \ref{finiteness}, we obtain a family of functionals $\fbracket{S[L]}_{L>0}$ satisfying renormalization group equation. The quantum master equation is given by
$$
     \bracket{Q+\hbar \Delta_L}e^{S[L]/\hbar}=0
$$
By construction,
\begin{align*}
       \bracket{Q+\hbar \Delta_L}e^{S[L]/\hbar}&=\lim_{\epsilon\to 0}\bracket{Q+\hbar \Delta_L}e^{\hbar \pa_{P_\epsilon^L}}e^{S/\hbar}\\
       &=\lim_{\epsilon\to 0}e^{\hbar \pa_{\PP_\epsilon^L}}\bracket{Q+\hbar \Delta_\epsilon}e^{S/\hbar}\\
       &=\lim_{\epsilon\to 0}\hbar e^{\hbar \pa_{\PP_\epsilon^L}}\bracket{-(t\pa) S+{1\over 2}\fbracket{S, S}_\epsilon}e^{S/\hbar}
\end{align*}

\begin{lem-defn} \label{deformed BV}
Let $S_1, S_2\in \Ol^{ad}(\E_\C)[[\hbar]]$ be two admissible local functionals, then the following limit exist
\begin{align}
    \fbracket{S_1, S_2}^\prime\equiv\lim_{\epsilon\to 0} e^{\hbar {\pa\over \pa \PP_\epsilon^L}}\Delta_\epsilon\bracket{S_1, S_2}
\end{align}
as an element in $\Ol^{ad}(\E_\C)[[\hbar]]$. It doesn't depend on $L$, and it can be viewed as the quantum deformation of the classical Poisson bracket.
\end{lem-defn}
\begin{proof} This follows from in \cite[Prop B.2]{Li-modular}.
\end{proof}

This Lemma implies that the obstruction of quantization with admissible local functionals is still admissible.

\begin{prop}
Let $S\in \Ol^{ad}(\E_\C)[[\hbar]]$, and $\fbracket{S[L]}_{L}$ be the effective functional defined by \eqref{effective-functional}. Then $\fbracket{S[L]}_{L}$ satisfies the quantum master equation if and only if
\begin{align} \label{deformed-QME}
   QS+{1\over 2}\fbracket{S, S}^\prime=0
\end{align}
\end{prop}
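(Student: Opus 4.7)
The plan is to rewrite the quantum master equation at scale $L$ as a bare statement via the conjugation identity, and then use admissibility to simplify.

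First, by Proposition \ref{finiteness}, $e^{S[L]/\hbar} = \lim_{\epsilon\to 0} e^{\hbar\pa_{\PP_\epsilon^L}} e^{S/\hbar}$. The commutation $[Q,\pa_{\PP_\epsilon^L}] = \Delta_\epsilon - \Delta_L$ together with $[\Delta_L,\pa_{\PP_\epsilon^L}]=0$ (both are contractions with constant kernels) gives the conjugation identity $(Q+\hbar\Delta_L)\,e^{\hbar\pa_{\PP_\epsilon^L}} = e^{\hbar\pa_{\PP_\epsilon^L}}(Q+\hbar\Delta_\epsilon)$. Hence
\begin{equation*}
(Q+\hbar\Delta_L)e^{S[L]/\hbar} \;=\; \lim_{\epsilon\to 0} e^{\hbar\pa_{\PP_\epsilon^L}}(Q+\hbar\Delta_\epsilon)e^{S/\hbar},
\end{equation*}
which is exactly the identity already displayed in the excerpt just above the proposition.

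Next, since $Q$ is a derivation and $\Delta_\epsilon$ is a second-order BV operator,
\begin{equation*}
(Q+\hbar\Delta_\epsilon)e^{S/\hbar} \;=\; \hbar^{-1}\bracket{QS + \hbar\Delta_\epsilon S + \tfrac12\fbracket{S,S}_\epsilon}e^{S/\hbar}.
\end{equation*}
Admissibility of $S$ provides two simplifications that reproduce the bracketed expression shown in the excerpt: $\dbar S=0$, so $QS=-(t\pa)S$ is itself admissible; and $\Delta_\epsilon S = 0$, because the $d\bar z$-component of the BV kernel clashes with the distinguished $d\bar z$-position in admissible Lagrangians, yielding $d\bar z\wedge d\bar z$ upon symmetrization.

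The final step is to perform the factorization
\begin{equation*}
\lim_{\epsilon\to 0} e^{\hbar\pa_{\PP_\epsilon^L}}\bbracket{T_\epsilon\cdot e^{S/\hbar}} \;=\; T_{\textup{eff}}[L]\cdot e^{S[L]/\hbar},
\end{equation*}
valid for admissible families $T_\epsilon$ by the cumulant/connected-component decomposition of Feynman sums: every graph with one $T$-vertex and arbitrarily many $S$-vertices factors as its $T$-rooted connected component times the product of $S$-only connected components, the latter summing to $e^{S[L]/\hbar}$. Applied to $T_\epsilon = QS + \tfrac12\fbracket{S,S}_\epsilon$, each summand is admissible (the bracket by Lemma/Definition \ref{deformed BV}), and the effective $\epsilon\to 0$ limit is by construction $T_{\textup{eff}}[L] = QS + \tfrac12\fbracket{S,S}^\prime$, which is precisely the combination singled out by the definition of the primed bracket. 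Invertibility of $e^{S[L]/\hbar}$ in $\OO(\E_\C)[[\hbar]]$ then gives the stated equivalence. The main technical obstacle is making the factorization rigorous and controlling the $\epsilon\to 0$ limit of the dressed graph sum; this is an adaptation of the admissibility-based estimates of \cite[Prop.\ B.1, B.2]{Li-modular} from the partition function $e^{S/\hbar}$ to the operator insertion $T_\epsilon\cdot e^{S/\hbar}$.
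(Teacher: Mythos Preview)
Your approach mirrors the paper's: conjugate $(Q+\hbar\Delta_L)$ through $e^{\hbar\pa_{\PP_\epsilon^L}}$, use admissibility to kill $\dbar S$ and $\Delta_\epsilon S$, and invoke Lemma/Definition~\ref{deformed BV} for the bracket term. The first two steps are fine.

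The gap is in your factorization step. You define $T_{\textup{eff}}[L]$ as the $T$-rooted connected component in the expansion of $e^{\hbar\pa_{\PP_\epsilon^L}}\bbracket{T_\epsilon\, e^{S/\hbar}}$, which by your own description contains arbitrarily many $S$-vertices joined to the $T$-vertex by propagators $\PP_\epsilon^L$. This object is genuinely $L$-dependent and is \emph{not} equal to $QS+\tfrac12\fbracket{S,S}^\prime$. The primed bracket, by its definition $\fbracket{S,S}^\prime=\lim_{\epsilon\to 0}e^{\hbar\pa_{\PP_\epsilon^L}}\fbracket{S,S}_\epsilon$, absorbs only the internal contractions between the two copies of $S$ inside the bracket; it does not absorb the further contractions with the ambient $e^{S/\hbar}$. (Relatedly, your claim that ``each summand is admissible'' is off: $\fbracket{S,S}_\epsilon$ is bilocal, not admissible; it is only the $\epsilon\to 0$ limit with the internal propagators attached that is local and admissible.) So the identification $T_{\textup{eff}}[L]=QS+\tfrac12\fbracket{S,S}^\prime$ is false as stated, and invertibility of $e^{S[L]/\hbar}$ alone does not yield the equivalence.

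The paper sidesteps this by not factoring out $e^{S[L]/\hbar}$. It writes
\[
(Q+\hbar\Delta_L)e^{S[L]/\hbar}\;=\;\hbar\, e^{\hbar\pa_{\PP_0^L}}\bracket{QS+\tfrac12\fbracket{S,S}^\prime}e^{S/\hbar},
\]
obtained by first letting the internal contractions on the bracket converge to the local admissible $\fbracket{S,S}^\prime$ (Lemma/Definition~\ref{deformed BV}), and then applying Proposition~\ref{finiteness} to the remaining contractions with $e^{S/\hbar}$. The right-hand side is exactly the RG flow of the admissible family $S+\delta\bracket{QS+\tfrac12\fbracket{S,S}^\prime}$ at first order in $\delta$; since RG flow is injective on local functionals (take $L\to 0$), it vanishes for all $L$ iff $QS+\tfrac12\fbracket{S,S}^\prime=0$. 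Your argument is repaired by this two-step passage to the limit, or equivalently by observing that the correct $T_{\textup{eff}}[L]$ satisfies $\lim_{L\to 0}T_{\textup{eff}}[L]=QS+\tfrac12\fbracket{S,S}^\prime$.
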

\begin{proof}
By the above lemma,
\begin{align*}
 \bracket{Q+\hbar \Delta_L}e^{S[L]/\hbar}=\hbar e^{\hbar \pa_{P_0^L}}\bracket{QS+{1\over 2}\fbracket{S,S}^\prime}e^{S/\hbar}
\end{align*}
The proposition now follows.
\end{proof}

\begin{rmk}
Equation (\ref{deformed-QME}) can be viewed as quantum corrected equation for the classical master equation. The classical BV bracket contains a single contraction between two local functionals, and the quantization deforms the BV bracket to include all multi-contractions. In fact, using the formula in \cite[Prop B.2]{Li-modular}, it's easy to see that $\fbracket{S, S}^\prime$ is precisely the OPE for certain fields in conformal field theory. This point of view will be addressed in a separate paper.
\end{rmk}

\subsubsection*{Quantization of BCOV theory on elliptic curves}

\begin{thm}\label{admissible-quantization}
There exists an admissible functional $S=\sum\limits_{g\geq 0}S_g \in \Ol^{ad}\bracket{\E_\Etau}[[\hbar]]$ with $S_0=S^{BCOV}$ satisfying
$$
   QS+{1\over 2}\fbracket{S, S}^\prime=0
$$
such that the quantization of BCOV theory $\F^{\Etau}[L]$ is given by

\begin{align*}
     \F^{\Etau}[L]=\lim_{\epsilon\to 0}\hbar \log\bracket{\exp\bracket{\hbar\pa_{\PP_\epsilon^L}} \exp\bracket{S/\hbar}}
\end{align*}

\end{thm}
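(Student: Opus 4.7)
The plan is to construct $S$ by induction on the genus filtration $S=\sum_{g\geq 0}\hbar^g S_g$, staying throughout inside $\Ol^{ad}(\E_\Etau)$, and then identify the resulting effective family with $\F^{\Etau}[L]$ via the homotopy uniqueness asserted in Theorem \ref{theorem-quantization}. The base case sets $S_0=S^{BCOV}$, which is admissible by the explicit description of the classical BCOV action, and satisfies $QS_0+\tfrac12\{S_0,S_0\}=0$. Since the deformed bracket $\{-,-\}'$ agrees with the classical Poisson bracket modulo $\hbar$, this is exactly the deformed quantum master equation \eqref{deformed-QME} modulo $\hbar$.

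For the inductive step, assume $S^{(<g)}=S_0+\hbar S_1+\cdots+\hbar^{g-1}S_{g-1}\in\Ol^{ad}(\E_\Etau)[\hbar]/\hbar^g$ has been constructed and satisfies \eqref{deformed-QME} modulo $\hbar^g$. By Lemma/Definition \ref{deformed BV}, the deformed bracket preserves admissibility, so the obstruction
\[
O_g:=\hbar^{-g}\bracket{QS^{(<g)}+\tfrac12\{S^{(<g)},S^{(<g)}\}'}\bmod\hbar
\]
lies in $\Ol^{ad}(\E_\Etau)$. A standard calculation (applying $Q+\{S^{BCOV},-\}'$ and using the graded Jacobi identity for $\{-,-\}'$) shows that $O_g$ is closed in the admissible deformation complex $(\Ol^{ad}(\E_\Etau),\,Q+\{S^{BCOV},-\}')$. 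Finding $S_g\in\Ol^{ad}(\E_\Etau)$ with $(Q+\{S^{BCOV},-\}')S_g=-O_g$ completes the inductive step.

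The key technical point is to check that this obstruction class is trivial in the admissible deformation complex. Because the heat-kernel construction is translation invariant, $O_g$ lies in the translation-invariant admissible subcomplex, and Lemma \ref{chiral-functional} identifies the relevant cohomology with that of $C^*_{red}(\g^{hol}_{\E_\Etau})\otimes_{D^{hol}}\C dz[2]$. On the other hand, Theorem \ref{theorem-quantization} already produces a full (not a priori admissible) quantization $\F^{\Etau}[L]$, hence the image of $[O_g]$ in the full translation-invariant deformation cohomology is zero; since the admissible subcomplex is quasi-isomorphic to the full translation-invariant complex via the embedding $\wedge_{d\bar z}$, the class $[O_g]$ is already zero in admissible cohomology, and we may solve for $S_g$. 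This is the main obstacle, because it requires matching the two complexes and keeping track of the dilaton-compatible representative (to later match the uniqueness class of Theorem \ref{theorem-quantization}).

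Finally, once $S\in\Ol^{ad}(\E_\Etau)[[\hbar]]$ is assembled, Proposition \ref{finiteness} makes the effective family $S[L]$ defined by \eqref{effective-functional} well-defined and renormalization-group compatible, and the proposition immediately before the theorem converts the deformed QME \eqref{deformed-QME} for $S$ into the scale-$L$ quantum master equation for $S[L]$. Thus $\{S[L]\}$ is a genuine quantization of BCOV with classical limit $S^{BCOV}$; by the uniqueness-up-to-homotopy statement of Theorem \ref{theorem-quantization} (and a small additional argument to choose the admissible representative so that the dilaton equation holds), it is homotopic to $\F^{\Etau}[L]$, yielding the desired formula.
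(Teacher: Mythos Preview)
Your approach is the same as the paper's---combine Lemma~\ref{chiral-functional}, Lemma/Definition~\ref{deformed BV}, and Theorem~\ref{theorem-quantization}---but your inductive step contains a genuine gap and a smaller imprecision.

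The imprecision: Lemma~\ref{chiral-functional} does \emph{not} say that the admissible subcomplex is quasi-isomorphic to the full translation-invariant obstruction complex. It identifies $\Ol(\E_\Etau)^{\Etau}$ with a direct sum of two pieces, and the admissible functionals correspond only to the first summand. What you actually need (and what does follow) is that the inclusion $\Ol^{ad}\hookrightarrow\Ol(\E_\Etau)^{\Etau}$ is injective on cohomology, being the inclusion of a summand.

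The gap: your sentence ``Theorem~\ref{theorem-quantization} already produces a full quantization $\F^{\Etau}[L]$, hence the image of $[O_g]$ in the full translation-invariant deformation cohomology is zero'' is not justified. The obstruction class $O_g$ depends on your particular admissible choices $S_1,\dots,S_{g-1}$; the mere existence of \emph{some} full quantization only tells you that the obstruction vanishes for \emph{some} (possibly different, non-admissible) sequence of lower-genus terms. You cannot conclude that your $[O_g]$ vanishes without first knowing your partial admissible quantization is homotopic to the partial full one---which would require the dilaton compatibility you have deferred to the end. The paper avoids this circularity: once Lemma~\ref{chiral-functional} and Lemma/Definition~\ref{deformed BV} show that the obstruction complex \emph{itself} can be taken to be $(\Ol^{ad}(\E_\Etau),\,Q+\{S^{BCOV},-\})$, the entire obstruction-theoretic argument underlying Theorem~\ref{theorem-quantization} (carried out in \cite{Si-Kevin}) runs verbatim in the admissible complex, producing $S$ directly. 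Equivalently, a quasi-isomorphism of the controlling $L_\infty$ algebras induces a bijection on Maurer--Cartan moduli, so existence transfers wholesale rather than obstruction-by-obstruction. Your final paragraph, invoking Proposition~\ref{finiteness} and uniqueness to identify the resulting effective family with $\F^{\Etau}[L]$, is then correct.
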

\begin{proof} It follows from Lemma \ref{chiral-functional} and Lemma/Definition \ref{deformed BV} that the obstruction complex of the quantization can be equivalently described using admissible functionals

$$
   \bracket{\Ol^{ad}\bracket{\E_\Etau}, Q+\fbracket{S^{BCOV},-}}
$$

Then the theorem follows from Theorem \ref{theorem-quantization}.
\end{proof}

\subsection{Conformal symmetry}
In the previous section, we find a local description of the quantization $\F^{\Etau}[L]$ given by chiral local functionals. By Proposition \ref{finiteness}, the quantum theory is in fact divergence free. This implies that we can impose certain conformal symmetry for the quantum theory.

Let's consider the translation invariant BCOV theory on $\C$. The space of fields is again denoted by
$$
  \E_{\C}=\PV_{\C,c}^{*,*}[[t]]
$$
 Let $R_\lambda$ be the following rescaling operator on fields
$$
  R_\lambda\bracket{t^k \alpha(z,\bar z)d\bar z^n\pa_z^m}=\lambda^{n-m}t^k\alpha(\lambda z, \lambda \bar z)d\bar z^n \pa_z^m, \quad \lambda\in \R^+
$$
It induces a rescaling operation on the functionals given by
$$
    R_\lambda^*I[\mu_1,\cdots, \mu_n]=I[R_{\lambda^{-1}}\mu_1,\cdots, R_{\lambda^{-1}}\mu_n]
$$
for $I\in \OO\bracket{\E_{\C}}, \mu_i\in \E_{\C}$.

\begin{prop} Let $\fbracket{\F[L]}_{L>0}$ be a family of effective actions satisfying renormalization group flow and quantum master equation, then
$$
     \F_{\lambda}[L]\equiv \lambda^{2\hbar{\pa\over \pa \hbar}-2}R_\lambda^*\bracket{\F[\lambda^2 L]}
$$
also satisfies the renormalization group flow and quantum master equation.
\end{prop}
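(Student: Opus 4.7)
The plan is to verify that $R_\lambda^*$ intertwines every operator appearing in the renormalization group flow and the quantum master equation with explicit $\lambda$-factors, so that the formal substitution $\hbar\mapsto\lambda^2\hbar$ implemented by $\lambda^{2\hbar\pa_\hbar}$ absorbs exactly these factors, leaving both equations invariant.

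First I would establish the key intertwining relations. Since $R_\lambda$ acts trivially on $t$ and, by the grading factor $\lambda^{n-m}$ built into its definition, commutes with both $\dbar$ and $\pa$ (a direct check on $f\, d\bar z^n \pa_z^m$), we obtain $R_\lambda^* Q = Q R_\lambda^*$. The heat kernel $K_L$ and the propagator $\PP_\epsilon^L$ are explicit Gaussians in $z_1-z_2$ depending on a heat-time parameter. Applying $R_\lambda^{\otimes 2}$ to the explicit formulas, and making the change of variables $t\mapsto t/\lambda^2$ in the heat-time, yields the scaling relations
\begin{align*}
R_\lambda^{\otimes 2} K_L = \lambda^{-2}\, K_{L/\lambda^2}, \qquad R_\lambda^{\otimes 2}\PP_\epsilon^L = \lambda^{-2}\, \PP_{\epsilon/\lambda^2}^{L/\lambda^2}.
\end{align*}
Using the general identity $\pa_K\circ R_\lambda^* = R_\lambda^*\circ \pa_{R_{\lambda^{-1}}^{\otimes 2} K}$ for the action of a kernel on functionals, these translate into the operator intertwinings
\begin{align*}
R_\lambda^* \Delta_{\lambda^2 L} = \lambda^{-2}\, \Delta_L\, R_\lambda^*, \qquad R_\lambda^* \pa_{\PP_{\lambda^2\epsilon}^{\lambda^2 L}} = \lambda^{-2}\, \pa_{\PP_\epsilon^L}\, R_\lambda^*.
\end{align*}

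Next I would assemble the proof. By definition, $\F_\lambda[L] = \lambda^{-2}\bigl(R_\lambda^*\F[\lambda^2 L]\bigr)\bigl|_{\hbar\mapsto\lambda^2\hbar}$, so that $e^{\F_\lambda[L]/\hbar}$ equals $\exp\bigl((R_\lambda^*\F[\lambda^2 L])(\lambda^2\hbar)/(\lambda^2\hbar)\bigr)$. Starting from the RG flow $e^{\F[\lambda^2 L]/\hbar} = e^{\hbar \pa_{\PP_{\lambda^2\epsilon}^{\lambda^2 L}}} e^{\F[\lambda^2\epsilon]/\hbar}$ at scale $\lambda^2 L$ and applying $R_\lambda^*$, which is an algebra automorphism commuting with exponentiation, the propagator intertwining gives
\begin{align*}
e^{R_\lambda^*\F[\lambda^2 L]/\hbar} = e^{\lambda^{-2}\hbar\, \pa_{\PP_\epsilon^L}}\, e^{R_\lambda^*\F[\lambda^2\epsilon]/\hbar}.
\end{align*}
The substitution $\hbar\mapsto\lambda^2\hbar$ now absorbs the $\lambda^{-2}$ in front of $\pa_{\PP_\epsilon^L}$ and identifies the two sides with $e^{\F_\lambda[L]/\hbar}$ and $e^{\hbar\pa_{\PP_\epsilon^L}}e^{\F_\lambda[\epsilon]/\hbar}$ respectively, producing the desired RG flow. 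The QME is treated identically: applying $R_\lambda^*$ to $(Q+\hbar\Delta_{\lambda^2 L})e^{\F[\lambda^2 L]/\hbar}=0$ and using the $Q$- and $\Delta$-intertwinings gives $(Q+\lambda^{-2}\hbar\Delta_L)e^{R_\lambda^*\F[\lambda^2 L]/\hbar}=0$, and the substitution $\hbar\mapsto\lambda^2\hbar$ then yields $(Q+\hbar\Delta_L)e^{\F_\lambda[L]/\hbar}=0$.

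The main technical obstacle is keeping the $\lambda$-powers straight, which requires tracking three independent scalings: the coordinate dilation $z\mapsto\lambda z$ inside the Gaussian, the form-grading factor $\lambda^{n-m}$ in the definition of $R_\lambda$, and the change of variable in the internal heat-time $t$. The specific exponent $2\hbar\pa_\hbar-2$ is not a free choice; it is dictated by the requirement that the $\lambda^{-2}$ picked up by each propagator contraction and each BV insertion be exactly compensated by the $\lambda^{2}$ from $\hbar\mapsto\lambda^2\hbar$, with the residual overall $\lambda^{-2}$ accounting for the classical genus zero contribution.
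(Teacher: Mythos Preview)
Your proof is correct and is exactly the straightforward computation the paper alludes to; you have simply made explicit the scaling identities for $K_L$, $\PP_\epsilon^L$, and $Q$ under $R_\lambda$ and shown that the substitution $\hbar\mapsto\lambda^2\hbar$ absorbs the resulting $\lambda^{-2}$ factors.
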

\begin{proof}
This is a straight-forward computation.
\end{proof}
Let's consider the effective functional constructed from admissible local functionals by \eqref{effective-functional}
\begin{align*}
     \F[L]=\lim_{\epsilon\to 0}\hbar \log\bracket{\exp\bracket{\hbar\pa_{P_\epsilon^L}} \exp\bracket{S/\hbar}}\in \OO\bracket{\E_{\C}}[[\hbar]]
\end{align*}
where $S=\sum\limits_{g\geq 0}S_g\hbar ^g\in \Ol^{ad}(\E_{\C})[[\hbar]]$. The rescaling operation on the effective functional $\F[L]$ induces a rescaling operation on the local functional $S$. In fact,
$$
    \lim_{L\to 0}\F_{\lambda}[L]=\lambda^{2\hbar{\pa\over \pa \hbar}-2}R_{\lambda}^*S
$$
Therefore we can restrict to the quantization which is fixed under the $\lambda$-rescaling. This imposes the further condition
\begin{align}
    R_{\lambda}^*S_g=\lambda^{2-2g}S_g
\end{align}

\begin{prop}\label{holomorphic derivatives}
Let $S=\sum\limits_{g\geq 0}\hbar^g S_g=\lim\limits_{L\to 0}\F^{\Etau}[L]$ be the admissible functional constructed in Theorem \ref{admissible-quantization}, then $S_g$ contains precisely $2g$ holomorphic derivatives.
\end{prop}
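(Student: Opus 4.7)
The plan is to combine the conformal rescaling invariance just discussed with the degree and Hodge-weight axioms to pin down the derivative count of every admissible term appearing in $S_g$. First I would establish that the admissible local functional $S$ from Theorem \ref{admissible-quantization} may be chosen so that $R_\lambda^* S_g = \lambda^{2-2g} S_g$. By the preceding proposition, $\F^{\Etau}_\lambda[L] := \lambda^{2\hbar\pa_\hbar - 2} R_\lambda^* \F^{\Etau}[\lambda^2 L]$ satisfies the renormalization group flow and the quantum master equation; a short check shows that $R_\lambda$ commutes with $\pa_{Dil}$ and transports the dilaton homotopy $\G[L]$ to a dilaton homotopy for $\F^{\Etau}_\lambda[L]$, so the rescaled quantization still satisfies the dilaton axiom. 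By uniqueness (Theorem \ref{theorem-quantization}) we may replace $\F^{\Etau}[L]$ by an $R_\lambda$-equivariant representative, and taking $L\to 0$ yields $R_\lambda^* S_g = \lambda^{2-2g} S_g$.

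Given the invariance, the derivative count is extracted by a direct scaling computation. Write a generic input as $\mu_\ell = t^{k_\ell} f_\ell(z,\bar z) (d\bar z)^{j_\ell} (\pa_z)^{i_\ell}$ with $i_\ell,j_\ell \in \{0,1\}$, and suppose a given term of $S_g$ carries $c$ holomorphic derivatives acting on the $f_\ell$'s. Under $R_{\lambda^{-1}}$ each polyvector factor contributes $\lambda^{i_\ell - j_\ell}$ (from the definition of $R_\lambda$ on $t^k\alpha\,d\bar z^n\pa_z^m$), each $\pa_z$ falling on $f_\ell(\lambda^{-1}z,\lambda^{-1}\bar z)$ contributes $\lambda^{-1}$, and the change of variable $z=\lambda w$ in $\int dz\,d\bar z$ contributes $\lambda^{+2}$. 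Matching against $\lambda^{2-2g}$ yields
$$
c \;=\; \sum_\ell (i_\ell - j_\ell) \;+\; 2g.
$$

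To finish it suffices to show $\sum_\ell(i_\ell - j_\ell)=0$. The monomial $t^{k_\ell}\mu_\ell \in t^{k_\ell}\PV^{i_\ell, j_\ell}[2]$ has cohomological degree $i_\ell + j_\ell + 2k_\ell - 2$ and Hodge weight $i_\ell + k_\ell - 1$. The axioms $\deg\F_g = 4-4g$ and $\HW(\F_g)=2-2g$ force any non-vanishing term of $S_g$ to satisfy
\begin{align*}
\sum_\ell (i_\ell + j_\ell + 2k_\ell - 2) &= 4g-4, \\
\sum_\ell (i_\ell + k_\ell - 1) &= 2g-2,
\end{align*}
and twice the second minus the first gives $\sum_\ell(i_\ell - j_\ell)=0$, so the identity above collapses to $c=2g$. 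The only non-routine step is the first: checking that $R_\lambda$ is a genuine symmetry of the dilaton quantization, so that uniqueness produces an $R_\lambda$-equivariant representative; the rest is bookkeeping with cohomological degrees and Hodge weights.
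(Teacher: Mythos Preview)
Your proof is correct and follows essentially the same route as the paper. The paper's argument is the one-line observation that the rescaling identity $R_\lambda^* S_g = \lambda^{2-2g} S_g$, combined with ``the structure of the admissible local functionals'', forces exactly $2g$ holomorphic derivatives. The only real difference is in the last step: where the paper invokes admissibility directly---an admissible functional has precisely one input carrying $d\bar z$, and the trace pairing $\Tr(\mu)=\int(\mu\vee dz)\wedge dz$ is nonzero only on $\PV^{1,1}$, so the polyvector types balance to give $\sum_\ell(i_\ell-j_\ell)=0$ automatically---you derive that same vanishing by combining the cohomological-degree and Hodge-weight axioms. Both routes are valid; the paper's is a bit more direct once the admissibility structure is unpacked, while yours has the mild advantage of making explicit how the degree and Hodge-weight constraints enter, and of spelling out why the rescaled family still satisfies the dilaton axiom so that uniqueness applies.
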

\begin{proof} It follows from the condition
$$
R_{\lambda}^*S_g=\lambda^{2-2g}S_g
$$
and the structure of the admissible local functionals.
\end{proof}

\subsection{Virasoro equations}
We will discuss in this section the Virasoro symmetries for the quantization $\F^\Etau[L]$ on the elliptic curve $\Etau$ proved in \cite{Si-Kevin}. This can be viewed as the mirror equations for the Virasoro constraints of Gromov-Witten invariants on elliptic curves, first discovered by \cite{Virasoro-GW}, and  proved by \cite{virasoro} in general.

We define the following operators $E_m, Z_m$ for $m\geq -1$. If $m\geq 0$, then
\begin{eqnarray*}
       E_m:  \PV^{i,j}_\Etau[[t]]&\to& \PV^{i,j}_\Etau[[t]]\\
             t^k \alpha &\to& t^{m+k}\bracket{k+i}_{m+1} \alpha\\
        Z_m:      \PV^{i,j}_\Etau[[t]]&\to& \PV^{i,j+1}_\Etau[[t]]\\
           t^k \alpha&\to&  t^{m+k}\bracket{k+i}_{m+1}d\bar z\wedge \alpha
\end{eqnarray*}
where $(n)_m=n(n+1)\cdots (n+m-1)$ is the Pochhammer symbol. For $m=-1$, we have
\begin{eqnarray*}
           E_{-1}:  \PV^{i,j}_\Etau[[t]]&\to& \PV^{i,j}_\Etau[[t]]\\
             t^k \alpha &\to& \begin{cases} t^{k-1} \alpha& k>0 \\ 0 & k=0  \end{cases}\\
        Z_{-1}:      \PV^{i,j}_\Etau[[t]]&\to& \PV^{i,j+1}_\Etau[[t]]\\
           t^k \alpha&\to&  \begin{cases}t^{k-1} d\bar z\wedge \alpha & k>0 \\ 0 & k=0 \end{cases}
\end{eqnarray*}

Both $E_m$ and $Z_m$ naturally induce the operators acting on $\OO(\Etau)$, which we denote by the same symbols.

\begin{defn} We define the effective Virasoro operators $\{\mathcal L_m[L], \mathcal D_m[L]\}_{m\geq -1}$\begin{enumerate}
\item If $m\geq 0$, then
\begin{eqnarray*}
     \mathcal L_m[L]=-(m+1)!\dpa{(1\cdot t^{m+1})}+E_m
\end{eqnarray*}
and also
\begin{eqnarray*}
     \mathcal D_m[L]=-(m+1)!\dpa{(d\bar z\cdot t^{m+1})}+Z_m
\end{eqnarray*}
which doesn't depend on the scale $L$.
\item If $m=-1$, the operators $\mathcal L_{-1}[L]$ will depend on the scale $L$. Let $Y[L]$ be the operator
\begin{eqnarray*}
     Y[L][\alpha]=\begin{cases}
     \int_0^L du \dbar^*\pa e^{-uH}\alpha & \alpha\in \PV_\Etau^{*,*}\\
     0 &\alpha\in t \PV_\Etau^{*,*}[[t]]
          \end{cases}
\end{eqnarray*}
and $\tilde Y[L]$ be the operator
\begin{eqnarray*}
     \tilde Y[L][\alpha]=\begin{cases}
     \int_0^L du \dbar^*\pa e^{-uH}( d\bar z\wedge\alpha) & \alpha\in \PV_\Etau^{*,*}\\
     0 &\alpha\in t \PV_\Etau^{*,*}[[t]]
          \end{cases}
\end{eqnarray*}
Recall that $S_3^{BCOV}\in \Sym^3(\E^\vee)$ is the local functional given by the order three component of the classical BCOV action.  Then we define $\mathcal L_{-1}[L]$  by
$$
        \mathcal L_{-1}[L]=-\dpa{(1)}+E_{-1}-Y[L]+{1\over \hbar}\dpa{(1)}S_3^{BCOV}
$$
and $\mathcal D_{-1}[L]$  by
$$
    \mathcal D_{-1}[L]=-\dpa{(d\bar z)}+Z_{-1}-\tilde Y[L]+{1\over \hbar}\dpa{(d\bar z)}S_3^{BCOV}
$$
Note that $\dpa{(1)}S_3^{BCOV}$ is precisely the Trace pairing.
\end{enumerate}

\end{defn}

\begin{lem}
The operators $\{\mathcal L_m[L], \mathcal D_m[L]\}_{m\geq -1}$ satisfy the Virasoro relations
\begin{eqnarray*}
          \bbracket{\mathcal L_m[L], \mathcal L_n[L]}&=&(m-n)\mathcal L_{m+n}[L]\\
           \bbracket{\mathcal L_m[L],\mathcal D_n[L]}&=&(m-n)\mathcal D_{m+n}[L]\\
           \bbracket{\mathcal D_m[L], \mathcal D_n[L]}&=&0
\end{eqnarray*}
for all $m,n\geq -1$ and for any $L$.
\end{lem}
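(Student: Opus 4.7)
The plan is to verify each relation by direct computation, splitting into the case where both indices are $\geq 0$ (so that the operators are scale-independent) and the case where one of them equals $-1$ (where the scale-dependent pieces $Y[L]$, $\tilde Y[L]$ together with the quantum correction $\tfrac{1}{\hbar}\dpa{(1)}S^{BCOV}_3$ enter).

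For $m,n\geq 0$ the whole computation is driven by the single Pochhammer identity
\begin{equation*}
(x+n)_{m+1}(x)_{n+1}\;-\;(x+m)_{n+1}(x)_{m+1}\;=\;(n-m)\,(x)_{m+n+1},
\end{equation*}
which I would prove by observing that each product has the factor $(x+n)$ (resp.\ $(x+m)$) appearing twice and so refactors as $(x+n)(x)_{m+n+1}$ (resp.\ $(x+m)(x)_{m+n+1}$). Specializing to $x=k+i$ and evaluating on $t^k\alpha\in \PV^{i,j}_\Etau[[t]]$ gives $[E_m,E_n]=(n-m)E_{m+n}$; because $d\bar z$ is passive under $E_m$, the very same identity also gives $[E_m,Z_n]=(n-m)Z_{m+n}$. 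The cross terms with the derivatives are computed from $E_n(1\cdot t^{m+1})=\tfrac{(m+n+1)!}{m!}\,1\cdot t^{m+n+1}$ (and the analogous formula with $Z_n$), which combined with its $m\leftrightarrow n$ counterpart produces exactly the factor $(m-n)(m+n+1)!$ multiplying $\dpa{(1\cdot t^{m+n+1})}$ needed to rebuild the derivative part of $\mathcal L_{m+n}[L]$. Assembling the four pieces gives $[\mathcal L_m,\mathcal L_n]=(m-n)\mathcal L_{m+n}$ and, by the parallel computation with $Z_n$, also $[\mathcal L_m,\mathcal D_n]=(m-n)\mathcal D_{m+n}$. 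The vanishing $[\mathcal D_m,\mathcal D_n]=0$ is automatic: since $d\bar z\wedge d\bar z=0$ we have $Z_nZ_m\equiv 0$ on $\PV_\Etau$, and the same identity forces $Z_\bullet(d\bar z\cdot t^\bullet)=0$, killing every cross commutator with $\dpa{(d\bar z\cdot t^\bullet)}$.

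The substantive case is when one of the indices equals $-1$. The relation $[\mathcal L_{-1}[L],\mathcal L_{-1}[L]]=0$ is trivial, and $[\mathcal D_{-1}[L],\mathcal D_m[L]]=0$ follows from the same $d\bar z\wedge d\bar z=0$ vanishing together with $\tilde Y[L](d\bar z\cdot t^\bullet)=0$. The remaining case $[\mathcal L_m[L],\mathcal L_{-1}[L]] = (m+1)\mathcal L_{m-1}[L]$ (and its $\mathcal D$-variant) expands into four kinds of contributions: a truncated Pochhammer piece $[E_m,E_{-1}]$, which a direct calculation shows equals $(m+1)E_{m-1}$; commutators of $E_m$ with $\dpa{(1)}$ and of $E_{-1}$ with $\dpa{(1\cdot t^{m+1})}$, which reassemble into the derivative part of $\mathcal L_{m-1}[L]$; a commutator $[E_m,Y[L]]$ with the scale-dependent homotopy; and a commutator of $E_m$ with the quantum-corrected term $\tfrac{1}{\hbar}\dpa{(1)}S^{BCOV}_3$. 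The first two items go through exactly as in the scale-independent case. The main obstacle is that the last two must precisely cancel in order to preserve scale independence of the right-hand side for $m\geq 1$ (and to reproduce $\mathcal L_{-1}[L]$ itself when $m=0$).

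To prove this scale-dependent cancellation, my strategy is to differentiate the proposed identity in $L$. Since $\pa_L Y[L] = \dbar^*\pa\, e^{-L[\dbar,\dbar^*]}$ is exactly the integrand of the regulated propagator $\PP_\eps^L$, the $L$-derivative of $[E_m,Y[L]]$ can be matched, via a computation in the spirit of Lemma/Definition~\ref{deformed BV}, against the $L$-derivative produced by the $S^{BCOV}_3$ piece through the BV bracket. Once the $L$-dependence is shown to drop out, the identity reduces to its limit at $L=0$, which is precisely the classical string-equation compatibility $\pa_{Str}[0]\,S^{BCOV} = \dpa{(1)}S^{BCOV}_3$ recalled immediately after the definition of the string vector field. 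The identical argument, with $Z_\bullet$, $\tilde Y[L]$ and $\dpa{(d\bar z)}S^{BCOV}_3$ in place of their unwedged counterparts, handles the $[\mathcal L_m[L],\mathcal D_{-1}[L]]$ relation.
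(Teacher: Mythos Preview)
The paper states this lemma without proof, so there is nothing to compare your argument against directly; I evaluate it on its own merits.

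Your treatment of the case $m,n\geq 0$ via the Pochhammer identity is correct in substance (modulo the standard sign flip when a linear operator on fields is promoted to a derivation on functionals, which you do not mention but which is needed to turn your $(n-m)$ into the required $(m-n)$).

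The genuine gap is in the case involving $\mathcal L_{-1}[L]$. You assert that $[E_m,Y[L]]$ and $[E_m,\tfrac{1}{\hbar}\dpa{(1)}S_3^{BCOV}]$ must \emph{cancel}, and propose to establish this by differentiating in $L$ and reducing to the classical string equation $\pa_{Str}[0]S^{BCOV}=\dpa{(1)}S_3^{BCOV}$. This cannot work. First, the two terms live in different powers of $\hbar$ (the second carries $\hbar^{-1}$, the first does not), so they can never cancel one another; each must vanish separately. Second, the classical string equation is a statement about how $\pa_{Str}[0]$ acts on $S^{BCOV}$, not a commutation relation among the $\mathcal L_m$'s; it is not the $L\to 0$ limit of the identity you are trying to prove.

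What actually happens is much simpler and is specific to the one--dimensionality of $\Etau$. Since the polyvector index $i$ takes only the values $0$ or $1$, the operator $Y[L]$ (which factors through $\pa$) has image in $\PV^{0,*}$, on which $E_m$ acts by the scalar $(0)_{m+1}=0$; hence $E_mY[L]=0$ for every $m\geq 0$, while $Y[L]E_m=0$ for $m\geq 1$ because $E_m$ raises the $t$--degree. Likewise $E_m$ applied to the trace pairing $\dpa{(1)}S_3^{BCOV}$ vanishes for $m\geq 1$ (as $E_m$ maps $t^0\PV$ into $t^m\PV$) and reproduces the pairing itself for $m=0$ (since $\Tr$ is supported in total polyvector degree $i_1+i_2=1$). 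Thus for $m\geq 1$ both ``extra'' commutators are individually zero, and for $m=0$ they reproduce exactly the $-Y[L]+\tfrac{1}{\hbar}\dpa{(1)}S_3^{BCOV}$ part of $\mathcal L_{-1}[L]$ on the right--hand side. No BV machinery or $L$--differentiation is required; the lemma is a direct linear--algebra check.
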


\begin{thm}[\cite{Si-Kevin}]\label{BCOV-virasoro-equations}
The quantization $\F^\Etau[L]$ of BCOV theory on the elliptic curve at $L=\infty$ satisfies the following Virasoro equations
\begin{align*}
    \mathcal L_m[\infty] e^{\F^\Etau[\infty]/\hbar}=\mathcal D_m[\infty] e^{\F^\Etau[\infty]/\hbar}=0 \ \ \mbox{on}\ \ \HH^*\bracket{\E_\Etau, Q}
\end{align*}
for any $m\geq -1$.
\end{thm}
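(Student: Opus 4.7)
The plan is to prove the Virasoro equations by the homotopical obstruction method already used for the dilaton and string axioms in \cite{Si-Kevin}, combined with the chiral description of the deformation-obstruction complex from Lemma \ref{chiral-functional}.

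First, I would promote each Virasoro equation to a homotopy statement at finite scale. With $\delta$ a formal parameter of cohomological degree zero satisfying $\delta^2 = 0$, the goal is to produce families $\G_m[L], \tilde\G_m[L] \in \hbar\,\OO(\E_\Etau)[[\hbar]]$ such that
\begin{align*}
  (Q + \hbar\Delta_L + \delta\,\mathcal L_m[L])\,e^{\F^\Etau[L]/\hbar + \delta \G_m[L]/\hbar} &= 0,\\
  (Q + \hbar\Delta_L + \delta\,\mathcal D_m[L])\,e^{\F^\Etau[L]/\hbar + \delta \tilde\G_m[L]/\hbar} &= 0,
\end{align*}
together with the appropriate renormalization group flow equation for the deformed quantization $\F^\Etau[L] + \delta\G_m[L]$. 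Once such homotopies are in hand, the limit $L \to \infty$ kills the $\hbar\Delta_L$ term; reading the $\delta^1$ part of each equation then shows that $\mathcal L_m[\infty] e^{\F^\Etau[\infty]/\hbar}$ and $\mathcal D_m[\infty] e^{\F^\Etau[\infty]/\hbar}$ are $Q$-exact, hence vanish on $\HH^*(\E_\Etau, Q)$.

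Second, I would verify the two algebraic compatibilities this setup requires: each $\mathcal L_m[L]$ and $\mathcal D_m[L]$ must commute with the renormalization group flow operator $e^{\hbar\pa_{\PP_\epsilon^L}}$ and with the BV differential $Q + \hbar\Delta_L$, modulo correction terms absorbable into $\G_m[L]$. For $m \geq 0$ these operators are scale-independent and the checks reduce to straightforward bookkeeping with the heat kernel. For $m = -1$ the scale-dependent terms $Y[L]$ and $\tilde Y[L]$ are precisely designed to enforce these compatibilities, exactly as in the string equation argument already sketched in the paper. These compatibilities reduce the construction of $\G_m[L]$ and $\tilde\G_m[L]$ genus by genus to the vanishing of an obstruction class in the deformation-obstruction complex $(\Ol(\E_\Etau), Q + \{S^{BCOV}, -\})$, of cohomological degree and Hodge weight prescribed by the given Virasoro operator.

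Third, I would kill the obstructions cohomologically. By Lemma \ref{chiral-functional}, the translation-invariant deformation-obstruction complex is quasi-isomorphic to a small chiral complex built from $\g^{hol}_{\E_\Etau} = \C[[z, \pa_z, t]][1]$, while Proposition \ref{holomorphic derivatives} bounds the number of holomorphic derivatives appearing at each genus. Together these reduce the relevant obstruction groups to explicit finite-dimensional vector spaces controlled by Hodge weight and cohomological degree, which can be shown to vanish by inspection, parallel to the analogous computation for the dilaton axiom carried out in \cite{Si-Kevin}. Finally, one may bootstrap: the cases $\mathcal L_{-1}, \mathcal D_{-1}$ are the string equations already known, $\mathcal L_0$ is closely tied to the dilaton equation, and the Virasoro relations $[\mathcal L_m, \mathcal L_n] = (m-n)\mathcal L_{m+n}$ and $[\mathcal L_m, \mathcal D_n] = (m-n)\mathcal D_{m+n}$ propagate the homotopies from a finite set of low-order generators to all $m \geq -1$. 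The main obstacle is exactly the chiral cohomological vanishing in the previous step: the chiral reduction and the conformal bound on holomorphic derivatives make the computation finite-dimensional, but organizing it uniformly in $m$ and in the genus $g$ requires carefully tracking the Hodge weight shifts of $\mathcal L_m$ and $\mathcal D_m$, and it is here that the one-dimensionality of $\Etau$ is essential.
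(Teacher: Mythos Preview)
The paper does not contain a proof of this theorem: it is stated with the attribution \cite{Si-Kevin} and no argument is given here. There is therefore nothing in the present paper to compare your proposal against.

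That said, your outline is a reasonable reconstruction of the obstruction-theoretic strategy one would expect in \cite{Si-Kevin}, patterned on the dilaton and string axioms already discussed in this paper. A few remarks. First, the bootstrap via the Virasoro bracket is legitimate on $Q$-cohomology at $L=\infty$ once you know each $\mathcal L_m[L]$ and $\mathcal D_m[L]$ commutes with $Q+\hbar\Delta_L$: if $\mathcal L_i e^{\F/\hbar}$ is $Q$-exact and $[\mathcal L_j,Q]=0$, then $\mathcal L_j\mathcal L_i e^{\F/\hbar}$ is again $Q$-exact, so the commutator relation propagates the vanishing. This reduces the problem to a finite set of generators (e.g.\ $m=-1,1,2$), as you say. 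Second, your invocation of Proposition~\ref{holomorphic derivatives} to bound holomorphic derivatives is slightly circular as stated: that proposition concerns the quantization $\F^{\Etau}$ itself, not the obstruction cocycles arising in the Virasoro homotopies, so you would need a separate argument that the obstruction classes at each genus are likewise chiral of bounded degree. This is plausible given Lemma~\ref{chiral-functional} and the structure of the operators $E_m,Z_m$, but it is the step where the actual work lies, and your sketch does not indicate how the degree and Hodge-weight count forces vanishing.
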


\section{Higher Genus Mirror Symmetry on elliptic curves}\label{section-proof}

Mirror symmetry is a duality between symplectic geometry of Calabi-Yau manifolds (A-model) and complex geometry of the mirror Calabi-Yau manifolds (B-model). In the one-dimension case, i.e. elliptic curves, the mirror map is simple to describe. Let $E$ represent an elliptic curve. In the A-model, we have the moduli of (complexified) K\"{a}hler class $[\omega]\in \HH^2(E, \C)$ parametrized by the (complexified) symplectic volume
$$
    q=\int_E \omega
$$

In the B-model, we have the moduli of inequivalent complex structures which is identified with $\mathbb H/SL(2,\Z)$. Here  $\mathbb H$ is the upper-half plane, and we represent the elliptic curve $\Etau$ as $\C/ (\Z\oplus \Z \tau)$ and identify $\tau$ in $\mathbb H$ under the modular transformation
\begin{align*}
   \tau\to {A\tau+B\over C\tau+D}, \ \ \mbox{for}\ \gamma\in \begin{pmatrix} A& B\\ C& D \end{pmatrix} \in SL(2, \Z)
\end{align*}

The mirror map simply identifies the pair $(E, q)$ with the pair $(\Etau, \tau)$ via
\begin{align*}
             q=e^{2\pi i \tau}
\end{align*}
and mirror symmetry predicts the equivalence between the Gromov-Witten theory of $E$ in the A-model and certain quantum invariants of $\Etau$ in the B-model. We will show in this section that the quantum invariants in the B-model are precisely BCOV invariants constructed from the quantization of the classical BCOV action in the previous section. We prove that the BCOV invariants can be identified with the generating function of descendant Gromov-Witten invariants of the mirror elliptic curve, to all genera.  This established the higher genus mirror symmetry on elliptic curves, as originally proposed in \cite{BCOV}. More precisely, let
$$
   \tilde \omega\in \HH^2(E, \C)
$$
be the class of the Poincare dual of a point. Let $k_1,\cdots, k_n$ be non-negative integers. We consider the following generating function of descendant Gromov-Witten invariants
\begin{align}\label{GW partition}
       \sum_{d\geq 0}q^d\left\langle \prod_{i=1}^n\tau_{k_i}(\tilde \omega) \right\rangle_{g,d}=\sum_{d\geq 0}
       \int_{[\overline{M}_{g,n}(E,d)]^{vir}} \prod_{i=1}^n \psi_i^{k_i}ev_i^*(\tilde \omega)
\end{align}
where $\overline{M}_{g,n}(E,d)$ is the moduli space of stable degree $d$ maps from genus $g$, $n$-pointed curves to $E$, and $ev_i$ is the evaluation map at the $i$th marked point. It's proved in \cite{Hurwitz} that (\ref{GW partition}) is a quasi-modular form in $\tau$ of weight $\sum\limits_{i=1}^n (k_i+2)$  under the identification $q=\exp(2\pi i \tau)$. \\

  In the B-model, let $\F^{\Etau}[L]=\sum\limits_{g\geq 0}\hbar^g \F^{\Etau}_g[L]$ be the effective functional on the polyvector fields $\E_\Etau=\PV^{*,*}_{\Etau}[[t]][2]$ of the elliptic curve $\Etau$ constructed in the previous section. As explained in definition \ref{higher-genus-B-invariants}, the higher genus quantum invariants are obtained via the limit $L\to \infty$
  $$
  \F^{\Etau}_g[\infty]:  \Sym^n\bracket{\HH^*(\Etau, \wedge^* T_{\Etau})[[t]]} \to \C
$$
Let $w$ be the linear coordinate on $\C$. We consider the following polyvector fields
\begin{align*}
   \omega= {i\over 2\im\ \tau}\pa_w\wedge d\bar w
\end{align*}
which is normalized such that $\Tr\ \omega\equiv\int_{\Etau} (\omega\vee dw)\wedge dw=1$. We consider
\begin{align}\label{Fg}
      \F^{\Etau}_g[\infty][t^{k_1}\omega,\cdots, t^{k_n}\omega]
\end{align}

We will prove that it is an almost holomorphic modular form of weight $\sum\limits_{i=1}^n (k_i+2)$. Therefore the following limit makes sense \cite{almost-modular-form}
\begin{align*}
   \lim\limits_{\bar\tau\to \infty}F^{\Etau}_g[\infty][t^{k_1}\omega,\cdots, t^{k_n}\omega]
\end{align*}
which gives a quasi-modular form with the same weight. The main theorem in this section is the following
\begin{thm}\label{main theorem} For any genus $g\geq 2$, $n>0$, and non-negative integers $k_1, \cdots, k_n$, we have the identity
\begin{eqnarray}
\sum_{d\geq 0}q^d\left\langle \prod_{i=1}^n\tau_{k_i}(\tilde \omega) \right\rangle_{g,d}=\lim\limits_{\bar\tau\to \infty}F^{\Etau}_g[\infty][t^{k_1}\omega,\cdots, t^{k_n}\omega]
\end{eqnarray}
under the identification $q=\exp(2\pi i\tau)$.
\end{thm}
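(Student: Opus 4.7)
The plan is to rewrite both sides of the claimed identity as traces on the Fock space of a single chiral bosonic vertex algebra and then to match them using the shared modularity together with the Virasoro, string, and dilaton constraints.

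\medskip

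For the B-model side I would exploit the chirality results of Section \ref{section-BCOV}. Theorem \ref{admissible-quantization} together with Proposition \ref{holomorphic derivatives} exhibits $\F^{\Etau}[L]$ as the effective theory of an admissible action $S=\sum_g \hbar^g S_g$ in which each $S_g$ is translation-invariant, chiral, carries exactly one $d\bar z$ per input, and exactly $2g$ holomorphic derivatives. Evaluated on the harmonic representative $\omega=\tfrac{i}{2\op{Im}\tau}\pa_w\wedge d\bar w$, the Feynman sum for $\F^{\Etau}_g[\infty][t^{k_1}\omega,\cdots,t^{k_n}\omega]$ reduces to integrals over $\Etau^n$ of products of the $L=\infty$ chiral propagator (the standard Kronecker--Eisenstein Green kernel on the torus) contracted into vertices that differentiate only in $z$. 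The antiholomorphic dependence enters solely through the $1/\op{Im}\tau$ normalization of $\omega$, so $\lim_{\bar\tau\to\infty}$ extracts a purely holomorphic $q$-series in $q=e^{2\pi i\tau}$. Reorganizing each $q$-coefficient as a torus contour integral of a holomorphic $n$-point function and using the standard identification of such integrals with Fock-space traces, I would recognize the result as a trace on the bosonic Fock space $V$ of a product of explicit vertex operators built from the free chiral boson $\pa\varphi$.

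\medskip

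For the A-model side I would invoke the fermionic presentation of the stationary descendant GW theory on elliptic curves from \cite{virasoro}: the series $\sum_{d\geq 0}q^d\abracket{\prod\tau_{k_i}(\tilde\omega)}_{g,d}$ is a trace on the infinite wedge of a product of Okounkov's descendent $\mathcal E$-operators. Applying the boson--fermion correspondence for the lattice vertex algebra $V_{\mathbb Z}$ rewrites this fermionic trace as a trace on the same bosonic Fock space $V$ of an explicit product of bosonic vertex operators.

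\medskip

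To match the two bosonic traces I would combine modularity with the common symmetry tower. Both sides are (quasi-)modular of weight $\sum_i(k_i+2)=2g-2+2n$: for the A-side by \cite{Hurwitz}, and for the B-side from the conformal rescaling of Proposition \ref{holomorphic derivatives} together with the modular behaviour of the chiral propagator. Both satisfy identical string and dilaton equations, and Virasoro equations of matching shape (Theorem \ref{BCOV-virasoro-equations} on the B-side; the GW Virasoro of \cite{virasoro} on the A-side). Since quasi-modular forms of each fixed weight form a finite-dimensional space and Virasoro $+$ string $+$ dilaton is integrable in the Witten--Kontsevich sense, these constraints propagate agreement by induction on $2g-2+n$ once one matches low-order data, namely the classical stationary Yukawa coupling (which by construction of $S^{BCOV}$ reproduces the A-model intersection of point classes) together with the genus-one one-point function. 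The principal obstacle is the explicit operator-level identification of the two bosonic vertex-operator families. By the remark following Lemma/Definition \ref{deformed BV}, the deformed bracket $\{-,-\}'$ is literally the OPE of the CFT fields built from BCOV vertices, so one must check that these OPEs, together with the normalization of the vacuum on $V$, agree with those of the image of the Okounkov descendent $\mathcal E$-operators under boson--fermion correspondence. Once this vertex-algebra matching is secured, the shared modularity and the common Virasoro/string/dilaton tower force equality for every $(g,n)$ with $2g-2+n>0$.
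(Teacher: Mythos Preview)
Your broad architecture is right and matches the paper: bosonize both sides and reduce the theorem to an identification of the two families of chiral vertex operators. But the mechanism you propose for that identification does not work, and the paper's actual argument is quite different from what you sketch.

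The gap is in your last paragraph. You propose to close the argument with Virasoro, string, dilaton, and modularity. On the elliptic curve, Virasoro only reduces general descendants to the stationary sector; once you are inside the stationary sector (pure $\omega$ inputs), string and dilaton give you nothing, since those equations involve insertions of $1$ and $t\cdot 1$, not of $\omega$. Finite-dimensionality of the quasi-modular ring at fixed weight likewise does not pin down the answer without independent input at every weight. So your proposed induction on $2g-2+n$ has no engine. Your fallback---``check that the OPEs agree''---is where all the content lies, and you have not said how to do it.

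What the paper actually does is the following. On the B-side, cohomological localization replaces $\omega$ by Poincar\'e duals of parallel $A$-cycles, so $\lim_{\bar\tau\to\infty}\F_g^{\Etau}[\infty][t^{k_1}\omega,\dots]$ becomes a Feynman sum with the same BCOV propagator and with chiral vertices $\int_C dw\,\mathcal J^{(k)}$. The quantum master equation on $\mathbb C^*$ then forces the $\mathcal J^{(k)}$'s to satisfy a \emph{commutativity property}: the contour integrals $\oint \mathcal J^{(k)}$ mutually commute as operators on the bosonic Fock space. The key step (Lemmas~\ref{uniqueness} and~\ref{existence}) is a purely algebraic uniqueness statement: a family of densities $\mathcal J^{(k)}\in\mathbb C[\alpha,\alpha^{(1)},\dots]/\operatorname{im}D$ of the correct degree, with classical limit $\tfrac{1}{(k+1)!}\alpha^{k+2}$, satisfying this commutativity is unique. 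On the A-side, the bosonized Okounkov operators $\mathcal L^{(k)}$ visibly commute because in the fermionic picture they are already simultaneously diagonalized on the standard basis. Uniqueness then gives $\mathcal J^{(k)}=\mathcal L^{(k+1)}$, and the two Feynman expansions coincide term by term. The integrable-hierarchy flavor you allude to is present, but it enters as this commuting-flows rigidity for the vertex densities, not through Virasoro.
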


It should be noted that the mysterious $\bar\tau\to\infty$ limit appears in \cite{BCOV} to describe the holomorphic anomaly and the large radius limit behavior of the topological string amplitudes. It's argued by physics method \cite{BCOV} that the quantum invariants constructed from Kodaira-Spencer gauge theory on Calabi-Yau manifolds can be identified with the Gromov-Witten invariants of its mirror Calabi-Yau under such limit. In general, the holomorphic anomaly arises from the choice a metric in Lemma \ref{isomorphism-polyvectors} to split the Hodge filtration, and the $\bar\tau\to \infty$ limits refers to taking the monodromy splitting filtration around large complex limit. This is discussed in detail in \cite{Si-Kevin}.

In our example of elliptic curves, the $\bar\tau\to\infty$ limit simply intertwines between the almost holomorphic modular forms and quasi-modular forms. This has also been observed in \cite{string-modular-form} in the study of local mirror symmetry.

In Theorem \ref{main theorem}, we only consider the input from $\HH^2(\Etau, \C)$ and its descendants, which is called stationary sector in \cite{Hurwitz}. In fact, descendant Gromov-Witten invariants with arbitrary inputs on $E$ can be obtained from the stationary sector via Virasoro equations \cite{virasoro}. Since we have proved that the same Virasoro equations hold for BCOV theory (see Theorem \ref{BCOV-virasoro-equations}), it follows from Theorem \ref{main theorem} that mirror symmetry actually holds for arbitrary inputs.

The rest of this section is devoted to prove Theorem \ref{main theorem}. We outline the structure as follows. In section \ref{mirror-propagator}, we analyze the BCOV propagator and give several equivalent descriptions that will be used. In section \ref{mirror-boson-fermion}, we briefly review the Boson-Fermion correspondence in the theory of lattice vertex algebra. In section \ref{mirror-Gromov-Witten}, we use Boson-Fermion correspondence to show that the partition function (\ref{GW partition}), computed in \cite{Hurwitz}, can be written as Feynman graph integrals with the BCOV propagator. In section \ref{mirror-limit}, we prove that (\ref{Fg}) is an almost holomorphic modular form and analyze the $\bar\tau\to \infty$ limit. In section \ref{mirror-proof}, we prove Theorem \ref{main theorem}.

\subsection{BCOV propagator on the elliptic curve}\label{mirror-propagator}
Let $\Etau=\mathbb C/\Lambda$ be the elliptic curve where $\Lambda=\mathbb Z+\mathbb Z \tau$, $\tau$ lies in the upper half plane. We will use the following convention for coordinates: let $w$ be the linear coordinate on $\mathbb C$, and the elliptic curve $\Etau$ is obtained via the identification $w\sim w+1, w\sim w+\tau$. We will denote by
\begin{align*}
q=e^{2\pi i\tau}
\end{align*}
and also use the $\C^*$ coordinate
\begin{align*}
    z=\exp(2\pi i w)
\end{align*}
such that $z\sim z q$ on the elliptic curve. We choose the standard flat metric on $\Etau$, and let $\Box$ be the Laplacian. The BCOV propagator is given by the kernel $\PP_\epsilon^L=\int_\epsilon^L du \bar\pa^*\pa e^{-u\Box}$, which is concentrated on $\PV^{0,0}_{\Etau}$ component. We normalize the integral such that $\PP_\epsilon^L$ is represented by
\begin{align*}
       \PP_\epsilon^L(w_1,w_2;\tau,\bar\tau)=-{1\over \pi}\int_\epsilon^L {du\over 4\pi u} \sum_{\lambda\in \Gamma} \left( {\bar w_{12}-\bar \lambda\over 4 u} \right)^2 e^{-{|w_{12}-\lambda|^2/4u}}
\end{align*}
where $w_{12}=w_1-w_2$. Note that it differs from the standard kernel by a factor ${{1\over \pi}}$. This factor is purely conventional and this choice will be convenient for the later discussion. Let $E_2(\tau)$ be the second Eisenstain series which is a quasi-modular form of weight 2
\begin{eqnarray*}
  E_2(\tau)={3\over \pi^2}\sum\limits_{n\in \Z}\sum^\prime\limits_{m\in \Z}{1\over (m+n\tau)^2}
  =1-24\sum_{n=1}^\infty {nq^n\over 1-q^n}
\end{eqnarray*}
where the sign $\sum\limits^\prime$ indicates that $(m,n)$ run through all $m\in \Z, n\in \Z$ with $(m,n)\neq (0,0)$. $E_2^*(\tau,\bar\tau)$ is the almost holomorphic modular form defined by
$$
E_2^*(\tau,\bar\tau)=E_2(\tau)-{3\over \pi \Im \tau}
$$

Note that $E_2(\tau)$ can be recovered from $E_2^*(\tau, \bar\tau)$ by taking the limit $\bar\tau\to \infty$ in the obvious sense.

\begin{lem}
 Under the limit $\epsilon\to 0, L\to \infty$, we have
 \begin{align}\label{limit propagator}
 \PP_0^\infty(w_1,w_2;\tau,\bar\tau)=-{1\over 4\pi^2}\wp(w_1-w_2;\tau)-{1\over 12}E_2^*(\tau,\bar\tau)
 \end{align}
 if $w_1-w_2\notin \Lambda$. Here $\wp(w;\tau)$ is Weierstrass's elliptic function
$$
    \wp(w;\tau)={1\over w^2}+\sum_{\lambda\in \Lambda, \lambda\neq
    0}\left({1\over (w-\lambda)^2}-{1\over \lambda^2} \right)
$$
\end{lem}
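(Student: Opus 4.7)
The plan is to recognize the integrand as the second holomorphic derivative of the scalar heat kernel on $\Etau$, to take the $\epsilon\to 0$ and $L\to\infty$ limits using standard Hodge theory on the torus, and then to match the answer with $\wp$ plus an $E_2^*$ correction via the classical relation between Jacobi $\theta_1$ and the Weierstrass $\sigma$-function.

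First I would use the pointwise identity $\pa_{w_1}^2\, e^{-|w_{12}-\lambda|^2/4u} = \bracket{(\bar w_{12}-\bar\lambda)/4u}^2 e^{-|w_{12}-\lambda|^2/4u}$ to rewrite
$$\PP_\epsilon^L(w_1,w_2;\tau,\bar\tau)=-{1\over \pi}\,\pa_{w_1}^2\int_\epsilon^L du\,K_u(w_{12};\tau,\bar\tau),$$
where $K_u(z)={1\over 4\pi u}\suml_{\lambda\in\Lambda}e^{-|z-\lambda|^2/4u}$ is the scalar heat kernel on $\Etau$; for $z\notin\Lambda$ and $\epsilon,L\in(0,\infty)$ the defining sum is absolutely convergent, so swapping $\pa_{w_1}^2$ with the sum and $u$-integral is legitimate. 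Using the spectral decomposition $K_u = 1/\Im\tau + O(e^{-cu})$ as $u\to\infty$, together with the fact that $\pa_z^2$ kills the constant zero-mode, both limits $\epsilon\to 0$ and $L\to\infty$ exist away from $\Lambda$ and produce
$$\PP_0^\infty(z;\tau,\bar\tau)=-{1\over \pi}\,\pa_z^2\,\tilde G(z;\tau,\bar\tau),$$
where $\tilde G$ is the Green's function for the Laplacian on $\Etau$ with vanishing zero-mode, characterized by $\Delta \tilde G = \delta - 1/\Im\tau$.

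To finish, I would use the explicit formula $\tilde G(z;\tau,\bar\tau) = c_1\log|\theta_1(z;\tau)/\eta(\tau)|^2 + c_2 (\Im z)^2/\Im\tau$ (with $c_1,c_2$ fixed by the $\delta$-singularity at $z=0$ and by single-valuedness under $z\mapsto z+\tau$), together with the classical identity $\pa_z^2 \log\theta_1(z;\tau) = -\wp(z;\tau) - {\pi^2\over 3}E_2(\tau)$, which follows from the Weierstrass relation $\theta_1(z)\propto e^{-\pi^2 E_2(\tau)z^2/6}\sigma(z)$ and $(\log\sigma)^{\prime\prime} = -\wp$. Applying $\pa_z^2$, the holomorphic piece of $\tilde G$ reproduces $\wp(z;\tau)$ along with a multiple of $E_2(\tau)$, while the $(\Im z)^2/\Im\tau$ piece produces a $1/\Im\tau$-constant; after the overall factor $-1/\pi$, the two non-holomorphic contributions combine exactly into $-\tfrac{1}{12}E_2^*$ with $E_2^* = E_2 - 3/(\pi\Im\tau)$, yielding \eqref{limit propagator}.

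The main obstacle is the careful bookkeeping of constants in $\tilde G$, in the $\theta_1/\sigma$ relation, and in the $(\Im z)^2$ coefficient of $\tilde G$, and verifying that the two non-holomorphic contributions combine precisely into $-\tfrac{1}{12}E_2^*$. Conceptually the mechanism is transparent: formally swapping the $\lambda$-sum and $u$-integral in the heat-kernel representation (via the substitution $s=|z-\lambda|^2/4u$) would produce the only conditionally convergent lattice sum $-{1\over 4\pi^2}\suml_\lambda(z-\lambda)^{-2}$, and the role of the $(\Im z)^2/\Im\tau$ piece of $\tilde G$ (which is forced on us by single-valuedness) is exactly to implement the symmetric Gaussian regularization of this sum, which is what upgrades the quasi-modular $E_2$ to the almost-holomorphic $E_2^*$ and is the seed of the holomorphic anomaly in the B-model.
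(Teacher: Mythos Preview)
Your proposal is correct. The paper itself does not prove this lemma at all: it simply states ``This is a well-known result'' and cites \cite{Li-modular} for an elementary proof. So there is nothing to compare at the level of argument.

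Your route---recognizing the integrand as $\pa_{w_1}^2$ of the scalar heat kernel, integrating in $u$ to the zero-mode-subtracted Green's function $\tilde G$, and then using the explicit formula $\tilde G = c_1\log\abs{\theta_1/\eta}^2 + c_2(\Im z)^2/\Im\tau$ together with $\pa_z^2\log\theta_1 = -\wp - \tfrac{\pi^2}{3}E_2$---is the standard one and works cleanly. The constants do close up: with the heat-kernel normalization in the paper one finds $c_1=-\tfrac{1}{4\pi}$ and $c_2=\tfrac{1}{2}$, and then $-\tfrac{1}{\pi}\pa_z^2\tilde G$ reproduces $-\tfrac{1}{4\pi^2}\wp - \tfrac{1}{12}E_2 + \tfrac{1}{4\pi\Im\tau} = -\tfrac{1}{4\pi^2}\wp - \tfrac{1}{12}E_2^*$, as desired. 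The only place to be careful is the sign convention for the Laplacian (the paper's $[\dbar,\dbar^*]$ versus $-4\pa_z\pa_{\bar z}$) when matching $\int_0^\infty(K_u-1/\mathrm{Vol})\,du$ to $\tilde G$; once that is fixed the rest is mechanical.
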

\begin{proof} This is a well-known result. See for example \cite{Li-modular} for an elementary proof.
\end{proof}
We will use the following notation to represent the $\bar\tau\to\infty$ limit, which we simply throw away the term involving $1\over \Im \tau$
\begin{eqnarray*}
      \PP_0^\infty(w_1,w_2;\tau,\infty)&\equiv&\lim_{\bar\tau\to\infty} \PP_0^\infty(w_1,w_2;\tau,\bar\tau)\nonumber\\&=&-{1\over 4\pi^2}\wp(w_1-w_2;\tau)-{1\over 12}E_2(\tau)\nonumber\\
      &=&-{1\over 4\pi^2}\sum_{n\in \Z}\sum_{m\in \Z}{1\over (w_1-w_2-(m+n\tau))^2}\label{limit of propagator}
\end{eqnarray*}
or simply $\PP_0^\infty(\tau,\infty)$ if no explicit coordinates are needed. We can also go to the $\C^*$-coordinate $z$ using the formula
$$
       \sum_{m\in \mathbb Z} {1\over (w+m)^2}=-4\pi^2 {z\over (1-z)^2}, \ \ z=\exp\left(2\pi i w\right)
$$
hence
$$
  \PP_0^\infty(w_1,w_2;\tau,\infty)=\sum_{n\in \Z}{z_1z_2q^n\over (z_1-z_2q^n)^2}, \ \ z_k=\exp\left(2\pi i w_k\right), k=1,2
$$

If we further assume that $w_1, w_2$ takes values in $\{a+b\tau|0\leq a,b<1\}$, then we have the following relation
$$
     |qz_2|<|z_1|<|q^{-1}z_2|
$$
and we get the power series expression
\begin{align}\label{BCOV propagator}\ \ \ \ \ \
\PP_0^\infty(w_1,w_2;\tau,\infty)={z_1z_2\over (z_1-z_2)^2}+\sum_{m\geq 1}{mz_1^mz_2^{-m}q^m\over 1-q^m}+
\sum_{m\geq 1}{m z_1^{-m}z_2^mq^m\over 1-q^m}
\end{align}

Later we will use this formula to give the Feynman diagram interpretation of the Gromov-Witten invariants on the elliptic curve.

\subsection{Boson-Fermion correspondence}\label{mirror-boson-fermion}
In this section, we discuss some examples of vertex algebra as well as their representations. We collect the basic results on Boson-Fermion correspondence that will be used to prove mirror symmetry. For more details, see \cite{vertex-algebra}\cite{soliton}.

\subsubsection{Free bosons}

The system of free boson is described by the infinite dimensional Lie algebra with basis $\{\alpha_n\}_{n\in \mathbb Z}$ and the commutator relations
\begin{align*}
     [\alpha_n,\alpha_m]=n \delta_{n+m,0}, \ \ n,m\in \Z
\end{align*}

The irreducible representations $\{\HH^B_p\}_{p\in \R}$ are indexed by the real number $p$ called ``momentum''. For each $\HH^B_p$, there exists an element $|p\rangle\in \HH^B_p$, which we call ``vacuum'', satisfying
\begin{align*}
      \alpha_0 |p\rangle =p |p\rangle , \ \alpha_{n}|p\rangle =0, \ n>0
\end{align*}
and the whole Fock space $\HH^B_p$ is given by
\begin{align*}
       \HH^B_p=\text{linear span of}\left\{\left.\alpha_{-i_1}^{k_1}\alpha_{-i_2}^{k_2}\cdots \alpha_{-i_n}^{k_n}|p\rangle \right| \ i_1>i_2>\cdots i_n>0, \ k_1,\cdots, k_n \geq 0, n\geq 0 \right\}
\end{align*}

We will be interested in the Fock space with zero momentum, where the vacuum vector is also annihilated  by $\alpha_0$
$$
   \alpha_n |0\rangle =0, \ \ \forall n\geq 0
$$
$\{\alpha_{-n}\}_{n>0}$ are called \emph{creation operators}, and $\{\alpha_{n}\}_{n>0}$ are called \emph{annihilation operators}.  We define the normal ordering $: :_B$ by putting all the annilation operators to the right,
\begin{align*}
     :\alpha_n\alpha_m:_B=\begin{cases}
        \alpha_n\alpha_m & \text{if}\ n\leq 0\\
       \alpha_m \alpha_n & \text{if}\ n>0
\end{cases}
\end{align*}
and similarly for the case with more $\alpha$'s. Here the subscript $``B"$ denotes the bosons in order to distinguish with the fermionic normal ordering that will be discussed later. It's useful to collect $\alpha_n$'s to form the following \emph{field}
\begin{align*}
   \alpha(z)=\sum_{n\in\mathbb Z}\alpha_n z^{-n-1}
\end{align*}
then we have the following relation
\begin{align*}
    \alpha(z)\alpha(w)=\sum_{n\geq 1}n z^{-n-1}w^{n-1}+:\alpha(z)\alpha(w):_B={1\over (z-w)^2}+ :\alpha(z)\alpha(w):_B, \ \ \text{if}\ |z|>|w|
\end{align*}
This provides a convenient way to organize the data of the operators and the normal ordering relations. We can construct the Virasoro operators acting on $\HH^B_p$ via normal ordering
\begin{align*}
   L_n={1\over 2}\sum_{i\in \mathbb Z}:\alpha_i \alpha_{n-i}:_B
\end{align*}
which satisfies the Virasoro algebra with central charge 1
\begin{align*}
   [L_m, L_n]=(m-n)L_{m+n}+{m^3-m\over 12}\delta_{m+n,0}, \ \ \forall n, m\in \Z
\end{align*}

If we consider the corresponding field
\begin{align*}
   L(z)=\sum_{n} L_n z^{-n-2}
\end{align*}
then we can write
\begin{align*}
     L(z)={1\over 2}:\alpha(z)^2:_B
\end{align*}
$L_0$ is called the ``energy operator'' and has the following expression
$$
    L_0={1\over 2}\alpha_0^2+\sum_{n\geq 1}\alpha_{-n} \alpha_n
$$
which acts on basis of $\HH^B_p$ as
$$
L_0  \alpha_{-i_1}^{k_1}\alpha_{-i_2}^{k_2}\cdots \alpha_{-i_n}^{k_n}|p\rangle=\left({1\over 2}p^2+\sum_{a=1}^n k_a i_a\right)  \alpha_{-i_1}^{k_1}\alpha_{-i_2}^{k_2}\cdots \alpha_{-i_n}^{k_n}|p\rangle
$$

The dual space $\HH^{B*}_p$ can be constructed similarly from the dual vacuum element $\langle p|\in \HH^{B*}_p$ such that
\begin{align*}
    \langle p| \alpha_0= p\langle p|, \ \langle p| \alpha_{-n}=0, n>0
\end{align*}
and
$$
 \HH^{B*}_p=\text{linear span of}\left\{\left.\langle p|\alpha_{i_n}^{k_n}\cdots\alpha_{i_2}^{k_2}\alpha_{i_1}^{k_1} \right| i_1>i_2>\cdots i_n>0, \ k_1,\cdots, k_n \geq 0, n\geq 0 \right\}
$$

The natural pairing
$$
\HH^{B*}_p\otimes  \HH^{B}_p \to \R
$$
is given by
$$
\langle p|\alpha_{j_m}^{l_m}\cdots\alpha_{j_2}^{l_2}\alpha_{j_1}^{l_1} \otimes \alpha_{-i_1}^{k_1}\alpha_{-i_2}^{k_2}\cdots \alpha_{-i_n}^{k_n}|p\rangle \to \langle p|\alpha_{j_m}^{l_m}\cdots\alpha_{j_2}^{l_2}\alpha_{j_1}^{l_1} \alpha_{-i_1}^{k_1}\alpha_{-i_2}^{k_2}\cdots \alpha_{-i_n}^{k_n}|p\rangle
$$
with the normalization condition
$$
     \langle p| |p\rangle=1
$$

There is a natural identification of the bosonic Fock space of integral momentum with
polynomial algebra $\mathbb C[z,z^{-1},x_1,x_2,\cdots]$ as follows. Let
$$
   H(x)=\exp\left(\sum_{n=1}^\infty x_n \alpha_n\right)
$$
then
$$
   \alpha_{-i_1}^{k_1}\alpha_{-i_2}^{k_2}\cdots \alpha_{-i_n}^{k_n}|m\rangle \to
   \sum_{l\in \mathbb Z}z^l\langle l|e^{H(x)}\alpha_{-i_1}^{k_1}\alpha_{-i_2}^{k_2}\cdots \alpha_{-i_n}^{k_n}|m\rangle \in \mathbb C[z,z^{-1},x_1,x_2,\cdots], \ \ m\in \Z
$$

Under this isomorphism the bosonic operators are represented by
$$
    \alpha_n \to {\partial\over \partial x_n}, \ \alpha_{-n}\to n x_n, \ \ n\geq 1
$$
and
$$
    \alpha_0\to z {\partial\over \partial z}
$$

\subsubsection{Free Fermions}
We consider the free fermionic system that is described by the infinite dimensional Lie superalgebra with odd basis $\{b_n\}, \{c_n\}$, indexed by $n\in \mathbb Z+1/2$, and the anti-commutator relations
\begin{align*}
   \{b_n, c_m\}=\delta_{m+n,0}, \ \{b_n, b_m\}=\{c_n,c_m\}=0, \ \ \forall n, m\in \Z+1/2
\end{align*}

The irreducible representation is given by the Fermionic Fock space $\HH^F$, which contains the vacuum $|0\rangle$ satisfying
\begin{align*}
   b_n |0\rangle =c_n |0\rangle =0, \ \forall n\in \mathbb Z^{\geq 0}+1/2
\end{align*}
and $\HH^F$ is constructed by
\begin{eqnarray*}
  \HH^F&=&\text{linear span of} \\ && \left\{\left.b_{-i_1}\cdots b_{-i_s}c_{-j_1}\cdots c_{-j_t}|0\rangle\right|
     0<i_1<i_2<\cdots<i_s, , 0<j_1<j_2<\cdots<j_t, \ s, t\geq 0\right \}
\end{eqnarray*}

The normal ordering $:\ :_F$ is defined similarly with extra care about the signs
\begin{align*}
    :b_n c_m:_F=\begin{cases}
                        b_n c_m & \text{if}\ n<0\\
                        -c_m b_n & \text{if}\ n>0
                       \end{cases}
\end{align*}
where the subscript $``F"$ refers to the fermions. We can also construct the Virasoro operators acting on $\HH^F$ via
\begin{align*}
   L_{n}={1\over 2}\sum_{k+l=n}(l-k):b_kc_l:_F=\sum_{k\in \mathbb Z+1/2}(n/2-k) :b_k c_{n-k}:_F, \ \ n\in \Z
\end{align*}
which satisfies the Virasoro algebra with central charge 1
\begin{align*}
   [L_m, L_n]=(m-n)L_{m+n}+{m^3-m\over 12}\delta_{m+n,0} \ \ \forall n,m\in \Z
\end{align*}

Similar to the bosonic case, we can collect the fermionic operators to form the fermionic fields
\begin{align*}
   b(z)=\sum_{n\in \mathbb Z+1/2}b_n z^{-n-1/2}, \ c(z)=\sum_{n\in \mathbb Z+1/2}c_n z^{-n-1/2}
\end{align*}
such that the normal ordering relations can be written in the simple form
\begin{align*}
   b(z)c(w)={1\over z-w}+:b(z)c(w):_F, \ \ \text{if}\ |z|>|w|
\end{align*}

The Virasoro operators can be collected
$$
      L(z)=\sum_{n\in Z}L_n z^{-n-2}
$$
and it's easy to see that
\begin{align*}
   L(z)={1\over 2}:\partial b(z) c(z):_F-{1\over 2}:b(z)\partial c(z):_F
\end{align*}

The energy operator $L_0$ has the expression
$$
   L_0=\sum_{k\in \mathbb Z^{\geq 0}+1/2}k (b_{-k}c_k+c_{-k}b_k)
$$

\subsubsection{From fermions to bosons}

Consider the above free fermionic system with fields $b(z), c(z)$. We construct the following bosonic field
\begin{align*}
    \alpha(z)=:b(z)c(z):_F
\end{align*}

In mode expansions,
\begin{align*}
     \alpha_{n}=\sum_{k\in\mathbb Z+1/2}:b_k c_{n-k}:_F
\end{align*}

It's easy to see that the following commutator relations hold as operators on $\HH^F$
\begin{equation*}
\begin{split}
            & [\alpha_m,\alpha_n]=m\delta_{m+n,0}\\
            & [\alpha_m, b_n]= b_{m+n}\\
            & [\alpha_m, c_n]=-c_{m+n}
\end{split}
\end{equation*}
Therefore $\alpha(z)$ defines a free bosonic field. Moreover, the Virasoro operators coincide for bosons and fermions, i.e.
\begin{align*}
     L(z)={1\over 2}:\alpha(z)^2:_B={1\over 2}:\partial b(z) c(z):_F-{1\over 2}:b(z)\partial c(z):_F
\end{align*}

Consider the charge operator $\alpha_0$, which corresponds to bosonic momentum operator
$$
    \alpha_0=\sum_{k\in\mathbb Z^{\geq 0}+1/2} (b_{-k} c_{k}-c_{-k}b_k )
$$
$\alpha_0$ acts on the basis of the Fock space as
$$
   \alpha_0 b_{-i_1}\cdots b_{-i_s}c_{-j_1}\cdots c_{-j_t}|0\rangle=\left(s-t\right) b_{-i_1}\cdots b_{-i_s}c_{-j_1}\cdots c_{-j_t}|0\rangle
$$
$\HH^F$ is decomposed into eigenvectors of $\alpha_0$
\begin{align*}
   \HH^F=\bigoplus\limits_{m\in \mathbb Z}\HH^F_m
\end{align*}
such that each $\HH^F_m$ gives a representation of the free bosons. For each $\HH^F_m$, there's a special element given by
\begin{align*}
    |m\rangle=\begin{cases}
                     |0\rangle & \text{if}\ m=0\\
                     b_{-m+{1/2}}\cdots b_{-{1/2}}|0\rangle & \text{if}\ m>0\\
                     c_{ m+{1/2}}\cdots c_{-{1/2}}|0\rangle & \text{if}\ m<0\\
                     \end{cases}
\end{align*}

It's easy to see that
$$
     \alpha_n |m\rangle=0, \ \forall n\in \mathbb Z^{>0}, \forall m\in \Z
$$

\begin{prop} The representation $\HH^F_m$ of free bosons is isomorphic to the Fock space $\HH^B_m$ with momentum $m\in \Z$ under the identification of vacuums
$$
|m\rangle\Leftrightarrow\begin{cases}
                     |0\rangle & \text{if}\ m=0\\
                     b_{-m+{1/2}}\cdots b_{-{1/2}}|0\rangle & \text{if}\ m>0\\
                     c_{ m+{1/2}}\cdots c_{-{1/2}}|0\rangle & \text{if}\ m<0\\
                     \end{cases}
$$
\end{prop}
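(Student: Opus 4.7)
The plan is to show that the map sending the bosonic vacuum of $\HH^B_m$ to the distinguished fermionic state $|m\rangle\in\HH^F_m$ extends uniquely to an isomorphism $\Phi_m\colon\HH^B_m\to\HH^F_m$ of Heisenberg representations. First I would verify that $|m\rangle$ satisfies the bosonic vacuum conditions. The identity $\alpha_0|m\rangle=m|m\rangle$ is immediate from the mode expansion $\alpha_0=\sum_{k\in\Z^{\geq 0}+1/2}(b_{-k}c_k-c_{-k}b_k)$ given in the excerpt: $|m\rangle$ is obtained by applying $|m|$ distinct fermionic creation modes to $|0\rangle$, the type ($b$ vs.\ $c$) recording the sign of $m$, so only the $(b_{-k}c_k-c_{-k}b_k)$ terms whose annihilation mode hits one of those creators contribute, each with eigenvalue $\pm 1$. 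For $n>0$ the assertion $\alpha_n|m\rangle=0$ follows by induction on the number of creation operators in $|m\rangle$, using the commutators $[\alpha_n,b_{-k}]=b_{n-k}$ and $[\alpha_n,c_{-k}]=-c_{n-k}$ from the excerpt together with $\alpha_n|0\rangle=0$ and $b_j|0\rangle=c_j|0\rangle=0$ for $j>0$.

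Once the vacuum conditions hold, the universal property of the bosonic Fock representation $\HH^B_m$ (built freely from a vacuum annihilated by positive modes and having the given $\alpha_0$-eigenvalue) together with the Heisenberg relations $[\alpha_m,\alpha_n]=m\delta_{m+n,0}$ on $\HH^F$, already verified in the excerpt, produces the intertwiner $\Phi_m$. Injectivity follows from irreducibility of $\HH^B_m$: any non-zero state can be mapped back to the vacuum by polynomial action of the $\alpha_n$ with $n>0$, so a non-trivial kernel would force $|m\rangle$ into the kernel as well, contradicting $|m\rangle\neq 0$. The remaining task is surjectivity, which I would handle by a character comparison. A direct trace over the evident fermionic basis yields
\begin{align*}
\Tr_{\HH^F}\bracket{y^{\alpha_0}q^{L_0}}=\prod_{k\in\Z^{\geq 0}+1/2}\bracket{1+yq^k}\bracket{1+y^{-1}q^k},
\end{align*}
where the $L_0$-eigenvalue of each basis element is computed from $L_0=\sum_{k}k(b_{-k}c_k+c_{-k}b_k)$. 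The Jacobi triple product identity rewrites this expression as $\prod_{n\geq 1}(1-q^n)^{-1}\sum_{m\in\Z}y^mq^{m^2/2}$, so extracting the coefficient of $y^m$ gives $\Tr_{\HH^F_m}\bracket{q^{L_0}}=q^{m^2/2}\prod_{n\geq 1}(1-q^n)^{-1}$. This coincides with the bosonic character of $\HH^B_m$, since bosonically $L_0|m\rangle=\tfrac12 m^2|m\rangle$ on the vacuum and the free oscillators $\alpha_{-1},\alpha_{-2},\ldots$ contribute exactly the partition factor. Thus $\Phi_m$ is an isomorphism in each finite-dimensional $L_0$-eigenspace.

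The main obstacle is really just the character matching via Jacobi triple product; everything else is essentially formal. The vacuum checks reduce to straightforward manipulation of half-integer mode commutators, the universal property is automatic once the Heisenberg relations are in hand, and irreducibility of $\HH^B_m$ takes care of injectivity without further work. What makes the statement non-trivial is precisely that the generating function of partitions decorated by a charge label, naturally produced by $\HH^B=\bigoplus_m\HH^B_m$, agrees with the free-fermion product expansion — and this is exactly the combinatorial content of the triple product identity.
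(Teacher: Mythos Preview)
Your proof is correct and follows the standard route to the boson-fermion correspondence: verify the vacuum conditions for $|m\rangle$, invoke the universal property of the bosonic Fock module to build the intertwiner, use irreducibility of $\HH^B_m$ for injectivity, and match $q$-characters via the Jacobi triple product identity for surjectivity. Each step is sound as written, and the paper has already recorded the ingredients you need (the Heisenberg relations on $\HH^F$, the commutators $[\alpha_m,b_n]=b_{m+n}$ and $[\alpha_m,c_n]=-c_{m+n}$, and the annihilation $\alpha_n|m\rangle=0$ for $n>0$).

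As for comparison: the paper does not actually supply a proof of this proposition. It is stated as a standard fact in the review of the boson-fermion correspondence, with the reader referred to the vertex algebra literature (the references cited at the start of the section) for details. So there is no ``paper's own proof'' to compare against; your argument is precisely the sort of proof one finds in those references.
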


\subsubsection{From bosons to fermions}

Let $P$ be the creation operator for momemtum on bosonic fock space defined by
$$
    e^P|m\rangle=|m+1\rangle
$$

It follows that we have the following commutator relation
$$
    [\alpha_0, P]=1
$$

We define formally
\begin{align*}
    \phi(z)=P+\alpha_0 \log z +\sum_{n\neq 0} {\alpha_{-n}\over n}z^n
\end{align*}
where $\alpha(z)$ is related to $\phi(z)$ by
$$
     \alpha(z)=\partial_z\phi(z)
$$

Since $\alpha_0|0\rangle=0$, we view $\alpha_0$ as annihilation operator and $P$ as creation operator, and extend the bosonic normal ordering by
$$
    :\alpha_0P:_B=:P\alpha_0:_B=P\alpha_0
$$

Direct calculation shows
$$
    \phi(z)\phi(w)=\ln (z-w)+:\phi(z)\phi(w):_B, \ \ \text{if}\ |w|<|z|
$$
\begin{prop}Under the above identification of fermionic Fock space $\HH^F$ with bosonic Fock space $\bigoplus\limits_{m\in \Z}\HH^B_m$, the fermionic fields can be represented by bosonic fields acting on $\bigoplus\limits_{m\in \Z}\HH^B_m$ as
\begin{eqnarray}
    b(z)=:e^{\phi(z)}:_B, \ \ c(z)=:e^{-\phi(z)}:_B
\end{eqnarray}
\end{prop}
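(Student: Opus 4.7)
The plan is to verify the identifications $b(z) = \mathop{:}e^{\phi(z)}\mathop{:}_B$ and $c(z) = \mathop{:}e^{-\phi(z)}\mathop{:}_B$ by showing that the right-hand sides satisfy the same operator product expansions as the fermionic fields and act correctly on the vacuum, and then invoking uniqueness of the representation. Set $V_+(z) := \mathop{:}e^{\phi(z)}\mathop{:}_B$ and $V_-(z) := \mathop{:}e^{-\phi(z)}\mathop{:}_B$ as formal operators on $\bigoplus_{m\in\Z}\HH^B_m$, where the normal ordering places all $\alpha_n$ with $n>0$ and $\alpha_0$ to the right of $P$ and of the $\alpha_{-n}$ with $n>0$.

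The first step is the basic OPE computation. Using $\phi(z)\phi(w) = \ln(z-w) + \mathop{:}\phi(z)\phi(w)\mathop{:}_B$ (valid for $|z|>|w|$) together with the standard identity $\mathop{:}e^{A}\mathop{:}_B\, \mathop{:}e^{B}\mathop{:}_B = e^{\langle A^+ B^-\rangle}\mathop{:}e^{A+B}\mathop{:}_B$ for contractions of free bosons, one gets
\begin{align*}
V_\pm(z)V_\pm(w) &= (z-w)\mathop{:}e^{\pm\phi(z)\pm\phi(w)}\mathop{:}_B,\\
V_+(z)V_-(w) &= \frac{1}{z-w}\mathop{:}e^{\phi(z)-\phi(w)}\mathop{:}_B.
\end{align*}
The vanishing factor $(z-w)$ in the first line shows $V_\pm(z)V_\pm(w) + V_\pm(w)V_\pm(z) = 0$ (anticommutativity), while the simple pole in the second produces $\{V_+(z), V_-(w)\} = \delta(z-w)$ in the sense of formal distributions. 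These are precisely the OPEs/anticommutation relations of $b(z)$ and $c(z)$.

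Second, check the vacuum action. Decompose $\phi(z) = P + \alpha_0\log z + \phi_-(z) + \phi_+(z)$ where $\phi_\pm(z) = \sum_{\pm n>0}\frac{\alpha_{-n}}{n}z^n$ so that $\phi_+$ annihilates $|0\rangle$. Then
\[
V_+(z)|m\rangle = e^{P}\, z^{\alpha_0}\, e^{\phi_-(z)}\, e^{\phi_+(z)}|m\rangle = z^{m}\, e^{\phi_-(z)}|m{+}1\rangle,
\]
and a parallel formula for $V_-$. Expanding in powers of $z$ and comparing with the fermionic side ($b(z)|m\rangle = z^m|m{+}1\rangle + (\text{higher})$ after the identification $|m\rangle \leftrightarrow b_{-m+1/2}\cdots b_{-1/2}|0\rangle$, etc.), one checks that the lowest mode and its commutators with the $\alpha_n$'s match. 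The commutator $[\alpha_m, V_\pm(z)] = \pm z^m V_\pm(z)$, which follows directly from $[\alpha_m, \phi(z)] = z^m$ for $m\ne 0$ and $[\alpha_0, V_\pm] = \pm V_\pm$ from $[\alpha_0, P] = 1$, then determines all higher modes inductively once the action on the vacuum is fixed.

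Finally, conclude by an irreducibility/uniqueness argument: the fermionic Fock space $\HH^F$ is the unique (up to isomorphism) irreducible representation of the Clifford algebra generated by the $b_n, c_n$ containing a vector annihilated by all $b_n, c_n$ with $n>0$. The operators $V_\pm(z)$ on $\bigoplus_m \HH^B_m$ satisfy the same Clifford relations (by step one), and the vector $|0\rangle\in \HH^B_0$ is annihilated by their non-negative modes (by step two). Hence the map sending $b(z)\mapsto V_+(z)$, $c(z)\mapsto V_-(z)$ extends to an isomorphism of representations, giving the stated identification. The main technical nuisance, rather than any conceptual obstacle, is bookkeeping for the zero mode $P$ and the $\log z$ in $\phi(z)$ — one must be careful that $P$ is treated as a creation operator and that the fractional $z$-exponents produced by $z^{\alpha_0}$ correctly reproduce the half-integer mode expansion of $b(z), c(z)$ on each charge sector $\HH^B_m$.
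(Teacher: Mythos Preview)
The paper does not actually prove this proposition; it is stated as a known result from the theory of lattice vertex algebras, with a reference to the literature (``See \cite{soliton} for a more systematic treatment\ldots''). Your argument --- check the OPEs of the vertex operators, verify the action on the vacuum vectors $|m\rangle$, then invoke irreducibility of the Clifford module --- is the standard textbook proof and is correct.

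One small quibble: your closing remark about ``fractional $z$-exponents produced by $z^{\alpha_0}$'' is slightly off. On $\bigoplus_m\HH^B_m$ the operator $\alpha_0$ has integer eigenvalues, so $z^{\alpha_0}$ only ever produces integer powers of $z$. This is consistent with the fermionic side, since in $b(z)=\sum_{n\in\Z+1/2} b_n z^{-n-1/2}$ the exponents $-n-1/2$ are also integers. The half-integer indexing of the modes $b_n, c_n$ is a labeling convention, not a statement about the powers of $z$ that appear; no branch issues arise and there is nothing delicate to track here.
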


As an example, we can put the product of two fermionic fields into normal ordered form in two ways. Within fermionic fields
$$
   b(z)c(w)={1\over z-w}+:b(z)c(w):_F
$$
or using the bosonic representation
$$
  b(z)c(w)=:e^{\phi(z)}:_B:e^{-\phi(w)}:_B={1\over z-w}:e^{\phi(z)-\phi(w)}:_B
$$
where in the second equality we have used the Wick's theorem (see for example \cite{soliton}). Therefore
\begin{eqnarray}\label{bosonization}
   :b(z)c(w):_F={1\over z-w}\left(:e^{\phi(z)-\phi(w)}:_B-1\right)
\end{eqnarray}

See \cite{soliton} for a more systematic treatment of the above formula.

\subsection{Gromov-Witten invariants on elliptic curves}\label{mirror-Gromov-Witten}
\subsubsection{Stationary Gromov-Witten invariants}

Let $E$ be an elliptic curve. The Gromov-Witten theory on $E$ concerns the moduli space
$$
       \overline{M}_{g,n}(E,d)
$$
parametrizing connected, genus $g$, n-pointed stable maps to $E$ of degree $d$. Let
$$
    ev_i: \overline{M}_{g,n}(E,d)\to E
$$
be the morphism defined by evaluation at the $i$th marked point. Let $\tilde \omega$ denote the Poincar\'{e} dual of the point class, $\psi_i\in H^2(\overline{M}_{g,n}(E,d), \Q)$ the first Chern class of the cotangent line bundle $L_i$ on the moduli space $\overline{M}_{g,n}(E,d)$. By the Virasoro constraints proved in \cite{virasoro}, the full descendant Gromov-Witten invariants on $E$ are determined by the \emph{stationary sector}, i.e.,
\begin{align*}
    \left\langle \prod\limits_{i=1}^n\tau_{k_i}\tilde \omega \right\rangle_{g,d}=\int_{\left[\overline{M}_{g,n}(E,d)\right]^{vir}} \prod\limits_{i=1}^n \psi_i^{k_i} ev_i^*(\tilde \omega)
\end{align*}
where $\left[\overline{M}_{g,n}(E,d)\right]^{vir}$ is the virtual fundamental class of $\overline{M}_{g,n}(E,d)$. The integral vanishes unless the dimension constraint
\begin{align*}
    \sum\limits_{i=1}^n k_i=2g-2
\end{align*}
is satisfied. Therefore we can omit the subscript $g$ in the bracket $\langle\ \rangle$. We can also consider the disconnected theory as in \cite{Hurwitz}, where the domain curve of the stable map is allowed to have disconnected components. The bracket $\langle\ \rangle^{dis}$ will be used for the disconnected Gromov-Witten invariants. It's proved in \cite{Hurwitz} that the stationary Gromov-Witten invariants can be computed through fermionic vertex algebra, which we now describe. \\

Let $\HH^F$ be the Fock space of free fermionic algebra with fermionic fields $b(z), c(z)$, $\HH^F_0$ is the subspace annihilated by the charge operator $\alpha_0$. Consider the following operator
\begin{align*}
   \mc E(z; \lambda)=\sum_{n\in\mathbb Z} \mc E_n(\lambda) z^{-n-1}=:b(e^{\lambda/2}z) c(e^{-\lambda/2}z):_F+{1\over (e^{\lambda/2}-e^{-\lambda/2})z}
\end{align*}
In components, we can formally write
\begin{align*}
        \mc E_n(\lambda)=\oint dz z^n\mc E(z; \lambda)= \begin{cases}
        \sum\limits_{k\in \Z+{1\over 2}}e^{\lambda k}:b_{-k}c_k:+{1\over (e^{\lambda/2}-e^{-\lambda/2})} & \text{if}\ n=0\\
        \sum\limits_{k\in \Z+{1\over 2}}b_{n-k}c_k e^{\lambda(k-n/2)}&\text{if}\  n\neq 0\\
        \end{cases}
\end{align*}
Here $\oint={1\over 2\pi i}\int_C$, where $C$ is a circle surrounding the origin. Decomposing in terms of powers of $\lambda$, we define
\begin{align*}
     \mc E(z; \lambda)=\sum_{n\geq -1} {\lambda^{n}}\mc E^{(n)}(z)
\end{align*}

Consider the following n-point partition function for the stationary GW invariants of the elliptic curve $E$:
\begin{align*}
   F_E(\lambda_1,\cdots,\lambda_n;q)=\sum_{d\geq 0}q^d \left\langle \prod_{i=1}^n \left(\sum_{k\geq -2}\lambda_i^{k}\tau_{k}(\tilde \omega)\right) \right\rangle_d^{dis}
\end{align*}

The bracket is the disconnected descendant GW invariants.

\begin{prop}[\cite{Hurwitz}]\label{stationary GW} The above partition function can be written as a trace on the fermionic Fock space
\begin{align}
\sum_{d\geq 0}q^d \left\langle \prod_{i=1}^n \left(\sum_{k\geq -2}\lambda_i^{k}\tau_{k}(\tilde \omega)\right) \right\rangle_d^{dis}=\Tr_{\HH^F_0} q^{L_0} \prod_{i=1}^n  {1\over \lambda_i}\oint dz \mc E(z;\lambda_i)
\end{align}
where we use the convention as in \emph{\cite{Hurwitz}}
$$
   \tau_{-2}(\tilde \omega)=1, \ \ \tau_{-1}(\tilde \omega)=0
$$
\end{prop}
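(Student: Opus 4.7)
The proposition is the Okounkov--Pandharipande formula for stationary descendant Gromov--Witten invariants of the elliptic curve. The plan is to pass from the moduli of stable maps to the combinatorics of branched covers via the Gromov--Witten/Hurwitz correspondence, then to express Hurwitz numbers as traces on the infinite wedge. First I would invoke the GW/Hurwitz correspondence: the descendant insertion $\tau_k(\tilde\omega)$ corresponds, up to an explicit constant, to the insertion of a branch point with ramification profile given by the \emph{completed $(k+2)$-cycle} $\overline{(k+2)}$, which is a specific central element in $\mathbb{Q}[S_d]$ built from the $(k+2)$-cycle class and corrections from shorter cycles. Thus
\[
\sum_{d\geq 0} q^d \bigl\langle \tau_{k_1}(\tilde\omega)\cdots \tau_{k_n}(\tilde\omega)\bigr\rangle_d^{dis}
\ =\ \sum_{d\geq 0}q^d \sum_{\lambda\vdash d} \prod_{i=1}^n \frac{f_{\overline{(k_i+2)}}(\lambda)}{(k_i+1)!}
\]
by the classical Burnside/Frobenius formula applied to the elliptic curve (covers of $E$ of degree $d$ are classified by tuples in $S_d$ satisfying the fundamental-group relations, and summation over those tuples reduces to the sum over irreducibles $\lambda\vdash d$ of central characters $f_\mu(\lambda)$).

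Next I would pass to the charge-zero fermionic Fock space via the infinite wedge realization. Under the standard identification $\mathcal{H}^F_0 \cong \bigoplus_{d\geq 0} \bigoplus_{\lambda\vdash d}\mathbb{C}\,v_\lambda$, where $v_\lambda = b_{-\lambda_1+1/2}\,c_{-(\lambda_1^\vee)+1/2}\cdots|0\rangle$ (Maya diagram for $\lambda$), the energy operator $L_0$ acts by $|\lambda|$, so $q^{L_0}$ produces the factor $q^d$. The key technical input is Okounkov's computation of the eigenvalues of the operator
\[
\mathcal E_0(\lambda) \;=\; \sum_{k\in\mathbb Z+1/2} e^{\lambda k}\,{:}b_{-k}c_k{:}_F \;+\; \frac{1}{e^{\lambda/2}-e^{-\lambda/2}}
\]
on the basis $\{v_\lambda\}$: the eigenvalue is the generating function of shifted contents of $\lambda$, which is precisely the ratio
\[
\sum_{k\geq -1}\lambda^{k}\,\frac{f_{\overline{(k+2)}}(\lambda)}{(k+1)!},
\]
i.e., the renormalization constant $1/(e^{\lambda/2}-e^{-\lambda/2})$ together with the completed-cycle correction is exactly what is needed to turn naive cycle characters into their completed counterparts. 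For nonzero modes $\mathcal E_n(\lambda)$ the action shifts $\lambda$ by adding/removing ribbon strips, which matches insertions of branch points of prescribed profile; after integrating $z^n \mathcal E(z;\lambda)$ along a small circle one recovers $\mathcal E_0(\lambda)$ up to the residue that vanishes on $\mathcal H^F_0$.

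Putting the pieces together, the trace
\[
\Tr_{\mathcal H^F_0} q^{L_0}\prod_{i=1}^n \frac{1}{\lambda_i}\oint dz\,\mathcal E(z;\lambda_i)
\]
reduces, by extracting the diagonal action on each $v_\lambda$, to
\[
\sum_{d\geq 0} q^d\sum_{\lambda\vdash d}\prod_{i=1}^n\frac{1}{\lambda_i}\bigl\langle v_\lambda, \mathcal E_0(\lambda_i)v_\lambda\bigr\rangle,
\]
and substituting the content-generating function above, followed by expansion in $\lambda_i$, yields exactly the disconnected stationary descendant series, with the convention $\tau_{-2}(\tilde\omega)=1$ and $\tau_{-1}(\tilde\omega)=0$ arising from the leading $\lambda_i^{-1}$ and $\lambda_i^0$ terms in $\mathcal E(z;\lambda_i)$. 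The main obstacle is the eigenvalue identification of $\mathcal E_0(\lambda)$ with the completed-cycle character generating function; everything else is bookkeeping once the GW/H correspondence is in hand. Both ingredients are established in \cite{Hurwitz}, so the proof amounts to assembling them.
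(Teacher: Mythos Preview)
The paper does not give its own proof of this proposition. It is quoted directly from \cite{Hurwitz} (Okounkov--Pandharipande) as an established result; the only commentary following the statement is a dictionary translating their infinite-wedge notation $(\psi_k,\psi_k^*,C,H)$ into the fermionic Fock-space notation $(b_{-k},c_k,\alpha_0,L_0)$ used here. So there is no in-paper argument to compare your proposal against.

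That said, your sketch is a faithful outline of the Okounkov--Pandharipande proof: the GW/Hurwitz correspondence replaces $\tau_k(\tilde\omega)$ by the completed $(k{+}2)$-cycle, the Burnside formula on the torus turns the Hurwitz count into a sum over partitions of central characters, and the identification of $\mathcal E_0(\lambda)$-eigenvalues on the basis $\{v_\lambda\}$ with the generating function of completed-cycle characters closes the loop. One small wording issue: for the torus the fundamental-group relation is a commutator, so the reduction to $\sum_{\lambda\vdash d}$ uses orthogonality of characters to collapse the commutator sum, not just ``central characters'' directly; but that is exactly what you invoke implicitly. The conventions $\tau_{-2}(\tilde\omega)=1$, $\tau_{-1}(\tilde\omega)=0$ indeed fall out of the $\lambda^{-1}$ and $\lambda^0$ coefficients of $\frac{1}{\lambda}\mathcal E_0(\lambda)$ as you say.
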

In \cite{Hurwitz}, the fermionic Fock space is represented by the infinite wedge space $\Lambda^{\infty\over 2}V$, where $V$ is a linear space with basis $\underline{k}$ indexed by the half-integers:
$$
       V=\bigoplus\limits_{k\in \Z+{1\over 2}} \C \underline{k}
$$

For reader's convenience, the notations used in \cite{Hurwitz} are related to our notations here via
$$
       \psi_k\to b_{-k}, \ \ \psi^*_k\to c_k, \ \ C\to \alpha_0, \ \ H\to L_0
$$

\subsubsection{Bosonization}
Using fermion-boson correspondence, we can have a bosonic description of the Gromov-Witten invariants on the elliptic curve. Following  the bosonization rule
$$
    b(z)=:e^{\phi(z)}:_B, \ \ c(z)=:e^{-\phi(z)}:_B, \ \ \ \mbox{where}\ \phi(z)=P+\alpha_0 \log z +\sum_{n\neq 0} {\alpha_{-n}\over n}z^n
$$
where $P$ is the creation operator for momentum. Using Eqn (\ref{bosonization}), we can write $\mc E(z;\lambda)$ in terms of bosonic fields
$$
   \mc E(z; \lambda)={1\over (e^{\lambda/2}-e^{-\lambda/2})z} :e^{\phi(e^{\lambda/2}z)-\phi(e^{-\lambda/2}z)}:_B
$$

Let $\mc S(t)$ be the function
$$
      \mc S(t)={e^{t/2}-e^{-t/2}\over t}={\sinh(t/2)\over t/2}
$$
Then
$$
   \mc E(z; \lambda)={1\over \lambda \mc S(\lambda) z}:\exp\left(
     \mc S(\lambda z\partial_z)(\lambda z\alpha(z)) \right) :_B, \ \ \ \alpha(z)=\partial_z\phi(z)
$$

The following lemma on the interpretation of the factor ${1\over S(\lambda)}$ will be used later in the Feynman diagram representation of the descendant Gromov-Witten invariants.
\begin{lem}\label{self-loop}
\begin{eqnarray*}         {1\over \mc S(\lambda)}=\exp\left( {\lambda^2\over 2}\left.\mc S\left({1\over 2\pi i}\lambda \partial_{w_1}\right)
\mc S\left({1\over 2\pi i}\lambda \partial_{w_2}\right)\left(\sum_{n\in \mathbb Z\backslash \{0\}}{1\over (2\pi i)^2}{1\over (w_1-w_2+n)^2} \right)\right|_{w_1=w_2} \right)
\end{eqnarray*}
\end{lem}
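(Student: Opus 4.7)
The plan is to verify the identity by a direct term-by-term evaluation: apply the operator to each summand $\frac{1}{(2\pi i)^2(w_1-w_2+n)^2}$, sum, and recognize the result through the Weierstrass product for $\sinh$. Set $\eps := \lambda/(2\pi i)$, so that $(2\pi i)^2\eps^2=\lambda^2$ and $-\eps^2/n^2 = \lambda^2/(4\pi^2 n^2)$. First, since $\mc S(t)$ is even in $t$ and $f(u):=\sum_{n\ne 0}\frac{1}{(2\pi i)^2(u+n)^2}$ depends only on $u=w_1-w_2$, the sign in $\partial_{w_2} u=-1$ is absorbed by the evenness of $\mc S$, and after setting $w_1=w_2$ the right-hand side reduces to
\[
\exp\!\left(\tfrac{\lambda^2}{2}\,\mc S(\eps\partial_u)^2 f(u)\big|_{u=0}\right).
\]

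For the main computation, I would use the identity $\mc S(t)^2=\frac{2(\cosh t-1)}{t^2}=2\sum_{m\ge 0}\frac{t^{2m}}{(2m+2)!}$, which comes from $(\sinh(t/2))^2 = (\cosh t-1)/2$, together with $\partial_u^{2m}\frac{1}{(u+n)^2}\big|_{u=0}=\frac{(2m+1)!}{n^{2m+2}}$. The resulting series telescopes to a logarithm:
\begin{align*}
\mc S(\eps\partial_u)^2\cdot\frac{1}{(u+n)^2}\bigg|_{u=0}
= \sum_{m\ge 0}\frac{2\eps^{2m}}{(2m+2)\,n^{2m+2}}
= \frac{1}{\eps^2}\sum_{k\ge 1}\frac{1}{k}\!\left(\frac{\eps}{n}\right)^{\!2k}
= -\frac{1}{\eps^2}\log\!\left(1-\frac{\eps^2}{n^2}\right).
\end{align*}
Dividing by $(2\pi i)^2$ and substituting the relations above, and noting the expression is symmetric in $n\leftrightarrow -n$, each pair contributes
\[
\mc S(\eps\partial_u)^2\cdot\frac{1}{(2\pi i)^2(u\pm n)^2}\bigg|_{u=0} = -\frac{1}{\lambda^2}\log\!\left(1+\frac{\lambda^2}{4\pi^2n^2}\right).
\]

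Pairing $n\ge 1$ with $-n$ in $\sum_{n\ne 0}$ and multiplying by $\lambda^2/2$ gives
\begin{align*}
\tfrac{\lambda^2}{2}\,\mc S(\eps\partial_u)^2 f(u)\big|_{u=0}
= -\sum_{n\ge 1}\log\!\left(1+\frac{\lambda^2}{4\pi^2n^2}\right)
= -\log\prod_{n\ge 1}\!\left(1+\frac{\lambda^2}{4\pi^2n^2}\right)
= -\log\mc S(\lambda),
\end{align*}
the last equality being Euler's product $\sinh(\lambda/2) = (\lambda/2)\prod_{n\ge 1}(1+\lambda^2/(4\pi^2 n^2))$. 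Exponentiating delivers the lemma.

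The main subtlety I would need to address carefully is the interchange of the infinite sum $\sum_{n\neq 0}$ with the formal infinite-order differential operator $\mc S(\eps\partial_u)^2$. This is handled by working in $\C[[\lambda]]$: the coefficient of each fixed $\lambda^{2k}$ on the right involves only finitely many $u$-derivatives, so at each order in $\lambda$ the interchange reduces to the elementary identity $\sum_{n\ge 1}n^{-2k}=\zeta(2k)$, which is absolutely convergent. Thus the entire identity holds as formal power series in $\lambda$, which is all that is required for its intended use in the Feynman diagram representation that follows.
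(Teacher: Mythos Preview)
Your proof is correct and follows essentially the same route as the paper: reduce to a single variable via the evenness of $\mc S$, expand $\mc S(\eps\partial_u)^2$ as a power series, evaluate the derivatives of $(u+n)^{-2}$ at $u=0$, recognize the logarithmic series, and conclude via the Euler product for $\sinh$. The only cosmetic difference is that you sum the $m$-series first (for each fixed $n$) to obtain the logarithm and then sum over $n$, whereas the paper fixes $k$ and sums over $n$ first before swapping the order; your added remark on working in $\C[[\lambda]]$ to justify the interchange is a nice touch the paper omits.
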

\begin{proof} Since $\mc S(t)$ is an even function of $t$, we have
\begin{eqnarray*}
&&{\lambda^2\over 2}\left.\mc S\left({1\over 2\pi i}\lambda \partial_{w_1}\right)
\mc S\left({1\over 2\pi i}\lambda \partial_{w_2}\right)\left(\sum_{n\in \mathbb Z\backslash \{0\}}{1\over (2\pi i)^2}{1\over (w_1-w_2+n)^2} \right)\right|_{w_1=w_2}
\\ &=&{ (\lambda/2\pi i)^2}\left. \mc S((\lambda/2\pi i)\partial_w)^2
\left(\sum_{n\geq 1}{1\over (w+n)^2} \right) \right|_{w=0} \\
&=&\left.\sum_{k\geq 1}{2(\lambda/2\pi i)^{2k}\over (2k)!}\left({\partial\over\partial w}\right)^{2k-2}
\left(\sum_{n\geq 1}{1\over (w+n)^2} \right)\right|_{w=0}
\\&=&\sum_{k\geq 1}{(\lambda/2\pi i)^{2k}\over k}\sum_{n\geq 1}{1\over n^{2k}}
\\&=&\sum_{n\geq 1}\sum_{k\geq 1}{(\lambda/2\pi i n)^{2k}\over k}
\\&=&-\sum_{n\geq 1}\ln \left(1+{\lambda^2\over (2\pi)^2n^2}\right)
\end{eqnarray*}

On the other hand, from the formula ${\sin \lambda\over \lambda}=\prod_{\geq 1}\left(1-{\lambda^2\over n^2\pi^2}\right)$, we see that
$$
    { S(\lambda)}= { \sinh \lambda/2\over \lambda/2}=\prod_{n\geq 1}\left(1+{\lambda^2\over n^2(2\pi)^2} \right)
$$
this proves the lemma.
\end{proof}

\subsubsection{Feynman Diagram Representation}

Let $w$ be the $\C$ coordinate where the elliptic curve is defined via the equivalence: $w\sim w+1\sim w+\tau$. We identify $z$ with the coordinate on $\C^*$, such that
$$
       z=\exp(2\pi i w)
$$

Consider the following bosonic lagrangian on $\PV^{0,0}_{\Etau}$ coming from the above bosonization
\begin{align}\label{definition-lagrangian}
      \sum_{k\geq -1} \lambda^{k}\mc L^{(k)}(\mu(w))\equiv{1\over \lambda}\exp\left(
     \mc S\left({\lambda\over 2\pi i} \partial_w\right)(\lambda\mu(w)) \right), \ \ k\geq -1
\end{align}
where on the right hand side, we can expand the lagrangian in terms of powers of $\lambda$, which defines $\mc L^{(k)}$. Let $C$ be a representative of the homology class of the circle $[0,1]$ on the elliptic curve. Let $I_C^{(k)}$ be the functional on $\PV^{0,0}_{\Etau}$ given by
$$
       I_C^{(k)}[\mu]=  \int_C {dw} \mc L^{(k)}(\mu(w)), \ \ \mu\in \PV^{0,0}_{\Etau}
$$
\begin{prop}\label{stationary GW}
The stationary GW invariants can be represented by Feynman integrals
$$
 {\sum\limits_{d\geq 0}q^d \left\langle \prod_{i=1}^n\tau_{k_i}(\tilde \omega)\right\rangle_d^{dis}\over \sum\limits_{d\geq 0}q^d\left\langle1 \right\rangle^{dis}_d}=\lim_{\bar\tau\to\infty}\lim_{\substack{\epsilon\to 0\\ L\to \infty}}W^{dis}\left(\PP^{L}_{\epsilon}; I_{C_1}^{(k_i+1)},\cdots, I_{C_n}^{(k_n+1)}  \right)
$$
where the $C_i$'s are representatives of the homology class of the cycle $[0,1]$ and are chosen to be disjoint. $W^{dis}$ is given by the weighted summation of all Feynman diagrams (possibly disconnected) with
$n$ vertices $I_{C_1}^{(k_1+1)},\cdots, I_{C_n}^{(k_n+1)}$ and the propagator $\PP^{L}_{\epsilon}$ which is the BCOV propagator. The normalization factor on the LHS is
$$
  \sum_{d\geq 0}q^d\left\langle1 \right\rangle^{dis}_d=  {1\over \prod\limits_{i=1}^\infty (1-q^i)}
$$
\end{prop}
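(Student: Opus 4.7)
The plan is to start from the fermionic trace representation of the left-hand side given by the preceding proposition, pass to the bosonic side via the boson-fermion correspondence of Section \ref{mirror-boson-fermion}, and then expand the resulting torus trace by Wick's theorem to produce the Feynman-graph expansion $W^{dis}$ on the right-hand side. Using the bosonization identity
$$
\mc E(z;\lambda) = {1\over \lambda \mc S(\lambda) z} :\exp\bracket{\mc S(\lambda z \pa_z)(\lambda z \alpha(z))}:_B
$$
and the substitution $z = e^{2\pi iw}$, the contour integral ${1\over \lambda_i}\oint dz\, \mc E(z; \lambda_i)$ becomes an integral over the cycle $C_i\subset\Etau$ of a normal-ordered bosonic vertex. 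Expanding the exponential in powers of $\lambda_i$ and comparing with \eqref{definition-lagrangian}, the coefficient of $\lambda_i^{k_i}$ matches $I_{C_i}^{(k_i+1)}$ once the bosonic field $\alpha(z)\,dz$ is identified with the polyvector input $\mu(w)\,dw$.

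Next I would compute the bosonic trace $\Tr_{\HH^B_0} q^{L_0}\prodl_i :\!\exp(\cdots)\!:_B$ by Wick's theorem. The crucial input is the basic torus two-point function
$$
{\Tr_{\HH^B_0}\bracket{q^{L_0}\alpha(z_1)\alpha(z_2)} \over \Tr_{\HH^B_0}q^{L_0}} = \suml_{n\in\Z} {z_1 z_2 q^n\over (z_1 - z_2 q^n)^2},
$$
which is precisely the $\bar\tau\to\infty$ limit $\PP_0^\infty(w_1,w_2;\tau,\infty)$ of the BCOV propagator by \eqref{BCOV propagator}. Consequently every Wick pair between two distinct vertices $i\neq j$ contributes the BCOV propagator $\PP^L_\eps$ in the limit $\eps\to 0,\ L\to\infty,\ \bar\tau\to\infty$. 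This identification of propagators is the geometric core of the proof: it translates the algebraic boson two-point function on the torus, weighted by $q^{L_0}$, into the Kodaira-Spencer propagator from Section \ref{mirror-propagator}.

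The self-contractions at a single vertex, which produce tadpole diagrams in the Feynman expansion, are accounted for by Lemma \ref{self-loop}: the prefactor $1/\mc S(\lambda_i)$ in front of each bosonized vertex is precisely the exponentiated self-loop built from the regular $n\neq 0$ part of the diagonal restriction of the propagator, with the singular $n=0$ piece absorbed into the normal ordering. Together with the cross-vertex Wick contractions identified above, this assembles exactly into
$$
W^{dis}\bracket{\PP^L_\eps;\, I_{C_1}^{(k_1+1)},\cdots,I_{C_n}^{(k_n+1)}}
$$
in the claimed limit. Finally, the identity $\Tr_{\HH^B_0} q^{L_0} = \prodl_{n\geq 1}(1-q^n)^{-1}$ accounts for the normalization $\suml_{d\geq 0}q^d\abracket{1}^{dis}_d$ on the left-hand side, which counts unramified covers of $E$.

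The main technical obstacle lies in the propagator matching: one must verify rigorously that the $q^{L_0}$-weighted bosonic two-point function coincides on the nose with the $\bar\tau\to\infty$ limit of the BCOV propagator, tracking all constants including the $E_2(\tau)$ term of \eqref{limit propagator}. A second delicate point is the combinatorial bookkeeping of the $\lambda$-expansion in \eqref{definition-lagrangian}, which must be shown to produce the local functional $I_{C_i}^{(k_i+1)}$ with the correct index shift. Finally, interchanging the limits $\bar\tau\to\infty$, $\eps\to 0$, $L\to\infty$ with the formal $q$-expansion needs justification: this is harmless because each $q^d$-coefficient on both sides involves only finitely many Feynman graphs with smooth integrands after regularization.
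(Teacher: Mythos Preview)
Your strategy is correct and runs parallel to the paper's: bosonize the fermionic trace, then Wick-expand. The paper, however, does not invoke a torus Wick theorem as a black box. Instead it expands $\Tr_{\HH^B_0} q^{L_0}(\cdots)$ explicitly over the Fock basis, producing a Feynman expansion with auxiliary ``Type~II'' bivalent vertices $\tfrac{q^m}{m}\alpha_m\alpha_{-m}$ coming from the $q^{L_0}$-insertion, together with the flat-space contractions (Types~A, B, C); resumming chains of Type~II vertices then reconstructs the BCOV propagator \eqref{BCOV propagator}. Your route is more economical but presupposes the identity $\langle z_1\alpha(z_1)\,z_2\alpha(z_2)\rangle_q=\PP_0^\infty(\tau,\infty)$, which is exactly what the paper's combinatorics establishes from scratch. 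Either way works; the paper's version has the advantage of being self-contained within the free-boson formalism already set up.

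Your self-loop bookkeeping needs sharpening. After discarding the singular $\lambda=0$ term (which vanishes under holomorphic derivatives at the diagonal), the regular part of the BCOV self-loop propagator on $\Etau$ splits into \emph{two} pieces: the real-lattice shifts $-\tfrac{1}{4\pi^2}\sum_{m\in\Z\setminus\{0\}}(w_{12}-m)^{-2}$, and the $\Z\tau$-shifts, which give exactly the $q$-series $\sum_{m\geq 1}\tfrac{mq^m}{1-q^m}(z_1^mz_2^{-m}+z_1^{-m}z_2^m)$. Only the first of these exponentiates to $1/\mc S(\lambda)$ via Lemma~\ref{self-loop}. The second piece is precisely the thermal-minus-vacuum contraction that your torus Wick expansion already assigns to self-loops of a normal-ordered vertex. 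So the prefactor $1/\mc S(\lambda_i)$ does not account for ``the regular $n\neq 0$ part'' in toto; it compensates for the \emph{difference} between the BCOV self-loop and the thermal Wick self-loop. Once you separate these two contributions the matching goes through.
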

\begin{proof} We will use $w$'s for coordinates on $\C$ and $z$'s for coordinates on $\C^*$. We use the conventions that are used in section \ref{mirror-propagator}. The BCOV propagator can be written as
$$
       \PP_\epsilon^L(w_1,w_2;\tau,\bar\tau)=-{1\over \pi}\int_\epsilon^L {du\over 4\pi u} \sum_{\lambda\in \Gamma} \left( {\bar w_{12}-\bar \lambda\over 4 u} \right)^2 e^{-{|w_{12}-\lambda|^2/4u}}, \ \ \mathrm{where}\ \ w_{12}=w_1-w_2
$$
and under the limit $\epsilon\to 0, L\to \infty, \bar\tau\to\infty$,
$$
\PP_{0}^\infty(w_1,w_2;\tau,\infty)=\sum_{m\in\mathbb Z}{z_1z_2 q^m\over (z_1-z_2 q^m)^2}={z_1z_2\over (z_1-z_2)^2}+\sum_{m\geq 1}
    {m z_1^m z_2^{-m}q^{m}\over 1-q^m}+\sum_{m\geq 1}
    {m z_1^{-m} z_2^{m}q^{m}\over 1-q^m}
$$
where $z_i=\exp(2\pi i w_i)$ and $|q|\leq |z_i|<1$ for $i=1,2$. Let $\{C_i\}_{1\leq i\leq n}$ be disjoint cycles lying in the annulus $\{z\in \C^*||q|<|z|<1\}$ and representing the generator of the fundamental group of $\C^*$ , such that $C_{i}$ lies entirely outside $C_{i+1}$ for $1\leq i<n$. By Proposition \ref{stationary GW} and the boson-fermion correspondence
\begin{align}
   &\sum_{d\geq 0}q^d \left\langle \prod_{i=1}^n \left(\sum_{k\geq -2}\lambda_i^{k}\tau_{k}(\tilde \omega)\right) \right\rangle_d^{dis}\nonumber\\
   =&\Tr_{\HH^B_0} q^{L_0}\prod_{i=1}^n  {1\over \lambda_i^2}\oint_{C_i} {dz\over z} {1\over  \mc S(\lambda_i) }:\exp\left(
     \mc S(\lambda_i z\partial_z)(\lambda_i z\alpha(z)) \right) :_B \nonumber
  \\=& \sum_{k_1\geq 0,k_2\geq 0,\cdots}\prod_{i=1}^{\infty}{q^{ik_i}\over i^{k_i}k_i!}
  \left\langle0\left| \left(\prod_{i=1}^\infty \alpha_i^{k_i}\right)  \prod_{i=1}^n  {1\over \lambda_i^2}\oint_{C_i} {dz\over z} {1\over  \mc S(\lambda_i) }
:\exp\left(
     \mc S(\lambda_i z\partial_z)(\lambda_i z\alpha(z)) \right) :_B  \left(\prod_{i=1}^\infty \alpha_{-i}^{k_i}\right) \right| 0\right\rangle\label{bosonized GW}
\end{align}

Using Wick's Theorem (see for example \cite{soliton}), we can put the expression in the bracket into the normal ordered form, and the above summation can be expressed in terms of Feynman diagrams. From the normal ordering relations
\begin{eqnarray*}
       z_1\alpha(z_1)z_2\alpha(z_2)&=&{z_1z_2\over (z_1-z_2)^2}+:z_1\alpha(z_1)z_2\alpha(z_2):_B, \ \ |z_1|>|z_2|\\
      \alpha_n z\alpha(z)&=& n z^{n}+:\alpha_n z\alpha(z):_B \ \ n>0\\
      z\alpha(z)\alpha_{-n}&=& n z^{-n}+:z\alpha(z)\alpha_{-n}:_B\ \ n>0\\
      \alpha_n \alpha_{-n}&=&n+ :\alpha_n\alpha_{-n}:_B\ \ n>0
\end{eqnarray*}
we see that there're two types of vertices for the Feynman diagrams.
\begin{enumerate}
\item The Type I vertices are given by
$$
\exp\left(
     \mc S(\lambda_i z\partial_z)(\lambda_i z\alpha(z)) \right)
$$
for each $\lambda_i, 1\leq i\leq n$, where $z\alpha(z)$ is viewed as input.
\item The Type II are vertices of valency two for each $m>0$, with two inputs $\alpha_m, \alpha_{-m}$ and weight ${q^m\over m}$, i.e., vertices of the form
$$
    {q^m\over m}\alpha_m \alpha_{-m}, \ \ m>0
$$
This vertex comes in pairs from $ \left(\prod\limits_{i=1}^\infty \alpha_i^{k_i}\right) $ and $ \left(\prod\limits_{i=1}^\infty \alpha_{-i}^{k_i}\right)$.
\end{enumerate}
The propagators also have three types.
\begin{enumerate}
\item The Type A propagators connect $z_1\alpha(z_1)$ and $z_2\alpha(z_2)$ at two different vertices of Type I and gives the value
$$
       {z_1z_2\over (z_1-z_2)^2}
$$
\item The Type B propagators connects $z\alpha(z)$ from the vertices of Type I and $\alpha_m$ from the vertices of Type II, which gives the value
$$
           |m| z^{m}, \ \ m\in \Z\backslash \{0\}
$$
\item The Type C propagators connect $\alpha_m, \alpha_{-m}$ from two \emph{different} vertices of Type II, which gives the value
$$
          |m|
$$
\end{enumerate}
Since the vertex of Type II has valency two, we can insert any number of vertices of Type II into the propagator of Type A using propagator of Type C. This is equivalent to considering only vertices of Type I from $ \exp\left(
     \mc S(\lambda_i z\partial_z)(\lambda z\alpha(z)) \right) $ but with propagators
\begin{align}\label{propagator I}
       \left({z_1z_2\over (z_1-z_2)^2}+\sum_{m\geq 1}
    {m z_1^m z_2^{-m}q^{m}\over 1-q^m}+\sum_{m\geq 1}
    {m z_1^{-m} z_2^{m}q^{m}\over 1-q^m} \right)
\end{align}
connecting $z_1\alpha(z_1)$ and $z_2\alpha(z_2)$ at two different vertices, and
\begin{align}\label{propagator II}
      \left. \left(\sum_{m\geq 1}
    {m q^{m}z_1^mz_2^{-m}\over 1-q^m}+\sum_{m\geq 1}
    {m q^{m}z_1^{-m}z_2^m\over 1-q^m} \right)\right|_{z_1=z_2}
\end{align}
for propagator connecting two $z\alpha(z)$'s at the same vertex.

Now we compare it with the Feynman integral
$$
\sum_{k_i\geq -2}\lim_{\bar\tau\to\infty}\lim_{\substack{\epsilon\to 0\\ L\to \infty}}W^{dis}\left(\PP^{L}_{\epsilon}; \lambda_1^{k_i}I_{C_1}^{(k_i+1)},\cdots, \lambda_{n}^{k_n}I_{C_n}^{(k_n+1)}  \right)
$$
The vertices $I_{C_1}^{(k_i+1)}[\mu(w)]$ are precisely the same by construction via the identification of fields
$$
    \mu(w)=z\alpha(z)
$$

The propagator connecting two \emph{different} vertices $I_{C_1}^{(k_i+1)}$ and $I_{C_1}^{(k_j+1)}$ for $i\neq j$ is
$$
  \lim_{\bar\tau\to\infty}\lim_{\substack{\epsilon\to 0\\ L\to \infty}}\PP^{L}_{\epsilon}(w_1-w_2;\tau, \bar\tau)=  \left({z_1z_2\over (z_1-z_2)^2}+\sum_{m\geq 1}
    {m z_1^m z_2^{-m}q^{m}\over 1-q^m}+\sum_{m\geq 1}
    {m z_1^{-m} z_2^{m}q^{m}\over 1-q^m} \right)
$$
by equation (\ref{BCOV propagator}), where $z_i=\exp(2\pi i w_i), i=1,2$. This is precisely (\ref{propagator I}). To consider the self-loop contributions, note that the regularized BCOV propagator is given by the sum
\begin{eqnarray*}
       &&\PP_\epsilon^L(w_1,w_2;\tau,\bar\tau)\\&=&-{1\over \pi}\int_\epsilon^L {du\over 4\pi u} \sum_{\lambda\in \Gamma} \left( {\bar w_{12}-\bar \lambda\over 4 u} \right)^2 e^{-{|w_{12}-\lambda|^2/4u}}\\
       &=&-{1\over \pi}\int_\epsilon^L {du\over 4\pi u} \left( {\bar w_{1}-\bar w_2\over 4 u} \right)^2 e^{-{|w_{1}-w_2/4u}}-{1\over \pi}\int_\epsilon^L {du\over 4\pi u} \sum_{\lambda\in \Gamma, \lambda\neq 0} \left( {\bar w_{1}-\bar w_2-\bar \lambda\over 4 u} \right)^2 e^{-{|w_{1}-w_2-\lambda|^2/4u}}\\
\end{eqnarray*}
Since the vertices $I_{C_1}^{(k_i+1)}$ contains only holomorphic derivatives, the first term doesn't contribute to the self-loops after setting $w_1=w_2$, while the second is smooth around the diagonal $w_1=w_2$. By Eqn (\ref{limit propagator}), under the limit $\lim\limits_{\bar\tau\to\infty}\lim\limits_{\substack{\epsilon\to 0\\ L\to \infty}}$, the propagator for the self-loop is equivalent to
\begin{eqnarray*}
     &&-{1\over 4\pi^2}\sum_{m\in \Z\backslash\{0\}}{1\over (w_1-w_2-m)^2}   -{1\over 4\pi^2}\sum_{n\in \Z\backslash\{0\}}\sum_{m\in \Z}{1\over (w_1-w_2-(m+n\tau))^2}\\
     &=&-{1\over 4\pi^2}\sum_{m\in \Z\backslash\{0\}}{1\over (w_1-w_2-m)^2} +\left(\sum_{m\geq 1}
    {m q^{m}z_1^mz_2^{-m}\over 1-q^m}+\sum_{m\geq 1}
    {m q^{m}z_1^{-m}z_2^m\over 1-q^m} \right)
\end{eqnarray*}
which differs from (\ref{propagator II}) by the first term. By Lemma \ref{self-loop}, the first term contributes precisely the factor ${1\over \mc S(\lambda)}$ in (\ref{bosonized GW}). This proves the theorem.
\end{proof}
\begin{rmk} If all $k_i$'s are taken to be $1$, then it reduces to Dijkgraaf's theorem in \cite{Dijkgraaf}, where the RHS are given by cubic Feynman diagrams. Dijkgraaf proves that the corresponding cubic Feynman integrals compute certain Hurwitz numbers on the elliptic curve, which  can be identified with the stationary descendant Gromov-Witten invariants with input $\tau_1(\tilde w)$ under the Hurwitz/Gromov-Witten correspondence \cite{Hurwitz}.
\end{rmk}
 We can further decompose the lagrangian by the number of derivatives
$$
 \mc L^{(k)}(\mu)=\sum_{g\geq 0}\mc L_{g}^{(k)}(\mu)
$$
where $L_{g}^{(k)}(\mu)$ contains $2g$ derivatives. Let $I_{C,\hbar}^{(k)}$ be the functional on $\PV^{0,0}_{\Etau}$ taking value in $\C[[\hbar]]$ that is given by
$$
       I_{C,\hbar}^{(k)}[\mu]= \sum\limits_{g\geq 0}\hbar^g \int_C {dw} \mc L_g^{(k)}(\mu(w)), \ \ \mu\in \PV^{0,0}_{\Etau}
$$
where $C$ is a cycle representing the class $[0,1]$ as before.
\begin{cor}\label{hbar stationary GW}
With the same notations as in Proposition \ref{stationary GW}, we have
\begin{align}
   { {1\over \hbar}\sum\limits_{d\geq 0}q^d \hbar^{g}\left\langle \prod_{i=1}^n\tau_{k_i}(\tilde \omega)\right\rangle_{g,d}^{dis}\over \sum\limits_{d\geq 0}q^d \langle 1 \rangle^{dis}_d}=
\lim_{\bar\tau\to \infty}\lim_{\substack{\epsilon\to 0\\ L\to \infty}}\bracket{\exp\left({\hbar} {\pa\over \pa {\PP^{L}_{\epsilon}}} \right) \prod_{i=1}^n {1\over \hbar } I_{C_i, \hbar}^{(k_i)}}[0]
\end{align}
where on the right hand side, it's understood that the external inputs are zero.
\end{cor}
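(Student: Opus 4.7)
The plan is to deduce the corollary directly from Proposition \ref{stationary GW} by refining the Feynman-diagram bookkeeping to track, via the formal parameter $\hbar$, the genus of each stable map. The two ingredients are already in place: Proposition \ref{stationary GW} gives the ungraded identification between $\sum_d q^d \langle \prod \tau_{k_i}(\tilde\omega)\rangle^{dis}_d/\sum_d q^d\langle 1\rangle^{dis}_d$ and a sum over Feynman graphs built from the vertices $I_{C_i}^{(k_i+1)}$ and the BCOV propagator, while the derivative decomposition $\mathcal{L}^{(k)}(\mu)=\sum_{g\geq 0}\mathcal{L}_g^{(k)}(\mu)$ from (\ref{definition-lagrangian}) organizes the Lagrangian according to the number $2g$ of holomorphic derivatives hitting $\mu$ inside each vertex.

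First I would insert the formal parameter $\hbar$ as follows. The weight $\hbar^g$ on $\mathcal{L}_g^{(k)}$ in the definition of $I_{C,\hbar}^{(k)}$, together with the explicit $1/\hbar$ prefactor in $\frac{1}{\hbar}I_{C_i,\hbar}^{(k_i)}$, gives each vertex of internal genus $g_i$ the weight $\hbar^{g_i-1}$, while $\exp(\hbar\,\partial_{\PP_\epsilon^L})$ assigns $\hbar$ to every propagator. A (possibly disconnected) diagram $\Gamma$ with $V$ vertices carrying internal genera $g_1,\ldots,g_V$, with $E$ edges, and with $c$ connected components thus contributes with total weight $\hbar^{E-V+\sum_i g_i}$. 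Using $E-V+c=\sum_j L_j$, where $L_j$ is the number of loops of the $j$-th component, this is precisely $\hbar^{-c+\sum_j(L_j+\sum_{i\in\text{comp}_j}g_i)}=\hbar^{-c+\sum_j g^{\text{comp}}_j}$, identifying the total genus of each connected component as loops$+$internal-genera.

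Second I would match this with the left-hand side. By the exponential formula $Z^{dis}=\exp(Z^{conn})$, the disconnected stationary partition function graded by $\hbar^{g-1}$ per connected component reproduces the weighting $\hbar^{-c+\sum g^{\text{comp}}_j}$ obtained above, which after dividing by the normalization $\sum_d q^d\langle 1\rangle^{dis}_d$ matches the $\frac{1}{\hbar}\hbar^g$ prefactor in the corollary. Finally, taking the limits $\epsilon\to 0$, $L\to\infty$, and $\bar\tau\to\infty$ on the right-hand side commutes with the $\hbar$-expansion since it is taken termwise at each order in $\hbar$, so the limiting statement in Proposition \ref{stationary GW} applies coefficient by coefficient.

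The main (and only) obstacle is the combinatorial bookkeeping: one must verify carefully that the derivative-grading on the vertex side, the loop-grading from the propagator expansion, and the genus convention for disconnected Gromov--Witten invariants agree on the nose, including the self-loop contributions controlled by Lemma \ref{self-loop}. Once this alignment is checked, the corollary is an immediate consequence of Proposition \ref{stationary GW}.
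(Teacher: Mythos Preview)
Your approach is correct and reaches the same conclusion as the paper, but the paper takes a shorter route. Rather than tracking $\hbar$ graph by graph via the Euler-characteristic identity $E-V+\sum_i g_i = g-1$, the paper simply rewrites Proposition~\ref{stationary GW} as an identity of generating functions in $\lambda_1,\ldots,\lambda_n$ and substitutes $\lambda_i\to\lambda_i\sqrt{\hbar}$. On the Gromov--Witten side the dimension constraint $\sum_i k_i=2g-2$ converts this into an overall factor $\hbar^{g-1}$; on the Feynman side, after absorbing $\sqrt{\hbar}$ into the field, the propagator picks up a factor $\hbar$ and the vertex $\frac{1}{\lambda}\exp\bigl(\mc S(\lambda\partial_w/2\pi i)(\lambda\mu)\bigr)$ reorganizes term-by-term into $\frac{1}{\hbar}\sum_g\hbar^g\mc L^{(k)}_g$, i.e.\ exactly $\frac{1}{\hbar}I^{(k)}_{C,\hbar}$. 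The substitution therefore performs all of your loop/derivative/valence bookkeeping in one stroke, including the self-loop factor $1/\mc S(\lambda)$, which simply rescales along with everything else.

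What your approach buys is transparency: the identity $E-V+c=\sum_j L_j$ together with $g_j^{\text{comp}}=L_j+\sum_{i\in j}g_i$ makes visible \emph{why} the $\hbar$-grading coincides with genus, whereas the paper's rescaling hides this behind the dimension constraint. What the paper's approach buys is economy: there is no separate verification needed for self-loops or for the disconnected-to-connected matching, since the substitution acts uniformly on the full generating function. Both are standard; your version is the unpacked form of the paper's.
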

\begin{proof} Proposition \ref{stationary GW} can be rewritten as
\begin{align}
   {\sum\limits_{d\geq 0}\sum\limits_{g\geq 0}q^d\left\langle \prod\limits_{i=1}^n\sum\limits_{k_i\geq -2}\lambda_i^{k_i}\tau_{k_i}(\tilde \omega)\right\rangle_{g,d}^{dis}\over \sum\limits_{d}q^d \langle 1 \rangle^{dis}_d}\nonumber=
\lim\limits_{\bar\tau\to \infty}\lim_{\substack{\epsilon\to 0\\ L\to \infty}}\exp\left( {\pa\over \pa {\PP^{L}_{\epsilon}}} \right) \prod_{i=1}^n  \left({1\over \lambda_i}\sum\limits_{k_i\geq -1}\lambda_i^{k_i}I_{C_i}^{(k)} \right)[0]
\end{align}
The theorem follows easily from Eqn (\ref{definition-lagrangian}) and the rescaling of the above equation under
$$
    \lambda_i\to \lambda_i \sqrt{\hbar}
$$
\end{proof}
\subsection{Modularity and $\bar \tau\to \infty$ limit}\label{mirror-limit}
Kodaira-Spencer gauge theory is known to be the closed string field theory of B-twisted topological string. It's argued in \cite{BCOV} by string theory technique that the B-twisted topological string amplitude would have a meaningful $\bar t\to \infty$ limit around the large complex limit of the Calabi-Yau manifold. Here $t$ is certain coordinates on the moduli space of complex structures. We will investigate the meaning of $\bar\tau\to \infty$ for the elliptic curve example in this section.

\subsubsection{$\bar \tau\to \infty$ limit}

Let $\omega={i\over 2\Im \tau}{\pa_w} \wedge d\bar w\in \PV^{1,1}_{\Etau}$, which is normalized such that
$$
   \Tr\ \omega=1
$$

Let $\F^{\Etau}[L]=\sum\limits_{g\geq 0}\hbar^g\F^{\Etau}_g[L]$ be the family of effective action constructed from quantizing the BCOV theory on $\Etau$,  and
$$
      \F^{\Etau}_g[\infty]\equiv \lim\limits_{L\to \infty}\F_g^{\Etau}[L]
$$

We are interested in the following correlation functions
$$
   \F^{\Etau}_g[\infty][t^{k_1}\omega,\cdots,t^{k_n}\omega]
$$
for some non-negative integers $k_1,\cdots, k_n$ satisfying the Hodge weight condition
$$
   \sum\limits_{i=1}^n k_i=2g-2
$$

Let $\mathbb H=\{\tau\in \C |\Im \tau>0\}$ be the complex upper half-plane. The group $SL(2.\Z)$ acts on $\mc H$ by
$$
   \tau\to \gamma\tau={A\tau+B\over C\tau+D}, \ \ \mbox{for}\ \gamma\in \begin{pmatrix} A& B\\ C& D \end{pmatrix} \in SL(2, \Z)
$$
Recall that an \emph{almost holomorphic modular form} \cite{almost-modular-form} of weight $k$ on $SL(2,\Z)$ is a function
$$
       \hat f: \mathbb H\to \C
$$
which grows at most polynomially in $1/\Im (\tau)$ as $\Im(\tau)\to 0$ and
satisfies the transformation property
$$
          \hat f(\gamma \tau)=(C\tau+D)^k f(\tau) \ \ \mbox{for all}\ \gamma\in \begin{pmatrix} A& B\\ C& D \end{pmatrix} \in SL(2, \Z)
$$
and has the form
$$
   \hat f(\tau, \bar\tau)=\sum_{m=0}^Mf_m(\tau) \Im(\tau)^{-m}
$$
for some integer $M\geq 0$, where the functions $f_m(\tau)$'s are holomorphic in $\tau$. The following limit makes sense
$$
       \lim\limits_{\bar\tau\to \infty} \hat f(\tau, \bar\tau)= f_0(\tau)
$$
which gives the isomorphism between the rings of almost holomorphic modular forms and quasi-modular forms \cite{almost-modular-form}. A general construction of such modular forms are described in \cite{Li-modular} by graph integrals

\begin{lem}[\cite{Li-modular}]\label{almost modular forms}
Let $\Gamma$ be a connected oriented graph, $V(\Gamma)$ be the set of vertices, $E(\Gamma)$ be the set of edges, and $l,r: E\to V$ be the maps which give each edge the associated left and right vertices. Let $W_{\Gamma, \{n_e\}}(\PP_\epsilon^L)$ be the graph integral
\begin{align*}
    W_{\Gamma, \{n_e\}}(\PP_\epsilon^L)=\prod_{v\in V}\int_{\Etau} {d^2 w_v\over \Im \tau} \prod_{e\in E} \pa^{n_e}_{w_{l(e)}} \PP_\epsilon^L(w_{l(e)}, w_{r(e)};\tau, \bar\tau)
\end{align*}
where $n_e$'s are non-negative integers that associates to each $e\in E$, and $\PP_\epsilon^L$ is the regularized BCOV propagator on the elliptic curve $\Etau$. Then $\lim\limits_{\substack{\epsilon\to 0\\ L\to \infty}}W_{\Gamma, \{n_e\}}(\PP_\epsilon^L)$ exists as an almost holomorphic modular form of weight $2|E|+\sum\limits_{e\in E}n_e$.
\end{lem}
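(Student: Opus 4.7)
The plan splits into two independent tasks: first, to establish that $W_{\Gamma,\{n_e\}}(\PP_\epsilon^L)$ admits a finite limit as $\epsilon \to 0$ and $L \to \infty$; second, to show that this limit transforms as an almost holomorphic modular form of weight $2|E| + \sum_e n_e$ under $SL(2,\Z)$.

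For convergence, I would exploit the compactness of $\Etau$ together with the explicit limiting formula (\ref{limit propagator}), $\PP_0^\infty(w_1, w_2; \tau, \bar\tau) = -\frac{1}{4\pi^2}\wp(w_1-w_2;\tau) - \frac{1}{12}E_2^*(\tau,\bar\tau)$, valid off the diagonal. Uniform convergence of $\PP_\epsilon^L \to \PP_0^\infty$ on compact subsets of $\Etau \times \Etau$ disjoint from $\{w_1 = w_2\}$ follows from standard heat kernel estimates, with the $L\to\infty$ tail controlled by exponential decay of the non-zero Laplacian eigenmodes on the compact curve. Near the diagonal the unregularized kernel has a leading $(w_1-w_2)^{-2}$ pole whose derivatives $\partial_w^{n_e}$ remain locally integrable against the Euclidean measure $d^2 w$ in the principal-value sense. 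Once the integrand is dominated locally by an integrable function independent of $\epsilon, L$, dominated convergence applies. This is essentially the finiteness statement of \cite[Prop.~B.1]{Li-modular}, which I would invoke to handle uniform convergence as multiple vertices coalesce.

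Given convergence, identifying the weight is a bookkeeping exercise. Under $\tau \mapsto \gamma\tau = (A\tau+B)/(C\tau+D)$, the lattice rescales as $\Z + \gamma\tau\,\Z = (C\tau+D)^{-1}(\Z + \tau\Z)$, so we must simultaneously apply $w \mapsto w/(C\tau+D)$. The Weierstrass function satisfies $\wp(w/(C\tau+D);\gamma\tau) = (C\tau+D)^2 \wp(w;\tau)$, while $E_2^*(\tau,\bar\tau)$ is by construction an almost holomorphic modular form of weight two. Hence each factor of $\PP_0^\infty$ contributes weight $2$. Each holomorphic derivative $\partial_{w_{l(e)}}$ absorbs one further factor of $(C\tau+D)$, contributing $n_e$ extra units per edge. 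The measure $d^2 w_v / \Im \tau$ is modular invariant because both numerator and denominator rescale by $|C\tau+D|^{-2}$. Summing over all edges yields the total weight $2|E| + \sum_e n_e$. The almost-holomorphic structure is inherited from $E_2^*$: each appearance of the propagator contributes at most one power of $1/\Im\tau$, so $W_{\Gamma,\{n_e\}}$ is polynomial in $1/\Im\tau$ of degree at most $|E|$, as required.

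The main obstacle is the uniform control of the $\epsilon\to 0$ limit when several vertices coalesce at the same point, because the derivatives $\partial_w^{n_e}$ sharpen the diagonal singularity and one must rule out short-distance divergences coming from nested configurations of vertices. This is the content of the power-counting argument in \cite[Appendix~B]{Li-modular}: the holomorphic nature of every derivative in the graph integrand allows each potentially divergent subgraph to be reorganized, via integration by parts in the anti-holomorphic variables, as a total $\bar\partial$-derivative paired against a smooth test function on $\Etau$, which renders the contribution manifestly convergent. With that estimate in hand, exchange of limit and integration is routine and the modular weight computation above closes the proof.
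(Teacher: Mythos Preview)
The paper does not prove this lemma at all; it is stated with the attribution \cite{Li-modular} and no argument is supplied in the present text. There is therefore nothing in the paper to compare your proposal against.

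That said, your outline is a reasonable sketch of what a proof must contain, and your weight-counting argument for the modular transformation law is correct and essentially complete: each propagator contributes weight $2$ via the scaling of $\wp$ and $E_2^*$, each holomorphic derivative $\partial_w$ contributes one additional unit, and the measure $d^2w_v/\Im\tau$ is invariant, giving total weight $2|E|+\sum_e n_e$. The observation that the only non-holomorphic dependence enters through the $1/\Im\tau$ term of $E_2^*$, so that the result is polynomial in $1/\Im\tau$ of degree at most $|E|$, is also correct.

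On the convergence side there is a small imprecision worth flagging. Your statement that the diagonal singularity ``remains locally integrable against $d^2w$ in the principal-value sense'' is only immediate for a \emph{single} edge: the holomorphic pole $(w_1-w_2)^{-n_e-2}$ has vanishing angular average, so the integral over a small disc is finite by rotational cancellation. For graphs with several edges incident to the same vertex, or with multiple edges between the same pair of vertices, the product of such poles no longer has this simple structure, and naive principal-value reasoning does not suffice. You are right that this is exactly the content of the estimates in \cite[Appendix~B]{Li-modular}, and you correctly identify it as the main obstacle; just be aware that the single-edge heuristic you state does not by itself cover the multi-edge case. A complete argument also needs to verify that the integrated result is genuinely holomorphic in $\tau$ away from the $1/\Im\tau$ contributions, which requires knowing that the $\tau$-dependence of the integration domain $\Etau$ does not introduce spurious $\bar\tau$-dependence; this follows from translation invariance but should be said.
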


\begin{prop}
    $\F^{\Etau}_g[\infty][t^{k_1}\omega,\cdots,t^{k_n}\omega]$, which is viewed as a function on $\tau\in \mathbb H$, is an almost holomorphic modular form of weight $2g-2+2n$.
\end{prop}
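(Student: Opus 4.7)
The plan is to express the correlator $\F^{\Etau}_g[\infty][t^{k_1}\omega,\ldots,t^{k_n}\omega]$ as a finite sum of Feynman graph integrals of the type covered by Lemma \ref{almost modular forms} and then compute the combined modular weight.

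First I would invoke Theorem \ref{admissible-quantization}: there is an admissible functional $S=\sum_g\hbar^gS_g\in \Ol^{ad}(\E_\Etau)[[\hbar]]$ producing $\F^{\Etau}[L]$ via the renormalization group flow. Passing to $L\to\infty$ (legitimate on the compact elliptic curve) and extracting the genus-$g$ coefficient evaluated on the external inputs $t^{k_1}\omega,\ldots,t^{k_n}\omega$, one obtains a finite weighted sum over connected Feynman graphs $\Gamma$: each internal edge carries the BCOV propagator $\PP_0^\infty$, each vertex $v$ is decorated by some admissible Lagrangian $S_{g_v}$, and each external leg receives $\omega=\tfrac{i}{2\Im\tau}\pa_w\wedge d\bar w$.

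Next I would exploit admissibility to pin down the combinatorics. Since $\PP_0^\infty\in\PV^{0,0}\hotimes\PV^{0,0}$ carries no $d\bar z$ while an admissible vertex requires exactly one input carrying $d\bar z$, every vertex of $\Gamma$ must accommodate precisely one external $\omega$-leg, forcing $|V(\Gamma)|=n$. The $d\bar w$ factor of each $\omega$ combines with the vertex Lagrangian density so that the trace integral at $v$ reduces to an integral against the $SL(2,\Z)$-invariant measure $\tfrac{d^2w_v}{\Im\tau}$ used in Lemma \ref{almost modular forms}. By Proposition \ref{holomorphic derivatives}, each $S_{g_v}$ contains exactly $2g_v$ holomorphic derivatives. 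Since $\omega$ is translation-invariant, nontrivial graphs are those in which all vertex derivatives fall on incident propagator edges, producing a multi-index $\{n_e\}$ with $\sum_e n_e=2\sum_v g_v$. For a connected diagram with $|V|=n$ the genus constraint $g=|E|-n+1+\sum_v g_v$ then yields
\begin{equation*}
2|E|+\sum_e n_e=2|E|+2\sum_v g_v=2(g+n-1)=2g-2+2n,
\end{equation*}
matching the Hodge-weight condition $\sum_i k_i=2g-2$ plus two per insertion. Applying Lemma \ref{almost modular forms} diagram-by-diagram and summing the finitely many contributions will give an almost holomorphic modular form of weight $2g-2+2n$.

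The main technical obstacle I anticipate lies in step two: carefully verifying the measure identification. One must check that after inserting the $\omega$'s, the vertex integrals really take the form $\int_{\Etau}\tfrac{d^2w_v}{\Im\tau}(\cdots)$ with modular-invariant measure, so that the normalization factors $\tfrac{i}{2\Im\tau}$ from each $\omega$ are absorbed cleanly into the Lebesgue measure rather than producing extraneous $(\Im\tau)^{-m}$ factors that would shift the apparent weight. The derivative counting in step three is also slightly delicate because one must confirm that the $t^{k_i}$ descendants do not introduce additional derivatives — they merely select Taylor components of $S_{g_v}$, leaving the $2g_v$-derivative count of Proposition \ref{holomorphic derivatives} intact. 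Once these bookkeeping points are settled, the conclusion is an immediate invocation of Lemma \ref{almost modular forms}.
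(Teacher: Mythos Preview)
Your proposal is correct and follows essentially the same approach as the paper: express the correlator as a sum of connected Feynman graph integrals, argue that each contributing graph has exactly $n$ vertices, count derivatives via Proposition \ref{holomorphic derivatives}, and conclude from the Euler-characteristic relation $1-L=n-|E|$ together with $g=L+\sum_v g_v$ that the weight is $2|E|+\sum_e n_e=2g-2+2n$, then invoke Lemma \ref{almost modular forms}. Your justification for $|V(\Gamma)|=n$ via the $d\bar z$-bookkeeping of admissible vertices is in fact more explicit than the paper's own argument, which cites Proposition \ref{holomorphic derivatives} somewhat elliptically at that step; the measure identification you flag as a concern is indeed implicit in the paper and your treatment of it is appropriate.
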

\begin{proof} $\F^{\Etau}_g[\infty][t^{k_1}\omega,\cdots,t^{k_n}\omega]$ is given by Feynman graph integrals of the type in the previous Lemma \ref{almost modular forms}. Let $\Gamma$ be one of such graphs. By Proposition \ref{holomorphic derivatives}, it has precisely $n$ vertices $\fbracket{v_1,\cdots, v_n}$ . Let $E$ be the number of edges, $N$ be the total number of holomorphic derivatives appearing in the local functionals for the vertices, $L$ be the number of edges. Assume that the genus at each vertex $v_i$ is $g_i$. Since the graph is connected, we have
$$
    1-L=n-E
$$
and
$$
   g=L+\sum_{i=1}^n g_i
$$

By Proposition \ref{holomorphic derivatives}, $N$ is related to the genus via
$$
     N=\sum_{i=1}^n 2g_i
$$

We conclude that the graph integral for $\Gamma$ is an almost holomorphic modular form of weight
$$
        2E+N=2g-2+2n
$$
\end{proof}
\begin{cor}
The following limit makes sense
$$
    \lim\limits_{\bar\tau\to \infty}\F^{\Etau}_g[\infty][t^{k_1}\omega,\cdots,t^{k_n}\omega]
$$
which is a quasi-modular form of weight $2g-2+2n$.
\end{cor}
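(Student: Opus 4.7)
The plan is to reduce this corollary to a general structural fact about almost holomorphic modular forms. By the preceding proposition, we already know that $\F^{\Etau}_g[\infty][t^{k_1}\omega,\cdots,t^{k_n}\omega]$, viewed as a function of $\tau$, is an almost holomorphic modular form of weight $2g-2+2n$ on $SL(2,\Z)$. Thus the entire content of the corollary is to verify that the $\bar\tau\to\infty$ limit of an almost holomorphic modular form (i) exists in the naive sense and (ii) produces a quasi-modular form of the same weight. Both assertions are standard consequences of the Kaneko--Zagier structure theory.

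First I would write out the general form of an almost holomorphic modular form $\hat f$ of weight $k$, as recalled just before Lemma \ref{almost modular forms}:
\begin{equation*}
\hat f(\tau,\bar\tau)=\sum_{m=0}^{M} f_m(\tau)\,(\Im\tau)^{-m},
\end{equation*}
with each $f_m(\tau)$ holomorphic in $\tau$. Since $(\Im\tau)^{-m}\to 0$ for every $m\ge 1$ as $\bar\tau\to\infty$ with $\tau$ fixed, the limit trivially exists and equals $f_0(\tau)$. Applied to $\hat f=\F^{\Etau}_g[\infty][t^{k_1}\omega,\cdots,t^{k_n}\omega]$, this gives the existence of $\lim_{\bar\tau\to\infty}\F^{\Etau}_g[\infty][t^{k_1}\omega,\cdots,t^{k_n}\omega]$ and identifies it as the $f_0$ component of weight $2g-2+2n$.

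Next I would invoke the Kaneko--Zagier theorem, which states that the map $\hat f\mapsto f_0$ is a weight-preserving isomorphism from the ring of almost holomorphic modular forms onto the ring of quasi-modular forms on $SL(2,\Z)$. Concretely, this boils down to expanding the modular transformation law
\begin{equation*}
\hat f\!\left(\tfrac{A\tau+B}{C\tau+D}\right)=(C\tau+D)^{k}\hat f(\tau)
\end{equation*}
in powers of $(\Im\tau)^{-1}$, using $\Im\bigl(\tfrac{A\tau+B}{C\tau+D}\bigr)=\Im(\tau)/|C\tau+D|^2$, and comparing coefficients; the leading term in the expansion forces $f_0$ to satisfy the standard quasi-modular transformation law involving $C/(C\tau+D)$, of weight $k$.

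The only thing to check is that there is nothing non-generic about our particular $\hat f$ that would obstruct this: one needs $f_0(\tau)$ to be holomorphic in $\tau$ (not just smooth) and to have at most polynomial growth at the cusp. Holomorphicity is clear because each $f_m$ in the expansion arising from the graph-integral description of Lemma \ref{almost modular forms} is a polynomial in derivatives of $\wp$ and in $E_2(\tau)$ with holomorphic coefficients; the growth condition follows from the fact that the graph integrals are finite sums of products of the propagator's holomorphic and $1/\Im\tau$ pieces, both of which have the required growth at the cusp. These verifications are routine once the graph-integral representation from Lemma \ref{almost modular forms} and the expansion of $\PP_0^\infty$ in \eqref{limit propagator} are in hand, and I do not expect any real obstacle; the only mildly delicate point is bookkeeping the weight under the $\bar\tau\to\infty$ limit, which is already handled by the previous proposition.
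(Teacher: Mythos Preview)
Your proposal is correct and matches the paper's approach exactly: the corollary has no separate proof in the paper because it is intended to follow immediately from the preceding proposition together with the Kaneko--Zagier isomorphism between almost holomorphic modular forms and quasi-modular forms (cited there as \cite{almost-modular-form}), which is precisely what you invoke. Your additional checks on holomorphicity and growth are more detail than the paper supplies, but they are in the same spirit and cause no issues.
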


\begin{rmk}
In \cite{Si-Kevin}, we give another proof of the modularity of $\F_g^{\Etau}$ using obstruction calculus and interpret the $\bar\tau\to\infty$  from the monodromy split filtration around large complex limit. Here we use the graph integrals as an explicit realization.
\end{rmk}

\subsubsection{Cohomological localization}
Let $A$ (resp.$B$) denote the homology class of the segment $[0,1]$ (resp.$[0,\tau]$) on the elliptic curve $\Etau$. Let $\alpha_A $ (resp.$\alpha_B$) be the 1-form representing the corresponding Poincare dual. As cohomology class, we have
$$
    \empty [\omega\vdash dw]=\left[{-i\over 2\Im \tau} d\bar w\right]=\left[{i\over 2\Im \tau}(\bar\tau \alpha_A- \alpha_B)\right]
$$

Consider the isomorphism of complexes
\begin{eqnarray*}
    \Phi: \left(\PV^{1,*}_{\Etau}\oplus t \PV^{0,*}_{\Etau}, Q\right)&{\to}& \left(\mc A^{*,*}, d\right)\\
      \alpha+t\beta&\to& (\alpha+\beta)\vdash dw
\end{eqnarray*}
where $\mc A^{*,*}$ is the space of smooth differential forms on $\Etau$. Let
$$
   \omega_A=\Phi^{-1}(\alpha_A), \ \ \omega_B=\Phi^{-1}(\alpha_B)
$$

It follows that there exists $\beta\in \PV^{1,*}\oplus t \PV^{0,*}$ such that
$$
    \omega={i\over 2\Im \tau}(\bar\tau \omega_A- \omega_B)+Q\beta
$$

Let $A_1, \cdots, A_n$ (resp.$B_1,\cdots, B_n$) be disjoint cycles on $\Etau$ which lie in the same homology class of $A$ (resp.$B$). The quantum master equation at $L=\infty$ says
\begin{align*}
Q\F^{\Etau}_g[\infty]=0
\end{align*}
which implies that
$$
     \F^{\Etau}_g[\infty][t^{k_1}\omega,\cdots,t^{k_n}\omega]= \F^{\Etau}_g[\infty] [t^{k_1}{i\over 2\Im \tau}(\bar\tau \omega_{A_1}- \omega_{B_1}),\cdots,t^{k_n}{i\over 2\Im \tau}(\bar\tau \omega_{A_n}- \omega_{B_n})]
      $$
Under the limit $\bar\tau\to \infty$, we have
\begin{align}
   \lim\limits_{\bar\tau\to \infty}\F^{\Etau}_g[\infty][t^{k_1}\omega,\cdots,t^{k_n}\omega]=\lim\limits_{\bar\tau\to \infty}\F^{\Etau}_g[\infty][t^{k_1}\omega_{A_1},\cdots,t^{k_n}\omega_{A_n}]
\end{align}

Since the supports of $\omega_{A_i}$'s are disjoint, and the propagator is concentrated at $\PV_{\Etau}^{0,0}$, the RHS can be represented as Feynman graph integrals
\begin{align}\label{BCOV-Feynman-graph}
 &\lim\limits_{\bar\tau\to \infty}\sum_{g\geq 0}\hbar^{g-1}\F^{\Etau}_g[\infty][t^{k_1}\omega_{A_1},\cdots,t^{k_n}\omega_{A_n}] \nonumber\\
=&\sum\limits_{\substack{\Gamma: \text{connected graph}\\ |V(\Gamma)|=n}}W_\Gamma\left({\hbar \PP^\infty_{0}(\tau;\infty)}; {1\over \hbar}\int_{A_1}dw \mc J^{(k_1)}, \cdots, {1\over \hbar}\int_{A_n}dw \mc J^{(k_n)}\right)
\end{align}
where we sum over all connected Feynman graph integrals with $n$ vertices, with propagator $\hbar \PP_0^\infty(\tau;\infty)$, and the $i$th vertex given by ${1\over \hbar}\int_{A_i}dw \mc J^{(k_i)}$. Here $\mc J^{(k)}=\sum\limits_{g\geq 0}\hbar^g \mc J^{(k)}_g$, and $\mc J^{(k)}_g$ is a lagrangian on $\PV^{0,0}_{\Etau}$ which contains $2g$ holomorphic derivatives by Proposition \ref{holomorphic derivatives}.

We will use $\alpha$ to represent a general element in $\PV^{0,0}_{\Etau}$, and write
\begin{align}\label{alpha-definition}
    \alpha^{(n)}=\left(\sqrt{\hbar}{\pa\over \pa w}\right)^n\alpha, \ \ \alpha^{(0)}\equiv \alpha
\end{align}
$\mc J^{(k)}(\alpha)$ can be naturally viewed  as an element in $\C[\alpha,\alpha^{(1)},\cdots]/ \Im D$, where
\begin{eqnarray*}
    D=\sum_{i=0}^\infty \alpha^{(i+1)}{\pa\over \pa \alpha^{(i)}}
\end{eqnarray*}
represents the operator of total derivative. The initial condition is determined by the classical BCOV action, which says
\begin{eqnarray*}
    \mc J^{(k)}_0(\alpha)={1\over (k+1)!}\alpha^{k+2}
\end{eqnarray*}

If we assign the following degree
\begin{eqnarray*}
   \deg \alpha^{(n)}=n+1
\end{eqnarray*}
then the Hodge weight condition implies
\begin{eqnarray*}
      \deg \mc J^{(k)}=(k+2)
\end{eqnarray*}

In particular, the above degree constraint tells us that
\begin{eqnarray*}
    \mc J^{(0)}={1\over 2}\alpha^2
\end{eqnarray*}
and
\begin{eqnarray*}
   \mc J^{(1)}={1\over 3!}\alpha^3
\end{eqnarray*}
where the other possible terms don't contribute since they are in the image of $D$.

\subsubsection{Theory on $\C^*$ and the commutativity property}
We explore the properties of $\mc J^{(k)}$ by considering the BCOV theory on $\C^*$.

Since the quantization problem is local \cite{Costello-book} and $\F^{\Etau}[L]$ is constructed from the translation invariant quantization on $\C$ from local chiral functionals, the same local functionals define a scale-invariant quantization on $\C^*$. Let's call it $\F^{\C^*}[L]$.

We will use $z$ to denote the coordinate on $\C^*$
$$
   z=\exp(2\pi i w)
$$
where $w$ is the coordinate on $\C$. The holomorphic volume form is ${dz\over z}$ which defines the trace operator on polyvector fields. We will use $\OO_{\C^*}$ to denote the space of holomorphic functions on $\C^*$.  Let $C_r$ be the circle $\{z\in \C^*| |z|=r\}, r>0$. We associate $C_r$ a non-negative smooth function $\rho_{C_r}$, which takes constant value outside a small neighborhood of $C_{r}$, such that  $\rho_{C_r}=1$ when $|z|\gg r$ and $\rho_{C_r}=0$ when $|z|\ll r$. Note that $d\rho_{C_r}$ is the generator of $\HH^1_c(\C^*)$, with
$$
     \int_{\C^*} d(\rho_C)\wedge {dz\over 2\pi i z}=1
$$
and $d\rho_{C_r}$ represents the Poincare dual of $C_{r}$. Let $\omega_{C_r}$ be the following polyvector field
$$
       \omega_{C_r}=Q(\rho_{C_r} z\pa_z)\in \PV^{1,1}_{\C^*}\oplus t \PV^{0,0}_{\C^*}
$$
and consider
$$
        {\mc O}_{C_{r_1}, C_{r_2}}^{(k_1,k_2)}[L]= e^{-F[L]/\hbar}{\pa\over \pa (t^{k_1}\omega_{C_{r_1}})}{\pa\over \pa (t^{k_2}\omega_{C_{r_2}})}e^{F[L]/\hbar}
$$
where $r_1\neq r_2$ such that the supports of $d\rho_{C_{r_1}}$ and $d\rho_{C_{r_2}}$ are disjoint.
\begin{lem}
The effective action $\F^{\C^*}[L]+\delta {\mc O}_{C_{r_1}, C_{r_2}}^{(k_1,k_2)}[L]$ satisfies renormalization group flow equation and quantum master equation, where $\delta$ is an odd variable with $\delta^2=0$.
\end{lem}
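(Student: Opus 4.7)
The plan is the following. Set $\phi_i = t^{k_i}\omega_{C_{r_i}}$ for $i=1,2$, so by the definition of $\mc O$ one has
\[
\mc O_{C_{r_1}, C_{r_2}}^{(k_1,k_2)}[L]\cdot e^{\F^{\C^*}[L]/\hbar} \;=\; \pa_{\phi_1}\pa_{\phi_2}\, e^{\F^{\C^*}[L]/\hbar}.
\]
Since $\delta^2=0$, one has $e^{(\F^{\C^*}[L]+\delta\mc O[L])/\hbar} = (1+\delta\mc O[L]/\hbar)\, e^{\F^{\C^*}[L]/\hbar}$. Both assertions therefore reduce to showing how $\pa_{\phi_1}\pa_{\phi_2}$ commutes with $\exp(\hbar\pa_{\PP_\epsilon^L})$ and with $Q+\hbar\Delta_L$ acting on $e^{\F^{\C^*}[L]/\hbar}$.

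For the renormalization group flow equation, I would first observe that $\pa_{\phi_i}$, which plugs the fixed element $\phi_i\in\E$ into one symmetric tensor slot of a multilinear functional, commutes with $\hbar\pa_{\PP_\epsilon^L}$, since the propagator contracts a distinct pair of slots against the fixed kernel and the two operations act on disjoint tensor indices. Applying $\pa_{\phi_1}\pa_{\phi_2}$ to the identity $e^{\F^{\C^*}[L]/\hbar} = \exp(\hbar\pa_{\PP_\epsilon^L})\, e^{\F^{\C^*}[\epsilon]/\hbar}$ (the RG equation for $\F^{\C^*}[L]$) and multiplying by an appropriate factor of $\delta$ yields
\[
(1+\delta\mc O[L]/\hbar)\, e^{\F^{\C^*}[L]/\hbar} \;=\; \exp(\hbar\pa_{\PP_\epsilon^L})\, (1+\delta\mc O[\epsilon]/\hbar)\, e^{\F^{\C^*}[\epsilon]/\hbar},
\]
which is the RG equation for $\F^{\C^*}[L]+\delta\mc O[L]$.

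For the quantum master equation, the key input is that each $\phi_i$ is $Q$-exact: by construction $\omega_{C_{r_i}} = Q(\rho_{C_{r_i}} z\pa_z)$, and since $t$ is $Q$-invariant we have $\phi_i = Q(t^{k_i}\rho_{C_{r_i}} z\pa_z)$, hence $Q\phi_i = 0$. The general identity $[Q, \pa_\phi] = \pm\pa_{Q\phi}$ then gives $[Q,\pa_{\phi_i}]=0$. Combined with $[\Delta_L,\pa_{\phi_i}]=0$, which holds because $\Delta_L$ contracts a distinct pair of slots against a kernel independent of the insertion, I conclude
\[
(Q+\hbar\Delta_L)\, \pa_{\phi_1}\pa_{\phi_2}\, e^{\F^{\C^*}[L]/\hbar} \;=\; \pa_{\phi_1}\pa_{\phi_2}\, (Q+\hbar\Delta_L)\, e^{\F^{\C^*}[L]/\hbar} \;=\; 0,
\]
the last equality being the QME already satisfied by $\F^{\C^*}[L]$. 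This is precisely the QME for $\F^{\C^*}[L]+\delta\mc O[L]$ modulo $\delta^2$.

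The main subtlety I anticipate is bookkeeping of signs in the graded setup for $\PV$-valued functionals and the formal variable $t$, and confirming that the directional derivatives $\pa_{\phi_i}$ extend compatibly across the $\hbar$-filtration on $\OO(\E_{\C^*})[[\hbar]]$. The disjointness of the supports of $d\rho_{C_{r_1}}$ and $d\rho_{C_{r_2}}$ plays no role in the algebraic argument; rather, it is needed only to ensure that $\mc O[L]$ is a well-defined element of $\OO(\E_{\C^*})[[\hbar]]$, with the two insertions cleanly separated.
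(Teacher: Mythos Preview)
Your proposal is correct and follows essentially the same approach as the paper: both arguments hinge on the commutation of the directional derivatives $\pa_{\phi_i}$ with $\exp(\hbar\pa_{\PP_\epsilon^L})$ for the renormalization group flow, and with $Q+\hbar\Delta_L$ for the quantum master equation, the latter using $Q\phi_i=0$. Your additional unpacking via $e^{(\F+\delta\mc O)/\hbar}=(1+\delta\mc O/\hbar)e^{\F/\hbar}$ and the explicit justification of the commutators is a helpful elaboration, but the underlying argument is the same.
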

\begin{proof}
\begin{eqnarray*}
     {\mc O}_{C_{r_1}, C_{r_2}}^{(k_1,k_2)}[L] e^{\F^{\C^*}[L]/\hbar}&=&{\pa\over \pa (t^{k_1}\omega_{C_{r_1}})}{\pa\over \pa (t^{k_2}\omega_{C_{r_2}})}e^{\F^{\C^*}[L]/\hbar}\\
     &=&e^{\hbar\pa_{P_\epsilon^L}}{\pa\over \pa (t^{k_1}\omega_{C_{r_1}})}{\pa\over \pa (t^{k_2}\omega_{C_{r_2}})}e^{\F^{\C^*}[\epsilon]/\hbar}\\
     &=&e^{\hbar\pa_{P_\epsilon^L}}{\mc O}_{C_{r_1}, C_{r_2}}^{(k_1,k_2)}[\epsilon] e^{\F^{\C^*}[\epsilon]/\hbar}
\end{eqnarray*}
which proves the renormalization group flow equation. Since $Q(t^{k_1}\omega_{C_{r_1}})=Q(t^{k_2}\omega_{C_{r_2}})=0$,
\begin{eqnarray*}
     \bracket{Q+\hbar\Delta_L}{\mc O}_{C_{r_1}, C_{r_2}}^{(k_1,k_2)}[L] e^{\F^{\C^*}[L]/\hbar}&=&\bracket{Q+\hbar\Delta_L}{\pa\over \pa (t^{k_1}\omega_{C_{r_1}})}{\pa\over \pa (t^{k_2}\omega_{C_{r_2}})}e^{\F^{\C^*}[L]/\hbar}\\
     &=&{\pa\over \pa (t^{k_1}\omega_{C_{r_1}})}{\pa\over \pa (t^{k_2}\omega_{C_{r_2}})}\bracket{Q+\hbar\Delta_L}e^{\F^{\C^*}[L]/\hbar}\\&=&0
\end{eqnarray*}
which proves the quantum master equation.
\end{proof}
We will consider the restriction of ${\mc O}_{C_{r_1}, C_{r_2}}^{(k_1,k_2)}[L]$ as a functional on holomorphic functions $\OO_{\C^*}\subset \PV^{0,0}_{\C^*}$ in the following discussion, and we still denoted it by ${\mc O}_{C_{r_1}, C_{r_2}}^{(k_1,k_2)}[L]$. Since $\omega_{C_{r_1}}, \omega_{C_{r_2}}$ have compact support, we can take $L\to \infty$ to obtain $\mc O_{C_{r_1}, C_{r_2}}^{(k_1,k_2)}[\infty]$ as a functional on $\OO_{\C^*}$. Note that elements in $\OO_{\C^*}$ lie in the kernel of $Q=\bar\pa-t\pa$. Quantum master equation implies that $\mc O_{C_{r_1}, C_{r_2}}^{(k_1,k_2)}[\infty]$ only depends on the homology class of $C_{r_1}, C_{r_2}$ and the integers $k_1,k_2$ if we restrict on $\OO_{\C^*}$. Following the convention as in Equation (\ref{BCOV-Feynman-graph}), we have the following
\begin{lem} Restricting on $\OO_{\C^*}$, then
\begin{align}
  {\mc O}_{C_{r_1}, C_{r_2}}^{(k_1,k_2)}[\infty]=\exp\left( \hbar \pa_{\PP_0^\infty} \right)\left({1\over \hbar}\int_{C_1}{dz\over 2\pi i z} \mc J^{(k_1)}{1\over \hbar}\int_{C_2}{dz\over 2\pi i z} \mc J^{(k_2)}\right)
\end{align}
where $\PP_\epsilon^L$ is the regularized BCOV propagator on $\C^*$
\begin{eqnarray*}
      \PP_\epsilon^L(z_1,z_2)=-{1\over \pi}\int_\epsilon^L {dt\over 4\pi t}\sum_{n\in \Z}\left({\bar w_{1}-\bar w_2+n\over 4t}\right)^2e^{-{|w_1-w_2+n|^2/4t}}
\end{eqnarray*}
and
\begin{eqnarray*}
    \PP_0^\infty(z_1,z_2)= \lim_{\substack{\epsilon\to 0\\ L\to \infty}}\PP_\epsilon^L(z_1,z_2)={1\over (2\pi i)^2}\sum_{n\in \Z}{1\over (w_1-w_2-n)^2}={z_1z_2\over (z_1-z_2)^2}
\end{eqnarray*}
here $z_k=\exp(2\pi i w_k), k=1,2$.
\end{lem}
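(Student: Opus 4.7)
The plan is a Feynman-diagrammatic computation exploiting the fact, recorded in Theorem \ref{admissible-quantization}, that $\F^{\C^*}[L]$ is built from the admissible chiral local functionals whose underlying chiral Lagrangians are the $\mc J^{(k)}_g$. Writing $e^{\F^{\C^*}[L]/\hbar}=\lim_{\epsilon\to 0}\exp(\hbar\pa_{\PP_\epsilon^L})\exp(S/\hbar)$ with $S=\sum_g\hbar^g S_g$ admissible, and noting that $\pa/\pa(t^{k_i}\omega_{C_{r_i}})$ commutes with the field-independent contraction $\pa_{\PP_\epsilon^L}$, first I would produce
\begin{align*}
\mc O_{C_{r_1},C_{r_2}}^{(k_1,k_2)}[L]=e^{-\F^{\C^*}[L]/\hbar}\lim_{\epsilon\to 0}\exp(\hbar\pa_{\PP_\epsilon^L})\left(\tfrac{T_1 T_2}{\hbar^2}+\tfrac{T_{12}}{\hbar}\right) e^{S/\hbar},
\end{align*}
where $T_i=\pa S/\pa(t^{k_i}\omega_{C_{r_i}})$ and $T_{12}$ is the mixed second derivative. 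Because $\omega_{C_{r_i}}$ is concentrated near $C_{r_i}$, taking $L\to\infty$ is harmless, and the expression becomes a weighted sum of Feynman graphs with two distinguished external insertions at $\omega_{C_{r_1}},\omega_{C_{r_2}}$, internal vertices from the $S_g$, and propagator $\PP_0^\infty$.

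The key step is to replace each $\omega_{C_r}$-insertion by a contour integral over $C_r$. Writing $\omega_{C_r}=Q(\rho_{C_r}\,z\pa_z)$ with $Q=\bar\pa-t\pa$ and using that wedging $d\bar z$ against $\bar\pa\rho_{C_r}$ vanishes, only the $-t\pa(\rho_{C_r}\,z\pa_z)$ piece survives when paired with the $\wedge_{d\bar z}$-structure of an admissible vertex. Integrating by parts in $z$ transfers the derivative onto the remaining chiral Lagrangian, and because $d\rho_{C_r}$ is a bump form Poincar\'e-dual to $C_r$, the resulting volume integral localizes to a contour integral; concretely, for any admissible $S_g$ and any holomorphic inputs $\alpha_i\in\OO_{\C^*}$,
\begin{align*}
\frac{\pa S_g}{\pa(t^k\omega_{C_r})}[\alpha_2,\ldots,\alpha_n]=\int_{C_r}\frac{dz}{2\pi iz}\,\mc J^{(k)}_g(\alpha_2,\ldots,\alpha_n),
\end{align*}
with the analogous identity for $T_{12}$ (two contour integrals, over $C_{r_1}$ and $C_{r_2}$). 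Independence of the representative $\rho_{C_r}$ (hence dependence only on the homology class of $C_r$) follows from the quantum master equation $Q\F^{\C^*}[\infty]=0$ together with $Q\omega_{C_r}=0$, which is exactly the argument underlying the well-definedness of $\mc O_{C_{r_1},C_{r_2}}^{(k_1,k_2)}[\infty]$ on $\OO_{\C^*}$.

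For the final step, I restrict external inputs to $\OO_{\C^*}\subset\PV^{0,0}_{\C^*}$, so that only the $\PV^{0,0}\otimes\PV^{0,0}$ component of $\PP_0^\infty$ contributes to each internal propagator line; this component is precisely $z_1z_2/(z_1-z_2)^2$ as in the lemma statement. Combining this with the contour localization of Step two identifies the LHS, graph by graph, with the RHS generating function $\exp(\hbar\pa_{\PP_0^\infty})\bigl(\tfrac{1}{\hbar}\int_{C_{r_1}}\tfrac{dz}{2\pi i z}\mc J^{(k_1)}\cdot\tfrac{1}{\hbar}\int_{C_{r_2}}\tfrac{dz}{2\pi i z}\mc J^{(k_2)}\bigr)$. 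The main obstacle is the second step: the careful $d\bar z$/chiral bookkeeping needed to see that the $Q$-exact insertion $\omega_{C_r}$ collapses, via Stokes' theorem, to a contour integral of the chiral Lagrangian $\mc J^{(k)}$. Once this local identification is established, everything else reduces to formal Feynman-graph comparison between the two sides.
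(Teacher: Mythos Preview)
Your approach is essentially the paper's: commute the derivatives through $e^{\hbar\pa_{\PP_\epsilon^L}}$, identify $\pa S/\pa(t^{k}\omega_{C_r})$ with the contour integral of $\mc J^{(k)}$, and then restrict to $\OO_{\C^*}$. Two points, however, are handled more cleanly in the paper and are genuine gaps in your outline.

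First, the mixed derivative $T_{12}=\pa^2 S/\pa(t^{k_1}\omega_{C_{r_1}})\pa(t^{k_2}\omega_{C_{r_2}})$ is simply \emph{zero}, not ``two contour integrals''. The reason is that $S$ is a local functional and the supports of $\omega_{C_{r_1}}$ and $\omega_{C_{r_2}}$ are disjoint; a polydifferential Lagrangian evaluated at inputs with disjoint support vanishes identically. This is why the paper passes directly from the second derivative of $e^{S/\hbar}$ to the product $(\tfrac{1}{\hbar}\pa S/\pa(t^{k_1}\omega_{C_{r_1}}))(\tfrac{1}{\hbar}\pa S/\pa(t^{k_2}\omega_{C_{r_2}}))$ with no cross term.

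Second, after writing
\[
\mc O_{C_{r_1},C_{r_2}}^{(k_1,k_2)}[\infty]\,e^{\F^{\C^*}[\infty]/\hbar}
= e^{\hbar\pa_{\PP_0^\infty}}\Bigl(\tfrac{1}{\hbar}\textstyle\int_{C_1}\mc J^{(k_1)}\Bigr)\Bigl(\tfrac{1}{\hbar}\textstyle\int_{C_2}\mc J^{(k_2)}\Bigr)e^{S/\hbar},
\]
you still have spectator $S$-vertices on the right that are not present in the claimed formula. Your ``graph by graph'' comparison does not explain why these extra vertices decouple. The paper disposes of them with a degree (admissibility) argument: every admissible $S_g$ requires exactly one $d\bar z$ among its inputs; the two distinguished vertices already spend their $d\bar z$ on the $\omega_{C_{r_i}}$-insertions, the propagator lives purely in $\PV^{0,0}\otimes\PV^{0,0}$, and external inputs in $\OO_{\C^*}$ carry no $d\bar z$. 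Hence any additional $S$-vertex has no source for its required $d\bar z$ and contributes zero. Once you add these two observations, your argument coincides with the paper's.
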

\begin{proof}
Let $S=\lim\limits_{L\to 0}\F^{\C^*}[L]$ be the classical local functional which is chiral by construction (see Prop \ref{admissible-quantization}). Since $S$ is local and the supports of ${ t^{k_1}\omega_{C_{r_1}}}$ and $t^{k_2}\omega_{C_{r_2}}$ are disjoint, we have
\begin{align*}
             {\mc O}_{C_{r_1}, C_{r_2}}^{(k_1,k_2)}[\infty] e^{\F^{\C^*}[\infty]/\hbar}&={\pa\over \pa (t^{k_1}\omega_{C_{r_1}})}{\pa\over \pa (t^{k_2}\omega_{C_{r_2}})} e^{\hbar \pa_{\PP_0^\infty}} e^{S/\hbar}\\
             &=e^{\hbar \pa_{\PP_0^\infty}} \bracket{{1\over \hbar}{\pa\over \pa (t^{k_1}\omega_{C_{r_1}})}S } \bracket{{1\over \hbar }{\pa\over \pa (t^{k_2}\omega_{C_{r_2}})}S}e^{S/\hbar}\\
             &=e^{\hbar \pa_{\PP_0^\infty}} \bracket{{1\over \hbar}\int_{C_1}{dz\over 2\pi i z} \mc J^{(k_1)} }\bracket{{1\over \hbar}\int_{C_2}{dz\over 2\pi i z} \mc J^{(k_2)}}e^{S/\hbar}
\end{align*}
If we restrict to $\OO_{\C^*}$ as the inputs, the degree constraint will imply that
\begin{align*}
  {\mc O}_{C_{r_1}, C_{r_2}}^{(k_1,k_2)}[\infty]=e^{\hbar \pa_{\PP_0^\infty}}\left({1\over \hbar}\int_{C_1}{dz\over 2\pi i z} \mc J^{(k_1)}{1\over \hbar}\int_{C_2}{dz\over 2\pi i z} \mc J^{(k_2)}\right)
\end{align*}
\end{proof}

Since ${\mc O}_{C_{r_1}, C_{r_2}}^{(k_1,k_2)}[\infty]$ only depends on the homology class of $C_{r_1}, C_{r_2}$, we have
\begin{eqnarray*}
      \exp\left( \hbar \pa_{\PP_0^\infty} \right)\left(\int_{C_{r_1}}{dz_1\over 2\pi iz_1} \mc J^{(k_1)}\int_{C_{r_2}-C_{r_3}}{dz_2\over 2\pi i z_2} \mc J^{(k_2)}\right)=0
\end{eqnarray*}
where $0<r_3<r_1<r_2$. We call this the \emph{commutativity property}. The readers  familiar with conformal field theory will realize that this is precisely an OPE relation for operators in free chiral bosonic system. The contraction with the propagator $\PP_0^\infty$ produces a rational function with poles at the diagonal and $\int_{C_{r_2}-C_{r_3}}$ computes precisely the residue \cite{Polchinski-String}.

\begin{lem}\label{uniqueness}$\mc J^{(k)}$'s are uniquely determined (up to total derivative) by the initial conditions $\mc J^{(k)}(\alpha)={1\over (k+2)!}\alpha^{k+2}+ O(\hbar), \alpha\in \OO_{\C^*}$ and the above commutativity property.
\end{lem}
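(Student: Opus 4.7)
The plan is to prove uniqueness by induction on the $\hbar$-order (equivalently, on the genus $g$ in the expansion $\mc J^{(k)} = \sum_{g \geq 0} \hbar^g \mc J^{(k)}_g$), using the commutativity property as a recursive constraint. First I would pin down the shape of the ambiguity space at each $(k, g)$. The degree condition $\deg \mc J^{(k)} = k+2$ (with $\deg \alpha^{(n)} = n+1$) together with Proposition \ref{holomorphic derivatives} forces $\mc J^{(k)}_g$ to be built from exactly $m = k+2-2g$ copies of $\alpha$ carrying a total of $2g$ holomorphic derivatives. The subspace of $\C[\alpha, \alpha^{(1)}, \ldots]/\Im D$ consisting of such monomials is finite dimensional, so each inductive step takes place in a small explicit quotient.

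Next I would recast the commutativity property as a residue equation. Since $\PP_0^\infty(z_1,z_2) = \frac{z_1 z_2}{(z_1-z_2)^2}$ has only a double pole on the diagonal, each contraction $\hbar\,\pa_{\PP_0^\infty}$ and its $z$-derivatives produce poles in $z_1 - z_2$ whose coefficients are polynomials in the derivatives of $\alpha$. Deforming the contour $C_{r_2} - C_{r_3}$ across the sandwiched circle $C_{r_1}$, the integration $\int_{C_{r_2}-C_{r_3}} \frac{dz_2}{2\pi i z_2}$ reduces to the residue at $z_2 = z_1$. Thus the commutativity condition becomes the statement that the propagator-OPE residue of $\mc J^{(k_1)}$ with $\mc J^{(k_2)}$, integrated against $\frac{dz_1}{2\pi i z_1}$, vanishes modulo $\Im D$.

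For the induction proper, the base case $g = 0$ is the given initial condition $\mc J^{(k)}_0 = \frac{1}{(k+2)!} \alpha^{k+2}$. Suppose $\mc J^{(k')}_{g'}$ is uniquely determined (modulo $\Im D$) for all $g' < g$ and all $k'$, and suppose $\mc J^{(k)}$ and $\tilde{\mc J}^{(k)}$ are two candidate solutions that first differ at order $\hbar^g$ in some component $k$. I would then apply the commutativity property with $(k_1, k_2) = (0, k)$: because $\mc J^{(0)} = \frac{1}{2}\alpha^2$ is purely classical (no higher-$\hbar$ corrections), contractions of $\alpha$ from $\mc J^{(0)}$ with fields in $\mc J^{(k)}_g$ act as an explicit first-order differential-operator-like map on the ambiguity space. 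Extracting the appropriate $\hbar$-coefficient in the residue equation yields a linear equation $L(\mc J^{(k)}_g - \tilde{\mc J}^{(k)}_g) = 0$ in the finite-dimensional quotient, where $L$ is built from residues of $\pa_{z_2}^n \PP_0^\infty(z_1,z_2)$.

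The main obstacle is verifying that $L$ is injective on this finite-dimensional quotient. I would check this directly on the monomial basis $\alpha^{a_0}(\alpha^{(1)})^{a_1}\cdots$, using $\pa_{z_2}^n \frac{z_1 z_2}{(z_1-z_2)^2}$ and the residue formula to read off how $L$ acts. A cleaner alternative—and probably the route the paper intends—is to exploit the vertex-algebra interpretation from Section \ref{mirror-boson-fermion}: the operators $\int \frac{dz}{2\pi i z} \mc J^{(k)}$ are realized as modes of composite vertex operators on the free bosonic Fock space, and the commutativity property is precisely the mutual-locality condition. A standard reconstruction-type argument for vertex operator algebras then shows that a family of mutually local vertex operators with prescribed classical limits is determined uniquely (up to total derivatives), which closes the argument.
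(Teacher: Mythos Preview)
Your overall framework---recasting commutativity as a residue/OPE condition and extracting a linear constraint on the ambiguity---is exactly right, and matches the paper's strategy. But your specific choice of test functional $\mc J^{(0)}=\tfrac12\alpha^2$ fails, and this is a genuine gap.

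Run your own residue computation with $\mc J^{(0)}$ at $z_2$. A single contraction pairs $\alpha^{(n)}(z_1)$ in $\mc J^{(k)}$ with one copy of $\alpha(z_2)$; after the residue this replaces $\alpha^{(n)}$ by $\sqrt{\hbar}\,\alpha^{(n+1)}$. Summing over $n$, the resulting first-order operator is precisely $\sqrt{\hbar}\,D=\sqrt{\hbar}\sum_n\alpha^{(n+1)}\tfrac{\partial}{\partial\alpha^{(n)}}$, which is \emph{identically zero} on $A=\C[\alpha^{(0)},\alpha^{(1)},\ldots]/\Im D$. (This is the vertex-algebra fact that $\int\tfrac12{:}\alpha^2{:}$ is the translation mode and commutes with every zero mode.) So the linear map $L$ you extract at leading order is zero on the quotient, and the inductive step carries no information. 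The two-contraction piece coming from $\mc J^{(0)}$ is a second-order operator that kills all fields and leaves a pure number, which again does not separate the finite-dimensional ambiguity space.

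The paper avoids this by testing against the \emph{cubic} $\mc J^{(1)}=\tfrac{1}{3!}\alpha^3$. The leading-$\sqrt{\hbar}$ piece of the commutator is then a nontrivial operator
\[
E=\tfrac12\sum_{k,l\ge 0}\tbinom{k+l}{k}\alpha^{(k)}\alpha^{(l)}\frac{\partial}{\partial\alpha^{(k+l-1)}}
+\sum_{k,l\ge 0}\frac{(k+1)!(l+1)!}{(k+l+3)!}\,\alpha^{(k+l+3)}\frac{\partial}{\partial\alpha^{(k)}}\frac{\partial}{\partial\alpha^{(l)}},
\]
and the heart of the argument (Lemma \ref{existence}) is to show $\ker E$ is one-dimensional in each fixed degree of $A$, via a filtration by the number of $\alpha^{(1)}$'s. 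This is not a formality: one really needs the cubic interaction so that the first-order piece $E_1'$ is a genuine weight operator on the graded pieces rather than $D$. Your fallback to a generic ``reconstruction-type argument'' for mutually local vertex operators does not close the gap either; mutual locality plus a prescribed classical limit does not by itself force uniqueness without an injectivity statement of exactly this kind.
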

\begin{proof} If the propagator $\hbar \PP_0^\infty$ connects one $\alpha^{(n)}$ from $\mc J^{(k_1)}(\alpha)$ and one $\alpha^{(m)}$ from $\mc J^{(k_2)}(\alpha)$, we see from (\ref{alpha-definition}) that it replaces the two terms by
$$
     \hbar \left(\sqrt{\hbar}z_1\pa_{z_1}\right)^n  \left(\sqrt{\hbar}z_2\pa_{z_2}\right)^m {z_1z_2\over (z_1-z_2)^2}
     =(-1)^{m+1}\left(\sqrt{\hbar}z_1\pa_{z_1}\right)^{n+m+1}\left({\sqrt{\hbar}z_2\over z_1-z_2} \right)
$$

Using residue we see that
$$
      \exp\left( \hbar \pa_{\PP_0^\infty} \right)\left(\int_{C_1}{dz_1\over 2\pi iz_1} \mc J^{(k_1)}\int_{C_2-C_2^\prime}{dz_2\over 2\pi i z_2} \mc J^{(k_2)}\right)
$$
gives rise to a local functional $\int_{C_1}{dz\over 2\pi i z} I$ where $I(\alpha)\in \C[\alpha, \alpha^{(1)},\cdots][\sqrt{\hbar}, \sqrt{\hbar}^{-1}]$. Let
$$
      u^{(n)}(z_1,z_2)=-\left(\sqrt{\hbar}z_1\pa_{z_1} \right)^{n}\left(\sqrt{\hbar} z_2\over z_1-z_2 \right)
$$

We have
$$
   u^{(n)} u^{(m)}={n! m!\over (n+m+1)!}u^{(n+m+1)}+\sqrt{\hbar} f(u^{(k)},\sqrt{\hbar})
$$
where $f$ is a polynomial which is linear in $u^{(k)}$'s. This can be proved easily by induction on $n$.

It follows from that $I\in \sqrt{\hbar}\C[\alpha, \alpha^{(1)},\cdots][\sqrt{\hbar}]$. We consider the leading $\sqrt{\hbar}$ term in  $
      \exp\left( \hbar \pa_{P_0^\infty} \right)\left(\int_{C_1}{dz_1\over 2\pi iz_1} \mc J^{(k_1)}\int_{C_2-C_2^\prime}{dz_2\over 2\pi i z_2} \mc J^{(1)}\right)
$, where $\mc J^{(1)}(\alpha)={1\over 3!}\alpha^3$. If there's only one propagator, it replaces each term $\alpha^{(n)}$ in $\mc J^{(k_1)}$ by
$$
   {1\over 2}\int_{C_{z_1}}{dz_2\over 2\pi i z_2} \left(\sqrt{\hbar}z_1\pa_{z_1}\right)^{n+1} \left( {\sqrt\hbar z_2\over z_1-z_2} \alpha(z_2)^2 \right)
   =\sqrt{\hbar}\left(\sqrt{\hbar}z_1\pa_{z_1}\right)^{n+1}\left({1\over 2}\alpha(z_1)^2\right)
$$
where $C_{z_1}$ is a small loop around $z_1$. If there're two propagators, then it replaces each pair $\alpha^{(n)}, \alpha^{(m)}$ in $\mc J^{(k_1)}$ by
\begin{eqnarray*}
       && \int_{C_{z_1}}{dz_2\over 2\pi i z_2} \left(\sqrt{\hbar}z_1\pa_{z_1}\right)^{n+1} \left( {\sqrt\hbar z_2\over z_1-z_2} \right)\left(\sqrt{\hbar}z_1\pa_{z_1}\right)^{m+1} \left( {\sqrt\hbar z_2\over z_1-z_2} \right) \alpha(z_2)\\
       &=& \int_{C_{z_1}}{dz_2\over 2\pi i z_2} {(n+1)!(m+1)!\over (n+m+3)!}\left(\sqrt{\hbar}z_1\pa_{z_1}\right)^{n+m+3}\left( {\sqrt\hbar z_2\over z_1-z_2} \right)\alpha(z_2)+ \mbox{higher order in} \sqrt{\hbar}
\end{eqnarray*}
Therefore we find that the leading $\sqrt{\hbar}$ term in $
      \exp\left( \hbar P_0^\infty \right)\left(\int_{C_1}{dz_1\over 2\pi iz_1} \mc J^{(k_1)}\int_{C_2-C_2^\prime}{dz_2\over 2\pi i z_2} \mc J^{(1)}\right)
$ is given by
$$
  \int_{C_1}{dz_1\over 2\pi i z_1} E \mc J^{(k_1)}
$$
where $E$ is the operator
$$
   E={1\over 2}\sum_{k,l\geq 0}{(k+l)!\over k! l!}\alpha^{(k)}\alpha^{(l)} {\pa\over \pa \alpha^{(k+l-1)}} + \sum_{k,l\geq 0}{(k+1)!(l+1)!\over (k+l+3)!}\alpha^{(k+l+3)}{\pa\over \pa \alpha^{(k)}}{\pa\over \pa \alpha^{(l)}}
$$

By the commutative property, $E\mc J^{(k_1+1)}(\alpha)$ is a total derivative, i.e., lies in the image of $D$. The uniqueness now follows from the lemma below.
\end{proof}

\begin{lem}\label{existence}Consider the graded ring $A=\C[\alpha^{(0)},\alpha^{(1)},\cdots]/\im D$ with grading given by
$$
  \deg \alpha^{(k)}=k+1
$$
where $D$ is the operator of degree 1
$$
    D=\sum_{n\geq 0}\alpha^{(n+1)}{\pa\over \pa\alpha^{(n)}}
$$
Let $E$ be the operator of degree 2 acting on $A$
$$
   E={1\over 2}\sum_{k,l\geq 0}{(k+l)!\over k! l!}\alpha^{(k)}\alpha^{(l)} {\pa\over \pa \alpha^{(k+l-1)}} + \sum_{k,l\geq 0}{(k+1)!(l+1)!\over (k+l+3)!}\alpha^{(k+l+3)}{\pa\over \pa \alpha^{(k)}}{\pa\over \pa \alpha^{(l)}}
$$
There there exists unique $\mc J^{(k)}\in A$ of degree $k+1$ such that
$$
          E \mc J^{(k)}=0
$$
\end{lem}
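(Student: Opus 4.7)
\textbf{Plan of proof for Lemma \ref{existence}.} The plan is to split the claim into existence and uniqueness. A preliminary step is to verify $[E,D]=0$ on generators, so that $E$ descends to a well-defined operator on $A = \C[\alpha^{(0)},\alpha^{(1)},\ldots]/\im D$ raising degree by $2$; using the identification $E_1(\alpha^{(m)}) = \tfrac{1}{2}D^{m+1}(\alpha^{(0)2})$, this compatibility is immediate. The lemma is then the statement that $\ker(E\colon A_{k+2}\to A_{k+4})$ is one-dimensional, with $\mc J^{(k)}$ a generator normalized by its leading term $\tfrac{1}{(k+1)!}\alpha^{k+2}$.

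For existence, I would take the candidate $\mc J^{(k)}$ to be the lagrangian $\mc L^{(k)}$ of \eqref{definition-lagrangian}, re-expressed in the variables $\alpha^{(n)} = (\sqrt{\hbar}\,\partial_w)^n \alpha$. The identity $E\mc J^{(k)}=0$ is then the algebraic shadow of the commutativity property established for the $\C^*$-theory: restricting to inputs $\alpha \in \OO_{\C^*}$, the $z_2$-integral $\exp(\hbar\,\partial_{\PP_0^\infty})\bigl(\int_{C_1}\mc J^{(k_1)} \int_{C_2-C_2'}\mc J^{(k_2)}\bigr)$ computes a residue that vanishes because of the OPE of the bosonic vertex operators $:e^{\phi}:$, which in turn reflects the commutation of the fermionic currents $\mc E_0^{(k)}$ on $\HH_0^F$. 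The leading-order-in-$\hbar$ analysis already carried out in the proof of Lemma \ref{uniqueness}, when applied to $\mc L^{(k)}$ with $k_2=1$, identifies this vanishing as the statement $E\mc J^{(k)}\equiv 0$ in $A$.

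For uniqueness I would filter $A$ by the total-derivative weight $m(\alpha^{(i_1)}\cdots\alpha^{(i_r)}) = \sum_j i_j$. Since $E_1$ shifts $m$ by $+1$ and $E_2$ by $+3$, writing $\mc J = \sum_m \mc J_m$ turns $E\mc J=0$ into a strictly triangular tower: the bottom layer $\mc J_0 = c\,\alpha^{(0)^{k+2}}$ is one-dimensional and pinned by the normalization, and the $m$-th layer is forced by an equation of the form $E_1 \mc J_m = -(\text{contributions of } \mc J_0,\ldots,\mc J_{m-1})$. Existence of $\mc L^{(k)}$ already supplies a solution, so the consistency of the recursion is automatic; uniqueness reduces to showing that the ambiguity at each stage lies entirely in $\im D$. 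The hardest step — and the main obstacle — is precisely this cokernel analysis: one must show that the kernel of the layer-by-layer problem, viewed modulo total derivatives, contributes nothing new. I would attack this either by a secondary filtration by polynomial degree in $\alpha^{(0)}$ (reducing to a chain of finite-dimensional linear algebra problems whose ranks can be computed against the explicit $\mc L^{(k)}$), or by comparing directly with the generating series for $\mc L^{(k)}$ to verify that the recursive solution exhausts the kernel.
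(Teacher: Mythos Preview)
Your overall architecture matches the paper's: existence via the bosonized lagrangian $\mc L^{(k)}$ and the commutativity of the fermionic currents, and uniqueness via a triangular recursion coming from a secondary grading. Your $m$-weight $\sum_j i_j$ is complementary to the paper's ``homogeneous degree'' $r$ (number of factors), since $m+r$ equals the fixed total degree; so your recursion $E_1\mc J_m=-E_2\mc J_{m-2}$ is literally the same as the paper's $E_1 f_{i-2}=-E_2 f_i$.

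The gap is in closing the uniqueness argument. You correctly identify that the recursion determines each layer only up to $\ker(E_1\colon A\to A)$, and that this kernel computation is the crux. But neither of your two proposed attacks is workable as written. A filtration by $\alpha^{(0)}$-degree is not well-behaved on $A$: the relation $\im D$ mixes $\alpha^{(0)}$-degrees, and even upstairs the $k=0$ or $l=0$ terms of $E_1$ raise $\alpha^{(0)}$-degree, so the associated graded is no simpler than $E_1$ itself. ``Comparing with the generating series for $\mc L^{(k)}$'' supplies one solution but says nothing about injectivity; knowing a particular solution does not by itself bound the dimension of the solution space.

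The paper's missing idea is a two-step rewrite. First, split off the $\alpha^{(0)}$-terms: one checks directly that
\[
E_1 \;=\; E_1' \;+\; D\circ(\text{multiplication by }\alpha^{(0)}),
\qquad
E_1'=\tfrac{1}{2}\sum_{k,l>0}\tbinom{k+l}{k}\alpha^{(k)}\alpha^{(l)}\,{\pa\over\pa\alpha^{(k+l-1)}}-\alpha^{(1)},
\]
so on $A=\C[\alpha^{(\bullet)}]/\im D$ one may replace $E_1$ by $E_1'$, which involves no $\alpha^{(0)}$. Second, filter $A$ by the number of factors of $\alpha^{(1)}$. On the associated graded,
\[
E_1' \;\equiv\; \alpha^{(1)}\cdot\Bigl(\sum_{k\ge 2}(k+1)\,\alpha^{(k)}{\pa\over\pa\alpha^{(k)}}+\alpha^{(1)}{\pa\over\pa\alpha^{(1)}}-1\Bigr),
\]
i.e.\ multiplication by $\alpha^{(1)}$ composed with a diagonal scaling operator whose eigenvalue is strictly positive on any basis monomial containing some $\alpha^{(i)}$ with $i\ge 1$. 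Hence $\ker E_1'=\mathrm{Span}\{(\alpha^{(0)})^m\}_{m\ge 0}$ in $A$. At fixed total degree this kernel meets only the bottom (top-homogeneous-degree) layer of your recursion; on every other layer $E_1$ is injective and the tower has a unique solution. This injectivity statement is the heart of the lemma and is absent from your plan.
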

\begin{proof}Let $E=E_1+E_2$ where
\begin{eqnarray*}
 E_1&=&{1\over 2}\sum_{k,l\geq 0}{(k+l)!\over k! l!}\alpha^{(k)}\alpha^{(l)} {\pa\over \pa \alpha^{(k+l-1)}},\\
  E_2&=& \sum_{k,l\geq 0}{(k+1)!(l+1)!\over (k+l+3)!}\alpha^{(k+l+3)}{\pa\over \pa \alpha^{(k)}}{\pa\over \pa \alpha^{(l)}}
\end{eqnarray*}

It's easy to check that
$$
      [D, E_1]=[D, E_2]=0
$$

We write $E_1=E^\prime_1+ D\alpha^{(0)}$, where $D\alpha^{(0)}$ is the operator composed of multiplication by $\alpha^{(0)}$ and $D$, and
$$
    E^\prime_1={1\over 2}\sum_{k,l> 0}{(k+l)!\over k! l!}\alpha^{(k)}\alpha^{(l)} {\pa\over \pa \alpha^{(k+l-1)}} -\alpha^{(1)}
$$

We  can choose a basis of $A=\C[\alpha^{(0)},\alpha^{(1)},\cdots]/\im D$ as
$$
      \{\alpha^{(i_1)}\alpha^{(i_2)}\cdots (\alpha^{(i_k)})^2\}, \ \ \ \ 0\leq i_1\leq i_2\leq \cdots\leq i_k
$$
$E_1^\prime$ acts on the above basis in the obvious way, while for the action of $E_2$, we need to transform the result of the action to the above basis using the operator $D$.\\ \\
{\bf Claim} $\ker E^\prime_1=Span\{\left(\alpha^{(0)}\right)^k\}_{k\geq 0}$.\\

To prove the claim, we consider the filtration by the number of $\alpha^{(1)}$
$$
       F^p A= \left(\alpha^{(1)}\right)^p A, \
$$
then
$$
      E_1^\prime\equiv \alpha^{(1)}\left(\sum_{k\geq 2}(k+1)\alpha^{(k)}{\pa\over \pa \alpha^{(k)}}+\alpha^{(1)}{\pa\over \pa \alpha^{(1)}}-1\right)
      : Gr^p_{F^\bullet} A\to Gr^{p+1}_{F^\bullet}A
$$
where $\left(\sum\limits_{k\geq 2}(k+1)\alpha^{(k)}{\pa\over \pa \alpha^{(k)}}+\alpha^{(1)}{\pa\over \pa \alpha^{(1)}}-1\right)$ is a rescaling operator on $A$, which is positive on $\alpha^{(i_1)}\alpha^{(i_2)}\cdots (\alpha^{(i_k)})^2$ if $i_k\geq 1$. For $\left(\alpha^{(0)}\right)^k$, we have
$$
     E_1^\prime \left(\alpha^{(0)}\right)^k=-\left(\alpha^{(0)}\right)^k\alpha^{(1)}=-{1\over k+1}D\left(\alpha^{(0)}\right)^{k+1}
$$
which is zero in $A$. This proves the claim. \\

Let $A^{(k)}$ be the degree $k$ part of $A$ which is finite dimensional. We consider the second homogeneous grading on $A^{(k)}$ by giving all $\alpha^{(k)}$ homogeneous degree 1. Then $E^\prime_1$ is homogeneous of degree $1$ and $E_2$ is homogeneous of degree $-1$. Let $f\in A^{(k)}$ such that $Ef=0$. We decompose
$$
   f=\sum_{i=0}^k f_i
$$
where $f_k$  is homogeneous of degree $i$. Therefore we have
\begin{eqnarray*}
       &E_1f_k=0, E_1 f_{k-1}=0&\\ &E_2 f_i=-E_1 f_{i-2}&, 2\leq i \leq k
\end{eqnarray*}

It follows from the claim that $f_k$ is a multiple of $\left(\alpha^{(0)}\right)^k$ and all the other $f_i$'s are uniquely determined. This proves the uniqueness.

 To show the existence, we consider the lagrangian in Eqn (\ref{definition-lagrangian})
\begin{eqnarray*}
&&\int_C {dz\over 2\pi i z} \sum_{k\geq -1}\lambda^{k+1} \mc L^{(k)}(\alpha(z))\\
    &=&\int_{C}{dz\over 2\pi i z} \exp\left(S(\lambda z\pa_z)(\lambda \alpha(z)) \right), \ \ S(t)={\sinh t/2\over t/2}\\
    &=&\int_{C}{dz\over 2\pi i z} \exp\left(\left(e^{\lambda z\pa_z/2}-e^{-\lambda z\pa_z/2}\right)\phi(z)) \right), \ \ \alpha(z)=z\pa_z\phi(z)\\
    &=&\int_{C}{dz\over 2\pi i z} \exp\left(\phi(e^{\lambda/2}z)-\phi(e^{-\lambda/2}z)\right)\\
    &=&\int_C {dz\over 2\pi i z} \exp\left(\phi(e^\lambda z)-\phi(z)\right)\\
    &=&\int_C {dz\over 2\pi i z} e^{-\phi(z)}e^{\lambda z\pa_z} e^{\phi(z)}\\
    &=&\int_C {dz\over 2\pi i z} \suml_{k\geq 0}{\lambda^k \over k!} \bracket{z\pa_z+\alpha}^k \cdot 1
\end{eqnarray*}
where we use the convention that $\mc L^{(-1)}=1$. Now we view $\alpha(z)$ as the bosonic field of the free boson system described in section \ref{mirror-boson-fermion}. Since the normal ordered operator
$$
   {1\over S(\lambda)}\int_{C}{dz\over 2\pi i z} :\exp\left(S(\lambda z\pa_z)(\lambda \alpha(z)) \right):_{B}
$$
is the bosonization of the fermionic operator
$$
    \int_C {dz\over 2\pi i z} b(e^{\lambda/2}z)c(e^{-\lambda/2}z)
$$
which is already simultaneously diagonalized on the standard fermionic basis. It follows that
$$
       \int_C {dz\over 2\pi i z} :\mc L^{(k)}:_B
$$
are commuting operators on the bosonic Fock space, where the normal ordering relation is given by
$$
   \alpha(z_1)\alpha(z_2)= {z_1z_2\over (z_1-z_2)^2}+:\alpha(z_1)\alpha(z_2):_B \ \ \text{if}\ |z_1|>|z_2|
$$
If we rescale $\lambda\to \sqrt{\hbar} \lambda$, then
$$
   \int_C{dz\over 2\pi i z} :{1\over (k+1)!}(\sqrt{\hbar}z\pa_z+\alpha)^{k+1} 1:_B
$$
are commutating operators on bosonic Fock space if we impose the normal ordering relation
$$
    \alpha(z_1)\alpha(z_2)={\hbar z_1z_2\over (z_1-z_2)^2}+:\alpha(z_1)\alpha(z_2):_B \ \ \text{if}\ |z_1|>|z_2|
$$
This is precisely the commutativity property, i.e., we can take
$$
   \mc J^{(k)}={1\over (k+1)!}\bracket{D+\alpha^{(0)}}^{k+1}1 \in A
$$
 This proves the existence.
\end{proof}

\subsection{Proof of mirror symmetry}\label{mirror-proof}

In this section, we prove Theorem \ref{main theorem}.

\begin{proof}[Proof of Theorem \ref{main theorem}] By Corollary \ref{hbar stationary GW}
$$
   { \sum_{d\geq 0}q^d \hbar^g\left\langle \prod_{i=1}^n\tau_{k_i}(\tilde \omega)\right\rangle_{g,n,d}}=
\lim_{\bar\tau\to \infty}\lim_{\substack{\epsilon\to 0\\ L\to \infty}}W\left(\hbar P_\epsilon^L(\tau,\bar\tau); {1\over \hbar}\int_{C_1}dw  \mathcal L^{(k_1+1)},
 \cdots, {1\over \hbar}\int_{C_n}dw  \mathcal L^{(k_n+1)}\right)
$$
where $W$ is the summation of all connected Feynman diagrams with propagator $\hbar P_\epsilon^L(\tau, \bar\tau)$ and $n$ vertices given by
$$
\int_{C_i}dw  \mathcal L^{(k_i+1)}  , \ \ 1\leq i\leq n
$$
where $C_i$'s are cycles on the elliptic curve $\Etau$ representing the class $[0,1]$ and are chosen to be disjoint, and $\mc L^{(k)}$ is the local functional on $\PV^{0,0}_{\Etau}$ defined in Eqn (\ref{definition-lagrangian}).

On the other hand, we have
$$
 \lim_{\bar\tau\to \infty} F^{\Etau}_g[\infty]\left[t^{k_1}\omega,\cdots, t^{k_n}\omega\right]
= \lim_{\bar\tau\to\infty}\lim_{\substack{\epsilon\to 0\\ L\to \infty}}W\left({\hbar P^L_{\epsilon}(\tau;\bar\tau)};  {1\over \hbar}\int_{C_1}dw  \mathcal J^{(k_1)},
 \cdots, {1\over \hbar}\int_{C_n}dw  \mathcal J^{(k_n)}\right)
$$
where $\mc J^{(k)}=\sum\limits_{g\geq 0}\hbar^g \mc J^{(k)}_g$ are local lagrangians on $\PV^{0,0}_{\Etau}$ which contain only holomorphic derivatives. By lemma \ref{uniqueness} and the proof of existence in lemma \ref{existence},
$$
        \int_{C_i} dw  \mathcal J^{(k)}= \int_{C_i} dw  \mathcal L^{(k+1)}
$$
This proves the theorem.
\end{proof}


\bibliography{biblio}

\address{\tiny DEPARTMENT OF MATHEMATICS, NORTHWESTERN UNIVERSITY, 2033 SHERIDAN ROAD, EVANSTON IL 60208.} \\
\indent \footnotesize{\email{sili@math.northwestern.edu}}

\end{document}